\documentclass[12pt,a4paper,reqno]{amsart}
\usepackage[left=3truecm,right=2.5truecm,top=2.5truecm,bottom=2.5truecm]{geometry}
\usepackage{amssymb, mathrsfs, tikz, graphicx, bbm, cite, amsmath, mathtools, amsthm }

\newtheorem{lem}{Lemma}[section]

\newtheorem{thm}[lem]{Theorem}

\theoremstyle{remark}
\newtheorem{rem}{Remark}
\setlength{\parskip}{.75ex}

\newcommand*{\myDots}{\ifmmode\mathellipsis\else.\kern-0.07em.\kern-0.07em.\fi}

\DeclarePairedDelimiter\floor{\lfloor}{\rfloor}

\allowdisplaybreaks
\begin{document}
\title[Sums of product of  consecutive partial quotients]
{Limit theorems for sums of products of consecutive partial quotients of continued fractions}
\author{Hui Hu}
\address{School of Mathematics and Information Science, Nanchang Hangkong University, Nanchang, Jiangxi 330063, PR China\\
and
Department of Mathematics and Statistics,
La Trobe University,
Bendigo 3552,
Australia}
\email{hh5503@126.com}

\author{Mumtaz Hussain}
\address{Department of Mathematics and Statistics,
La Trobe University,
Bendigo 3552,
Australia}
\email{m.hussain@latrobe.edu.au}

\author{Yueli Yu}

\address{School of Mathematics and Statistics, Wuhan University, Wuhan, Hubei 430072, PR China}
\email{yuyueli@whu.edu.cn}

\begin{abstract}
Let $[a_1(x),a_2(x),\ldots, a_n(x), \ldots]$ be the continued fraction expansion of an irrational number $x\in (0, 1)$. The study of the growth rate of the product of consecutive partial quotients $a_n(x)a_{n+1}(x)$ is associated with the  improvements to Dirichlet's theorem (1842). We establish both the weak and strong laws of large numbers for the partial sums $S_n(x)= \sum_{i=1}^n a_i(x)a_{i+1}(x)$ as well as, from a multifractal analysis point of view,  investigate its increasing rate. Specifically, we prove the following results:
\medskip
\begin{itemize}

\item For any $\epsilon>0$, the Lebesgue measure of the set
$$\left\{x\in(0, 1): \left|\frac{ S_n(x)}{n\log^2 n}-\frac1{2\log2}\right|\geq \epsilon\right\}$$tends to zero as $n$ to infinity.

\item  For Lebesgue almost all $x\in (0,1)$,

$$\lim\limits_{n\rightarrow \infty} \frac{S_n(x)-\max\limits_{1\leq i \leq n}a_i(x)a_{i+1}(x)}{n\log^2n}=\frac{1}{2\log2}.$$

\item The Hausdorff dimension of the set $$E(\phi):=\left\{x\in(0,1):\lim\limits_{n\rightarrow \infty}\frac{S_n(x)}{\phi(n)}=1\right\}$$
is determined for a range of increasing functions $\phi: \mathbb N\to \mathbb R^+$.

\end{itemize}
\end{abstract}

\keywords { Continued fraction, partial quotient,  Hausdorff dimension}

\subjclass[2010]  {11K50, 28A80, 11K55, 11J70}   
\maketitle

\addtocounter{section}{0}

\section{Introduction}
Let $T: [0,1)\to [0,1)$ be the Gauss map defined by
\[T(0)=0, ~ T(x)=\frac{1}{x}-\floor*{\frac{1}{x}} \textmd{ for }x\in(0,1),\]
where $\lfloor \xi\rfloor$ denotes the integer part of $\xi$. Each irrational number $x\in(0,1)$ has a unique simple continued fraction expansion
as follows:
\begin{equation*}
x=\frac{1}{a_{1}(x)+\displaystyle{\frac{1}{a_{2}(x)+\displaystyle{\frac{1}{
a_{3}(x)+_{\ddots }}}}}}:=[a_{1}(x),a_{2}(x),a_{3}(x),\ldots ],
\end{equation*}
where $a_1(x)=\lfloor 1/x \rfloor$, $a_{n}(x)= a_1(T^{n-1}(x))$ for $n\ge2$ and the positive integer $a_n(x)$ is called the $n$th partial quotient of $x$.
There are various metrical results regarding the behaviour of the sum of partial quotients, $\mathcal S_n(x):=\sum_{i=1}^na_i(x)$,  of continued fractions. What may be classed as the first significant result  is attributed to  Khinchin \cite{Khinchin1935} who proved the following weak law of large numbers with the normalising function $n\mapsto~n\log n$. Throughout the paper, we will use $\lambda(A)$ to denote the Lebesgue measure of a set $A$.
\begin{thm}[Khinchin, 1935]\label{Kh1935}  For any $\epsilon>0$,
\begin{equation*}\label{Khi}
\lambda\left\{x\in(0, 1): \left|\frac{\mathcal S_n(x)}{n\log n}-\frac1{\log2}\right|\geq \epsilon\right\}\ \parbox{.75cm}{\rightarrowfill}   \ 0 \ \text{as} \ n \ \parbox{.75cm}{\rightarrowfill} \  \infty.
\end{equation*}
\end{thm}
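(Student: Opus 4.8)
The plan is to run the classical truncation method for weak laws of large numbers for heavy-tailed, weakly dependent random variables. It is convenient to replace $\lambda$ by the Gauss measure $\mu$, $d\mu=\frac{1}{\log 2}\frac{dx}{1+x}$, which is $T$-invariant and equivalent to $\lambda$ with $\lambda(A)\le 2\log 2\,\mu(A)$ for every measurable $A$; hence it suffices to prove the stated convergence with $\mu$ in place of $\lambda$. Under $\mu$ the sequence $(a_i)_{i\ge 1}$ is strictly stationary with the exact tail $\mu(a_1\ge k)=\log_2(1+1/k)$, and — this is the key structural input — it is $\psi$-mixing with summable (indeed exponentially decaying) mixing coefficients $\psi(m)$, a standard consequence of the spectral gap of the Gauss--Kuzmin--Wirsing transfer operator.

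Fix $\epsilon>0$ and put $B_n=n\log\log n$, $b_i=\min\{a_i,B_n\}$, $\widetilde S_n=\sum_{i=1}^n b_i$. First, the truncation is asymptotically harmless:
\[
\mu\bigl(\mathcal S_n\neq\widetilde S_n\bigr)\le\sum_{i=1}^n\mu(a_i>B_n)=n\,\mu(a_1>B_n)\le Cn/B_n\to 0.
\]
Next, writing $b_1=\sum_{k=1}^{B_n}\mathbf 1_{\{a_1\ge k\}}$ and using the telescoping identity $\sum_{k=1}^{B_n}\log_2(1+1/k)=\log_2(B_n+1)$, stationarity yields the exact mean $\mathbb E_\mu[\widetilde S_n]=n\log_2(B_n+1)$, so that $\mathbb E_\mu[\widetilde S_n]/(n\log n)\to 1/\log 2$ with error $|\mathbb E_\mu[\widetilde S_n]-\tfrac{n\log n}{\log 2}|=o(n\log n)$.

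The remaining task is to show $\operatorname{Var}_\mu(\widetilde S_n)=o\bigl((n\log n)^2\bigr)$, after which Chebyshev's inequality finishes the argument (combine it with the mean estimate and the truncation bound, then pass back to $\lambda$). The diagonal terms are handled via $b_1^2=\sum_{k=1}^{B_n}(2k-1)\mathbf 1_{\{a_1\ge k\}}$, giving $\operatorname{Var}_\mu(b_i)\le\mathbb E_\mu[b_1^2]=\sum_{k=1}^{B_n}(2k-1)\log_2(1+1/k)\le CB_n$, hence $\sum_{i=1}^n\operatorname{Var}_\mu(b_i)\le CnB_n=o\bigl((n\log n)^2\bigr)$. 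For the off-diagonal terms, $\psi$-mixing applied to the nonnegative single-coordinate functions $b_i,b_j$ gives
\[
|\operatorname{Cov}_\mu(b_i,b_j)|\le\psi(j-i)\,\mathbb E_\mu[b_i]\,\mathbb E_\mu[b_j]=\psi(j-i)\bigl(\log_2(B_n+1)\bigr)^2\le C(\log n)^2\psi(j-i),
\]
whence $\bigl|\sum_{1\le i<j\le n}\operatorname{Cov}_\mu(b_i,b_j)\bigr|\le C(\log n)^2\,n\sum_{m\ge 1}\psi(m)=o\bigl((n\log n)^2\bigr)$.

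The main obstacle is precisely this covariance bound. A cruder mixing estimate of sup-norm type, $|\operatorname{Cov}(f,g)|\le\phi(m)\|f\|_\infty\|g\|_\infty$, would be useless here, since $\|b_i\|_\infty=B_n\gg\mathbb E_\mu[b_i]$ and would yield a bound of order $nB_n^2\gg(n\log n)^2$; one genuinely needs the ``relative'' form $|\operatorname{Cov}_\mu(b_i,b_j)|\lesssim\psi(j-i)\,\mathbb E_\mu[b_i]\,\mathbb E_\mu[b_j]$ together with $\sum_m\psi(m)<\infty$. This $\psi$-mixing of partial quotients is classical for continued fractions but is the one non-elementary ingredient; everything else — the choice of $B_n$, the telescoping computation of the mean, and the layer-cake evaluation of the second moment — is routine.
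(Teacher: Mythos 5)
Your proof is correct: the truncation at $B_n$, the exact layer-cake computation of the truncated mean, the variance bound via summable $\psi$-mixing of the partial quotients (which is precisely the content of the paper's Lemma \ref{mixing-estimate}), and Chebyshev together with the equivalence $\lambda\le 2\log 2\,\mu$ give the stated convergence, with only trivial housekeeping (e.g.\ taking $B_n=\lfloor n\log\log n\rfloor$) left implicit. The paper itself only cites Khinchin's theorem without proof, but your argument is essentially the same truncation--mean--variance--Chebyshev scheme the paper carries out for its analogous Theorem \ref{convergent in measure} on $S_n(x)=\sum a_i(x)a_{i+1}(x)$, so there is nothing to flag.
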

Khinchin's theorem illustrates that $\mathcal S_n(x)/(n\log n)$ converges, in Lebesgue measure $\lambda$, to $\frac{1}{\log2}$. In this paper, the measure implied by the statement `almost everywhere' will always be the Lebesgue measure.

For the strong law of large numbers, Philipp \cite{Philipp88} proved that there is no reasonably regular function $\phi:\mathbb N\to\mathbb R_+$ such that
$\mathcal S_n(x)/\phi(n)$ almost everywhere converges to a finite nonzero constant. However,  if the largest partial quotient $a_k(x)$ is removed from $\sum_{i=1}^na_n(x)$ then Diamond \& Vaaler \cite{DiamondVaaler} showed that the following strong law of large numbers holds with the normalising function $n\mapsto n\log n$.
\begin{thm}[Diamond--Vaaler, 1986]\label{DV}
  For almost every $x\in (0,1)$,
  \[\lim\limits_{n\to\infty}\frac{\mathcal S_n(x)-\max\limits_{1\le i\le n}a_i(x)}{n\log n}=\frac{1}{\log 2}.\]
\end{thm}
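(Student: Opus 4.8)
The plan is to transfer the problem to the Gauss measure $\mu$, the $T$-invariant probability with density $(\log 2)^{-1}(1+x)^{-1}$: since $\mu$ and $\lambda$ are mutually absolutely continuous with densities bounded above and below, ``$\mu$-a.e.'' and ``$\lambda$-a.e.'' agree, while under $\mu$ the sequence $(a_i)_{i\ge1}$ is strictly stationary and exponentially $\psi$-mixing (a classical consequence of the spectral gap of the Gauss--Kuzmin--Wirsing operator, which I would simply quote). Fix $\gamma=\tfrac34$, set the truncation level $N_n:=n(\log n)^{\gamma}$, and introduce the truncated sum $\mathcal S_n^{\ast}:=\sum_{i=1}^n a_i\,\mathbf 1_{\{a_i\le N_n\}}$. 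From $\mu(a_1=k)=\tfrac1{\log2}\bigl(k^{-2}+O(k^{-3})\bigr)$ one obtains $\mathbb E_\mu[a_1\mathbf 1_{\{a_1\le m\}}]=\tfrac1{\log2}(\log m+O(1))$, $\mathbb E_\mu[a_1^2\mathbf 1_{\{a_1\le m\}}]=O(m)$ and $\mu(a_1>m)\asymp m^{-1}$; combining these with summability of the $\psi$-mixing coefficients (to bound the covariances by a geometric series) gives
\[
\mathbb E_\mu[\mathcal S_n^{\ast}]=\frac{n}{\log2}\bigl(\log N_n+O(1)\bigr)=\frac{n\log n}{\log2}\,(1+o(1)),\qquad \mathrm{Var}_\mu(\mathcal S_n^{\ast})=O(nN_n).
\]

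Next I would prove the strong law for $\mathcal S_n^{\ast}$ along a sparse subsequence and fill in the gaps. Take $n_k:=\lfloor e^{k^{\beta}}\rfloor$ with $\beta=\tfrac9{10}$; then $\mathrm{Var}_\mu(\mathcal S_{n_k}^{\ast})/(n_k\log n_k)^2=O\bigl((\log n_k)^{\gamma-2}\bigr)=O(k^{-\beta(2-\gamma)})$ with $\beta(2-\gamma)>1$, so Chebyshev's inequality and Borel--Cantelli yield $\mathcal S_{n_k}^{\ast}/(n_k\log n_k)\to\tfrac1{\log2}$ $\mu$-a.e. Since $n\mapsto\mathcal S_n^{\ast}$ is non-decreasing (increasing $n$ both adds a term and relaxes the truncation) and $\beta<1$ forces $n_{k+1}/n_k\to1$ and $\log n_{k+1}/\log n_k\to1$, a sandwich between consecutive subsequence values upgrades this to $\mathcal S_n^{\ast}/(n\log n)\to\tfrac1{\log2}$ $\mu$-a.e.

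It then remains to pass from $\mathcal S_n^{\ast}$ to $\mathcal S_n-\max_{1\le i\le n}a_i$. Let $a_1^{(n)}\ge a_2^{(n)}\ge\cdots$ be the decreasing rearrangement of $a_1,\dots,a_n$. The lower bound holds for every $n$: $\mathcal S_n-\max_{1\le i\le n}a_i=\sum_{j\ge2}a_j^{(n)}\ge\sum_{j\ge2}a_j^{(n)}\mathbf 1_{\{a_j^{(n)}\le N_n\}}=\mathcal S_n^{\ast}-a_1^{(n)}\mathbf 1_{\{a_1^{(n)}\le N_n\}}\ge\mathcal S_n^{\ast}-N_n$, and $N_n=o(n\log n)$. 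For the upper bound put $M_n:=\#\{i\le n:a_i>N_n\}$; from $\mu(a_1>N_n)\asymp N_n^{-1}$ and $\psi$-mixing one gets $\mu(M_n\ge2)\le C\,n^2N_n^{-2}=O\bigl((\log n)^{-2\gamma}\bigr)=O(k^{-2\gamma\beta})$ along $n_k$, with $2\gamma\beta>1$; Borel--Cantelli and a monotone fill-in (as $N_n$ is increasing, $M_n\le\#\{i\le n_{k+1}:a_i>N_{n_k}\}$ for $n_k\le n<n_{k+1}$) show that $M_n\le1$ for all large $n$, $\mu$-a.e. On that event the unique index with $a_i>N_n$, if any, realizes the maximum, so $\sum_{j\ge2}a_j^{(n)}=\sum_{j\ge2}a_j^{(n)}\mathbf 1_{\{a_j^{(n)}\le N_n\}}\le\mathcal S_n^{\ast}$. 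Together with the second paragraph this gives $\bigl(\mathcal S_n-\max_{1\le i\le n}a_i\bigr)/(n\log n)\to\tfrac1{\log2}$ $\mu$-a.e., hence $\lambda$-a.e.

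The delicate point, which I expect to be the main obstacle, is the borderline tail: $\mu(a_1>t)\asymp t^{-1}$ forces $\mathrm{Var}_\mu(\mathcal S_n^{\ast})$ to be only $O(nN_n)=(n\log n)^2/(\log n)^{2-\gamma}$, far too large for a naive Borel--Cantelli argument along a geometric subsequence. One must therefore tune three things at once: the truncation exponent must satisfy $\gamma<1$, the two Borel--Cantelli steps require $\beta(2-\gamma)>1$ and $2\gamma\beta>1$, and the sandwiching fill-in still needs $\beta<1$ (the choice $\gamma=\tfrac34$, $\beta=\tfrac9{10}$ meets all of these). Everything else---the moment asymptotics for $a_1$, and the exponential $\psi$-mixing of $(a_i)$ under $\mu$---is classical, and I would cite rather than reprove it.
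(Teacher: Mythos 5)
Your proposal is correct, and it follows essentially the same strategy the paper itself employs (adapted from Diamond--Vaaler and Philipp) to prove its product analogue, Theorem \ref{main thm-measure}: truncation at a level $n(\log n)^{\gamma}$ with $\gamma<1$ (the paper uses $N(\log N)^{3/2+\epsilon}$ for $b_i=a_ia_{i+1}$), moment and variance estimates via the tail of the relevant observable plus exponential mixing, Chebyshev and Borel--Cantelli along a subsequence $n_k=\lfloor e^{k^{\beta}}\rfloor$ with $\beta<1$ followed by a monotone sandwich, and an ``at most one large term'' lemma to replace the truncated sum by $\mathcal S_n-\max_{1\le i\le n}a_i$; note the paper only cites Diamond--Vaaler for this particular statement rather than proving it. The sole small point to tidy is that your fill-in for $M_n$ requires applying Borel--Cantelli to the events $\{\#\{i\le n_{k+1}:a_i>N_{n_k}\}\ge2\}$ rather than to $\{M_{n_k}\ge2\}$, which your mixing bound covers with the same summable estimate since $n_{k+1}/n_k\to1$.
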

To further analyse the behaviour of the sum $\mathcal S_n(x)$, in particular its increasing rate,  a focus has been on the Hausdorff dimension, $\dim_\mathcal H$,  of related level sets
\[B(\phi)=\left\{x\in(0,1): \lim\limits_{n\to\infty}\frac{\mathcal S_n(x)}{\phi(n)}=1\right\}.\]
Here and throughout the paper, the function $\phi: \mathbb{N}\to \mathbb{R}_+$ is monotonically increasing and $\lim_{n\to\infty}\phi(n)=\infty$.
There are several results regarding the Hausdorff dimension of $B(\phi)$ for different functions $\phi$. It was proved by Xu \cite{Xu08} that
\begin{equation*}
\dim_{\mathcal H} B(\phi)=
\left\{
             \begin{array}{lll}
             \frac{1}{2} & {\rm if} & \phi(n)=e^n,\\ [2ex]
             \frac{1}{b+1} & {\rm if} & \phi(n)=e^{b^n},b>1.
             \end{array}
\right.
\end{equation*}
The proof in \cite{Xu08} also implies that $\dim_{\mathcal H} B(\phi)=1/2$ if $\phi(n)=e^{n^{\gamma}}$ for any $\gamma\ge1$. Wu and Xu \cite{WuXu} proved that $\dim_{\mathcal H} B(\phi)=1$ for some general function $\phi$. In particular, if $\phi(n)=e^{n^{\gamma}}$ with $0<\gamma<1/2$, then $\dim_{\mathcal H} B(\phi)=1$. While the case $\phi(n)=e^{n^{\gamma}}$ with $1/2\le \gamma<1$ was recently solved by Liao and Rams \cite{LiaoRams}. They proved that $\dim_{\mathcal H} B(\phi)=1/2$ in this case. Thus for $\phi(n)=e^{n^{\gamma}}$ and $0<\gamma<\infty$, the dimension function $\dim_{\mathcal H} B(\phi)$ is discontinuous at $\gamma=1/2$. Finally, Iommi and Jordan \cite{Iommi-Jordan} investigated the case $\phi(n)=cn$ with $c\ge1$. We refer the reader to  \cite[pp. 403]{LiaoRams} for a graph of Hausdorff dimension of $B(\phi)$ for different values of $\phi$. The graph illustrates an interesting phenomena that the functional $\phi\mapsto \dim_\mathcal H B(\phi)$ is increasing for small values of $\phi$ and decreasing for large values of $\phi$.

In this paper, we are interested in the analogues of the results stated above by replacing the sum of partial quotients with the sum of products of consecutive partial quotients.  This consideration is motivated by the recent developments in the theory of uniform Diophantine approximation, specifically to the set of real numbers admitting improvements to Dirichlet's theorem. Let $\varphi$ be a monotonically non-increasing function. The set $\mathcal D(\varphi)$ of $\varphi$-Dirichlet improvable numbers  is the set of all $x \in \mathbb{R}$ such that
\[|qx-p|< \varphi(t), ~ 1\le |q|< t\]
has a nonzero integer solution $(p, q)$ for all large enough $t$. The set  $\mathcal D(\varphi)$ has an elegant characterisation in terms of growth of product of consecutive partial quotients as
   \begin{align*}&\left\{x\in(0,1): a_n(x)a_{n+1}(x)>\widetilde{\varphi}(q_n(x))\textmd{ for i.m. }n\in\mathbb N\right\} \subset \mathcal{D}^c(\varphi) \\ &\subset \left\{x\in(0,1): a_n(x)a_{n+1}(x)\geq \widetilde{\varphi}(q_n(x))/4 \textmd{ for i.m. }n\in\mathbb N\right\}
   \end{align*}
where $\widetilde{\varphi}(r)=\frac{r\varphi(r)}{1-r\varphi(r)},$ $p_n(x)/q_n(x)=[a_1(x),a_2(x),\ldots,a_n(x)]$ is the $n$-th convergent of the continued fraction expansion of $x$,  and `i.m.' stands for infinitely many.  The Lebesgue measure of $\mathcal{D}^c(\varphi)$ has been determined in \cite{KleinbockWadleigh} and the Hausdorff measure and dimension have been obtained in \cite{HKWW, BHS}. The study of comparisons of the set of Dirichlet non-improvable numbers with that of the set of well-approximable numbers was carried out in \cite{BBH1, BBH2}, and the study of level sets about the growth rate of $\{a_n(x)a_{n+1}(x): n\geq 1\}$ relative to that of $\{q_n(x): n\geq 1\}$ was discussed in \cite{HuWu}.  In particular, to get the Lebesgue measure of $\mathcal D^c(\varphi)$, Kleinbock and Wadleigh \cite{KleinbockWadleigh} obtained the Lebesgue measure of the set
\begin{equation*}
G(\Phi):=\left\{x\in(0,1): a_n(x)a_{n+1}(x)>\Phi(n)\textmd{ for i.m.  }n \in \mathbb N\right\}
\end{equation*}
and then  as a corollary, they deduced the Lebesgue measure of $\mathcal D^c(\varphi)$. See also \cite{HuWuXu} for a detailed analysis of a generalised version of the set $G(\Phi)$.
\begin{thm}[{\cite[Theorem 3.6]{KleinbockWadleigh}}]\label{K-W-Borel-Bernstain}
  Let $\Phi:\mathbb{N}\to[1,\infty)$ be a function with $\lim_{n\to\infty}\Phi(n)=\infty$.
   Then the Lebesgue measure of $G(\Phi)$
   is  either zero or full according as the series ${\sum_{n=1}^{\infty }}\log\Phi(n)/\Phi (n)$ converges or diverges respectively.
\end{thm}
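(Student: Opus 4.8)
The plan is to work throughout with the Gauss measure $\mu$, whose density $\frac{1}{\log 2}\cdot\frac{1}{1+x}$ is bounded between $\frac{1}{2\log2}$ and $\frac{1}{\log2}$ on $(0,1)$, so that $\mu$ and $\lambda$ are comparable; recall also that $\mu$ is $T$-invariant and that the Gauss map $T$ is exact. Write $E_n=\{x\in(0,1):a_n(x)a_{n+1}(x)>\Phi(n)\}$, so that $G(\Phi)=\limsup_{n\to\infty}E_n$. The argument rests on two inputs: a uniform two-sided estimate for $\mu(E_n)$, and — only in the divergence case — a quasi-independence estimate for $\mu(E_s\cap E_t)$.

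First I would estimate the measure of a single $E_n$. The $2$-cylinder $I(k,\ell)=\{x:a_1(x)=k,\,a_2(x)=\ell\}$ has length $\big(q_2(q_2+q_1)\big)^{-1}\asymp(k\ell)^{-2}$, where $q_1=k$ and $q_2=k\ell+1$, so
\[
\lambda\{x:a_1(x)a_2(x)>t\}=\sum_{k\ell>t}\lambda\big(I(k,\ell)\big)\asymp\sum_{1\le k\le t}\frac{1}{k^2}\cdot\frac{k}{t}+\sum_{k>t}\frac{1}{k^2}\asymp\frac{\log t}{t}\qquad(t\ge2).
\]
Since $E_n=T^{-(n-1)}\{x:a_1(x)a_2(x)>\Phi(n)\}$ and $\mu$ is $T$-invariant, $\mu(E_n)=\mu\{x:a_1(x)a_2(x)>\Phi(n)\}\asymp\log\Phi(n)/\Phi(n)$ for all large $n$, with implied constants independent of $n$. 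In the convergent case this gives $\sum_n\mu(E_n)\asymp\sum_n\log\Phi(n)/\Phi(n)<\infty$, so the first Borel--Cantelli lemma yields $\mu\big(\limsup_nE_n\big)=0$ and hence $\lambda(G(\Phi))=0$.

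For the divergent case, where $\sum_n\mu(E_n)=\infty$, I would deduce $\lambda(G(\Phi))=1$ in two steps. The first is a second-moment estimate. Each $E_s$ is a union of cylinders of the single depth $s+1$, and on each such cylinder $J$ the map $T^{s+1}$ is a bijection onto $(0,1)$ with uniformly bounded distortion, so $\lambda\big(J\cap T^{-(s+1)}B\big)\asymp\lambda(J)\lambda(B)$ for every Borel $B$; summing over the cylinders constituting $E_s$ gives $\lambda\big(E_s\cap T^{-(s+1)}B\big)\asymp\lambda(E_s)\lambda(B)$. For $t\ge s+2$ one has $E_t=T^{-(s+1)}\{x:a_{t-s-1}(x)a_{t-s}(x)>\Phi(t)\}$, and the set in braces has $\lambda$-measure $\asymp\lambda(E_t)$ by the first step together with $T$-invariance; hence $\lambda(E_s\cap E_t)\ll\lambda(E_s)\lambda(E_t)$. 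For the adjacent pairs $t=s+1$ I would bound trivially $\lambda(E_s\cap E_{s+1})\le\lambda(E_s)$. Writing $S_N=\sum_{s\le N}\lambda(E_s)\to\infty$, these bounds combine to give $\sum_{1\le s,t\le N}\lambda(E_s\cap E_t)\ll S_N^2$, so the Kochen--Stone (Chung--Erd\H{o}s) lemma gives $\lambda\big(\limsup_nE_n\big)>0$. The second step is a zero--one law: $G(\Phi)=\limsup_nE_n$ lies in the tail $\sigma$-algebra $\bigcap_{N\ge1}\sigma(a_N,a_{N+1},\dots)=\bigcap_{N\ge1}T^{-N}\mathcal B$ of the digit sequence, where $\mathcal B$ is the Borel $\sigma$-algebra; this is $\mu$-trivial because $T$ is exact, so $\lambda(G(\Phi))>0$ forces $\lambda(G(\Phi))=1$.

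The main obstacle is the quasi-independence estimate in the divergent case: one has to treat each $E_n$ as an infinite union of cylinders, secure the distortion bound $\lambda\big(J\cap T^{-(s+1)}B\big)\asymp\lambda(J)\lambda(B)$ uniformly over all cylinders $J$ and all Borel $B$ (a Gauss--Kuzmin--L\'evy type input), and separately handle the adjacent pairs $t=s+1$; once these are in place, the rest reduces to standard invocations of Borel--Cantelli, the Kochen--Stone inequality, and exactness of $T$. One could instead pass to the subsequence of even indices so that the surviving events depend on disjoint blocks of digits, but this refinement is not needed.
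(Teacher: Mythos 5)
Your argument is correct, but note first that the paper does not prove this theorem at all: it is imported from Kleinbock--Wadleigh, and the only ingredient the paper actually reproves is your opening estimate $\lambda\{x:a_1(x)a_2(x)>t\}\asymp \log t/t$, which appears (with the sharp constant $\tfrac{1}{\log 2}$) as Lemma \ref{measure of a_1a_2}. Your convergence half is the first Borel--Cantelli lemma exactly as in the source; your divergence half is sound but reaches full measure by a slightly different route than the standard one. The usual argument (and the one this paper's toolkit suggests) uses the exponential mixing of the Gauss measure, Lemma \ref{mixing-estimate}, which for $t\ge s+2$ gives $\mu(E_s\cap E_t)=\mu(E_s)\mu(E_t)\left(1+O(\rho^{t-s-2})\right)$; combined with your $O(S_N)$ bound for the diagonal and adjacent pairs, this makes the Kochen--Stone ratio tend to $1$, so one gets measure one directly and no zero--one law is needed. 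You instead accept a crude distortion constant $C$ in $\lambda(E_s\cap E_t)\le C\,\lambda(E_s)\lambda(E_t)$, obtain $\lambda(\limsup_n E_n)\ge 1/C$, and then upgrade via tail-triviality; that step is legitimate, since $G(\Phi)\in\sigma(a_{N+1},a_{N+2},\dots)$ for every $N$ and the Gauss map is exact, though exactness is a classical fact you must quote from outside both papers. The trade-off is that your version needs only bounded distortion of the inverse branches plus exactness, while the mixing version needs the sharper input of Lemma \ref{mixing-estimate} but stays entirely within quantitative Borel--Cantelli. The single-event estimate, the identification $E_t=T^{-(s+1)}\{x:a_{t-s-1}(x)a_{t-s}(x)>\Phi(t)\}$ for $t\ge s+2$, and your separate handling of the pairs $t=s+1$ are all correct as stated.
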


It is thus clear that the study of growth of product of consecutive partial quotients is pivotal in providing information on the set of Dirichlet non-improvable numbers. In this paper, we study the sum of product of consecutive partial quotients. Throughout this paper, we specify
\[S_n(x):= \sum_{i=1}^n a_i(x)a_{i+1}(x)\]
for $n\ge1$ and irrational $x\in(0,1)$.  Our first result  (akin to Khinchin's Theorem \ref{Kh1935}) shows that $S_n(x)$ converges, in Lebesgue measure,  to $1/(2\log2)$ with the normalising function $n\mapsto n\log^2 n$.  Hence we have the following weak law of large numbers for the sum $S_n(x)$.

 \begin{thm}\label{convergent in measure}For any $\epsilon>0$,
\begin{equation*}\label{Khi}
\lambda\left\{x\in(0, 1): \left|\frac{ S_n(x)}{n\log^2 n}-\frac1{2\log2}\right|\geq \epsilon\right\}\ \parbox{.75cm}{\rightarrowfill}   \ 0 \ \text{as} \ n \ \parbox{.75cm}{\rightarrowfill} \  \infty.
\end{equation*}
\end{thm}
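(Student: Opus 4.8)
The plan is to run a truncation argument followed by a Chebyshev (second–moment) estimate, carried out with respect to the Gauss measure $\mu$, $d\mu=\tfrac{1}{\log 2}\tfrac{dx}{1+x}$, rather than $\lambda$. This is harmless for the statement, since $\tfrac{1}{2\log 2}\le\tfrac{d\mu}{d\lambda}\le\tfrac{1}{\log 2}$, so convergence in $\mu$-measure and in $\lambda$-measure are equivalent; the advantage is that $\mu$ is $T$-invariant, so under $\mu$ all the variables $a_i(x)a_{i+1}(x)$, $i\ge 1$, have the \emph{same} distribution, namely that of $a_1a_2$. The engine of the proof is the tail asymptotic
\[ \mu\bigl(\{x: a_1(x)\,a_2(x)\ge t\}\bigr)=\frac{\log t}{t\log 2}\bigl(1+o(1)\bigr)\qquad (t\to\infty), \]
together with the two–sided bound $\mu(a_1a_2\ge t)\asymp\tfrac{\log t}{t}$ valid for all $t\ge 2$ (compare Theorem~\ref{K-W-Borel-Bernstain}). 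One obtains this from the exact formula for $\mu(a_1=k,a_2=j)$ — equivalently, from $\mu(a_1=k)=\tfrac{1}{\log 2}\log\tfrac{(k+1)^2}{k(k+2)}$ and the conditional law of $Tx$ on $\{a_1=k\}$ — via $\mu(a_1a_2\ge t)=\sum_k\mu(a_1=k)\,\mu(a_2\ge t/k\mid a_1=k)$, in which the terms with $k\le t$ dominate and contribute $\tfrac{1}{t\log 2}\sum_{k\le t}\tfrac1k\sim\tfrac{\log t}{t\log 2}$. Only a single factor $\tfrac{1}{\log 2}$ appears because all the cylinders in play sit near $0$, where the Gauss density equals $\tfrac{1}{\log 2}$.

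Next we fix the truncation level $M_n=n(\log n)^2$ (any $M_n=n(\log n)^C$ with $1<C<3$ works), and set $Y_i:=a_i(x)a_{i+1}(x)\,\mathbf 1_{\{a_i(x)a_{i+1}(x)\le M_n\}}$ and $\widetilde S_n:=\sum_{i=1}^n Y_i$. By the crude tail bound and $T$-invariance, $\mu(S_n\ne\widetilde S_n)\le\sum_{i=1}^n\mu(a_ia_{i+1}>M_n)=n\,\mu(a_1a_2>M_n)\asymp\tfrac{n\log M_n}{M_n}\to 0$, so it suffices to prove the theorem with $S_n$ replaced by $\widetilde S_n$. The key computation is the mean: using $T$-invariance, the elementary identity $\mathbb E_\mu[Z\mathbf 1_{\{Z\le M\}}]=\sum_{t=1}^{M}\mu(Z\ge t)-M\,\mu(Z>M)$ for the integer variable $Z=a_1a_2$, and then the sharp tail asymptotic (noting $\log M_n\sim\log n$ and $\int_1^{M}\tfrac{\log t}{t}\,dt=\tfrac{(\log M)^2}{2}$),
\[ \mathbb E_\mu[\widetilde S_n]=n\,\mathbb E_\mu\bigl[a_1a_2\,\mathbf 1_{\{a_1a_2\le M_n\}}\bigr]\sim\frac{n}{\log 2}\sum_{t=1}^{M_n}\frac{\log t}{t}\sim\frac{n(\log n)^2}{2\log 2}. \]
Hence $\mathbb E_\mu[\widetilde S_n]/(n\log^2 n)\to\tfrac{1}{2\log 2}$.

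It remains to show $\operatorname{Var}_\mu(\widetilde S_n)=o\bigl(n^2(\log n)^4\bigr)=o\bigl((\mathbb E_\mu\widetilde S_n)^2\bigr)$, for which only crude upper bounds are needed. Split $\operatorname{Var}_\mu(\widetilde S_n)=\sum_i\operatorname{Var}_\mu(Y_i)+\sum_{|i-k|=1}\operatorname{Cov}_\mu(Y_i,Y_k)+\sum_{|i-k|\ge 2}\operatorname{Cov}_\mu(Y_i,Y_k)$. On the diagonal, $\mathbb E_\mu[Y_i^2]=\mathbb E_\mu[(a_1a_2)^2\mathbf 1_{\{a_1a_2\le M_n\}}]\ll\#\{(k,j)\in\mathbb Z_{\ge 1}^2:kj\le M_n\}\ll M_n\log M_n$, so $\sum_i\mathbb E_\mu[Y_i^2]\ll nM_n\log M_n\asymp n^2(\log n)^3$; this is in fact the dominant term of the variance and is $o(n^2(\log n)^4)$ — this is exactly why we needed $C<3$. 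For $|i-k|\ge 2$ the digit-blocks $\{a_i,a_{i+1}\}$ and $\{a_k,a_{k+1}\}$ are separated, so the classical $\psi$-mixing of the partial-quotient sequence, with exponentially — hence summably — decaying coefficient $\psi$, gives $|\operatorname{Cov}_\mu(Y_i,Y_k)|\le\psi(|i-k|-1)\,\mathbb E_\mu Y_i\,\mathbb E_\mu Y_k$; since $\mathbb E_\mu Y_i\ll(\log n)^2$, these sum to $O\bigl(n(\log n)^4\bigr)$. The one delicate contribution is the adjacent pairs, where the shared digit is squared: in law $Y_iY_{i+1}=a_1a_2^2a_3\,\mathbf 1_{\{a_1a_2\le M_n\}}\mathbf 1_{\{a_2a_3\le M_n\}}$, and the trivial bound $Y_iY_{i+1}\le M_nY_{i+1}$ only gives $n^2(\log n)^4$, which is not $o$ of itself. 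Instead one conditions on $a_2=m$ and uses the bounded distortion of the Gauss map to compare the conditional laws of $a_1$ and of $a_3$ with Lebesgue measure, obtaining $\mathbb E_\mu[a_1a_3\,\mathbf 1_{\{a_1\le M_n/m\}}\mathbf 1_{\{a_3\le M_n/m\}}\mid a_2=m]\ll(\log(M_n/m))^2$, whence $\mathbb E_\mu[Y_iY_{i+1}]\ll\sum_{m\le M_n}(\log(M_n/m))^2\ll M_n$ (the sum dominated by $m$ near $M_n$); thus $\sum_{|i-k|=1}|\operatorname{Cov}_\mu(Y_i,Y_k)|\ll nM_n+n(\log n)^4\asymp n^2(\log n)^2$. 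Adding the three parts, $\operatorname{Var}_\mu(\widetilde S_n)\ll n^2(\log n)^3=o\bigl(n^2(\log n)^4\bigr)$.

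Putting everything together: by Chebyshev's inequality, $\mu\bigl(|\widetilde S_n-\mathbb E_\mu\widetilde S_n|\ge\eta\,n\log^2 n\bigr)\le\operatorname{Var}_\mu(\widetilde S_n)/(\eta^2 n^2\log^4 n)\to 0$ for every $\eta>0$; combining this with $\mathbb E_\mu[\widetilde S_n]\sim\tfrac{n(\log n)^2}{2\log 2}$ and with $\mu(S_n\ne\widetilde S_n)\to 0$ yields $\mu\bigl(|S_n/(n\log^2 n)-\tfrac{1}{2\log 2}|\ge\epsilon\bigr)\to 0$ for every $\epsilon>0$, and hence the same statement with $\lambda$ in place of $\mu$, which is the assertion. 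The main obstacle is the variance estimate — specifically the adjacent-pair covariances, where the naive bound must be improved by a factor $(\log n)^2$, and the correct choice of truncation window $n\ll M_n\ll n(\log n)^{3}$ that simultaneously kills the truncation error and keeps $\sum_i\mathbb E_\mu[Y_i^2]$ below $(\mathbb E_\mu\widetilde S_n)^2$; the sharp mean, with its two constants $\tfrac12$ and $\tfrac{1}{\log 2}$, is the other point requiring precise handling.
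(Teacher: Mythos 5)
Your proposal is correct and follows essentially the same route as the paper: a truncation at level $\asymp n\log^2 n$, the tail estimate $\mu(a_1a_2\ge t)=\frac{\log t+O(1)}{t\log 2}$ (the paper's Lemma \ref{measure of a_1a_2}) to compute the mean, and a second-moment/Chebyshev argument whose covariance analysis splits into the same three cases (diagonal, adjacent pairs handled via the joint law of $(a_1,a_2,a_3)$, and separated pairs handled by exponential mixing). The only cosmetic differences are that the paper truncates at $\epsilon N\log^2 N$ and recycles the expectation and variance bounds from its proof of Theorem \ref{main thm-measure}, whereas you carry out the same estimates self-contained with a threshold independent of $\epsilon$; the substance is identical.
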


\begin{rem}
Note\footnote{We are thankful to an anonymous referee for suggesting this remark} that, in view of this theorem,  the sum $S_n(x)$ grows faster than any linear increasing speed. One of the reasons for this is that the function $x\mapsto a_n(x)a_{n+1}(x)$ is not in $L^1$. Since $S_n(x)$ is an ergodic sum of this function, the problem can be viewed as a problem in infinite ergodic theory, namely the study of the growth of an ergodic sum $S_n\Psi(x): = \sum_{k=1}^n \Psi(T^k(x))$ for some observable $\Psi:[0, 1]\to \mathbb R$ which is not in $L^1$. A result of Aaronson \cite{Aaronson} states that there is no normalisation $b_n$ such that $S_n(x)/b_n$ converges to a non-zero number for almost all $x$. However, in many cases,  it is possible to prove that for almost every  $x$,  $b_n\leq S_n\Psi(x)\leq c_n$ holds for sufficiently large $n$, where $b_n$ and $c_n$ grows almost at the same speed  \cite{GHPZ}.

Within this general setting, by trimming the sum $S_n(x)$, that is by removing one or several of its largest terms, then it is possible to normalise the sum so that it converges to a non-zero number for almost every $x$.  There are for instance related work by Kesseb\"ohmer-Schindler \cite{KS1, KS2}  which proves such results in a rather general setting.
\end{rem}
In contrast, however, note that by Theorem \ref{K-W-Borel-Bernstain},  for almost every $x$ the inequality $$a_{n}(x)a_{n+1}(x)\ge n\log^2n\log\log n$$
 holds for infinitely many $n$. Thus, it demonstrates that $S_n(x)/(n\log^2n)$ does not converge to $\frac{1}{2\log 2}$ almost everywhere. Similar to  Philipp \cite{Philipp88}, it can be straightforwardly proved that there is no reasonably regular function $\phi$ such that
$S_n(x)/\phi(n)$ almost everywhere converges to a finite nonzero constant. However, if we remove the largest term from $S_n(x)$ then an analogue of Diamond-Vaaler Theorem \ref{DV} for the sum $S_n(x)$ holds.
\begin{thm}\label{main thm-measure}
For almost every $x\in (0,1)$, we have
$$\lim\limits_{n\rightarrow \infty}\frac{S_n(x)-\max\limits_{1\leq i\leq n}a_i(x)a_{i+1}(x)}{n\log^2 n}=\frac{1}{2\log 2}.$$
\end{thm}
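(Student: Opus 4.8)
The plan is to follow the Diamond–Vaaler strategy, adapted to the product terms $b_i(x) := a_i(x)a_{i+1}(x)$. Write $S_n(x) = \sum_{i=1}^n b_i(x)$ and let $M_n(x) = \max_{1\le i\le n} b_i(x)$. The key point is to replace each term $b_i(x)$ by a truncated version and control both the truncation error and the contribution of the extreme terms. Specifically, fix a slowly growing threshold, say $\psi(n) := n\log^2 n\,(\log\log n)^{-1}$ or a similar choice, and set the truncated sum
\[
\widetilde S_n(x) := \sum_{i=1}^n b_i(x)\,\mathbbm 1_{\{b_i(x)\le \psi(n)\}}.
\]
First I would show, using the quasi-independence of partial quotients under the Gauss measure together with a strong-law argument (a Gál–Koksma type variance estimate, or a direct second-moment computation exploiting that the Gauss measure of $\{a_i = s, a_{i+1}=t\}$ is comparable to $1/(st)^2$ and that consecutive blocks decouple), that
\[
\lim_{n\to\infty}\frac{\widetilde S_n(x)}{n\log^2 n} = \frac{1}{2\log 2}\qquad\text{for a.e. }x.
\]
The constant arises because $\mathbb E_\mu[b_1\,\mathbbm 1_{\{b_1\le N\}}]$ grows like $\tfrac{1}{\log 2}\log^2 N \sim \tfrac{1}{\log 2}\log^2 n$ when $N\asymp\psi(n)$, and Cesàro averaging then produces the factor $\tfrac12$.

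Next I would estimate the discrepancy $S_n(x) - \widetilde S_n(x) = \sum_{i=1}^n b_i(x)\,\mathbbm 1_{\{b_i(x)>\psi(n)\}}$. By a Borel–Cantelli argument (using Theorem \ref{K-W-Borel-Bernstain}, or directly the measure estimate $\mu\{b_i > t\}\asymp \log t / t$), for a.e. $x$ and all large $n$ there is at most one index $i\le n$ with $b_i(x) > \psi(n)$, and removing the single largest term handles exactly that index. One must be careful: the threshold $\psi$ depends on $n$, so the "exceptional index" can change with $n$; the standard fix is to run the Borel–Cantelli argument along a geometric subsequence $n_k = 2^k$ and interpolate, using monotonicity of $S_n$ and the fact that $n\log^2 n$ is regularly varying so that $n_{k+1}\log^2 n_{k+1} \sim n_k\log^2 n_k$ along the subsequence up to a bounded factor, which can be absorbed by a further dyadic refinement. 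Combining the three ingredients,
\[
\frac{S_n(x) - M_n(x)}{n\log^2 n} = \frac{\widetilde S_n(x)}{n\log^2 n} + \frac{\bigl(S_n(x)-\widetilde S_n(x)\bigr) - M_n(x)}{n\log^2 n},
\]
and the second fraction tends to $0$ a.e. because its numerator is, for large $n$, either $0$ (when $M_n(x)\le\psi(n)$, in which case $S_n = \widetilde S_n$ and $M_n/(n\log^2 n)\to 0$) or equals a sum of terms each at most $\psi(n)=o(n\log^2 n)$ after the single largest one is deleted.

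The main obstacle I anticipate is the variance estimate for $\widetilde S_n$: unlike the Diamond–Vaaler setting where one sums single partial quotients, here consecutive terms $b_i = a_ia_{i+1}$ and $b_{i+1} = a_{i+1}a_{i+2}$ share the partial quotient $a_{i+1}$, so they are genuinely correlated and a naive independence heuristic fails. I would handle this by splitting the sum into, say, two subsums over indices of each parity (so that within each subsum consecutive $b_i$'s involve disjoint index-pairs and the mixing property of the Gauss map gives near-independence with exponentially decaying error), bounding $\mathrm{Var}(\widetilde S_n)$ by $O\bigl(n\,\mathbb E[b_1^2\,\mathbbm 1_{\{b_1\le\psi(n)\}}]\bigr) = O\bigl(n\psi(n)\log\psi(n)\bigr)$, which is $o\bigl((n\log^2 n)^2\bigr)$ provided $\psi(n) = o(n\log^2 n / \log n)$; this dictates the precise admissible choice of the truncation level $\psi$. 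A secondary technical point is verifying that the truncated means $\mathbb E_\mu[b_i\,\mathbbm 1_{\{b_i\le\psi(n)\}}]$ are, up to lower-order terms, independent of $i$ and asymptotic to $\tfrac{1}{\log 2}\log^2\psi(n)\sim\tfrac{1}{\log 2}\log^2 n$, which follows from the explicit two-dimensional cylinder measures together with the elementary asymptotic $\sum_{st\le N}\frac{1}{st}\sim\tfrac12\log^2 N$.
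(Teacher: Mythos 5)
Your plan follows the paper's strategy almost exactly: truncate $b_i$ at a level $\psi(n)$, prove a strong law for the truncated sum $\widetilde S_n$ via a second-moment estimate and Borel--Cantelli along a subsequence, handle the correlation of adjacent terms $b_i, b_{i+1}$ by a separate covariance estimate, and invoke the ``at most one large term'' fact to identify $S_n - \widetilde S_n$ with the deleted maximum. The parity split you suggest to deal with the overlap $b_i=a_ia_{i+1}$, $b_{i+1}=a_{i+1}a_{i+2}$ is a serviceable variant of what the paper does (the paper instead bounds the $|m-n|=1$ covariance directly, showing $\int b_1^*b_2^*\,d\mu\ll\varphi(N)$, which is cleaner and avoids having to argue that the two parity subsums each satisfy a strong law); either route works once the right variance bound $\mathrm{Var}(\widetilde S_n)\ll n\,\psi(n)\log\psi(n)$ is in hand.

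There is, however, a genuine gap in the interpolation step. You propose running Borel--Cantelli along $n_k=2^k$ and assert that the discrepancy can be ``absorbed by a further dyadic refinement.'' Along $n_k=2^k$ the normalisation ratio $n_{k+1}\log^2 n_{k+1}/(n_k\log^2 n_k)\to 2$, not $1$, so interpolation via monotonicity of $S_n$ only yields $\frac{1}{2}\cdot\frac{1}{2\log2}\le\liminf\le\limsup\le 2\cdot\frac{1}{2\log 2}$ --- the factor $2$ is not absorbed by passing to $\lfloor 2^{k/m}\rfloor$ for any fixed $m$ either, since the ratio becomes $2^{1/m}\ne 1$. To recover the exact constant you need a subsequence whose ratio tends to $1$ while the variance series still converges; the paper uses $c(k)=\lfloor e^{k^{1-\epsilon}}\rfloor$, for which $\log c(k)\sim k^{1-\epsilon}$ so that $\sum_k (\log c(k))^{-3/2+\epsilon}<\infty$ and simultaneously $c(k+1)/c(k)\to 1$. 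You would need to replace your geometric subsequence by something of this type (or run the $\delta$-geometric subsequence argument for every $\delta>0$ and let $\delta\to 0$, which amounts to the same thing) for the interpolation to give the claimed limit. A second, more minor issue: your initial truncation level $\psi(n)=n\log^2 n/\log\log n$ violates the constraint $\psi(n)=o(n\log^2 n/\log n)$ you yourself state later; you in fact need $\psi(n)$ roughly of size $n(\log n)^{3/2+\epsilon}$ for small $\epsilon>0$ --- large enough that the ``at most one exceptional index'' lemma holds (which forces the exponent $>3/2$), yet small enough that the variance bound feeds a convergent series along the chosen subsequence.
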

%
We further analyse the fractal structure of $S_n(x)$ with respect to an increasing function $\phi$ by considering the set
\[E(\phi):=\left\{x\in(0,1):\lim\limits_{n\to\infty}\frac{S_n(x)}{\phi(n)}=1\right\}.\]
We get the following three results about the Hausdorff dimension of $E(\phi)$.

\begin{thm}\label{thm-dimension-1}
  Let $\phi:\mathbb{N}\to\mathbb{R}_+$ be a monotonically increasing positive function with
  \[\lim\limits_{n\to\infty}\frac{\phi(n)}{n}=\infty, \lim\limits_{n\to\infty}\frac{\phi(n+1)}{\phi(n)}=1, \limsup\limits_{n\to\infty}\frac{\log \log \phi(n)}{\log n}<1/2.\]
  Then we have $\dim_{\mathcal H} E(\phi)=1$.
\end{thm}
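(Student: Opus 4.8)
The bound $\dim_{\mathcal H}E(\phi)\le 1$ is automatic, so the whole content is a lower bound. The plan is to fix a large integer $M$, build a Cantor-type subset $F_M\subseteq E(\phi)$ whose Hausdorff dimension is at least $s_M:=\dim_{\mathcal H}\{x\in(0,1):a_n(x)\le M\text{ for all }n\ge1\}$, and then let $M\to\infty$, invoking the classical fact that $s_M\to1$. Note that $\{x:a_n(x)\le M\ \forall n\}$ itself is not contained in $E(\phi)$, since there $S_n(x)\le M^2n=o(\phi(n))$; one therefore has to insert, along a sparse sequence of positions, a few large partial quotients that push the ergodic sum up to the level $\phi(n)$ while — this being the crux — leaving the dimension essentially unchanged.

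Construction of $F_M$. Pick $\eta_k\downarrow 0$ tending to $0$ slowly, e.g.\ $\eta_k=1/\log k$. Using the hypothesis $\phi(n+1)/\phi(n)\to1$, define the jump positions recursively: fix $n_0$ and let $n_k$ be the least integer $>n_{k-1}$ with $\phi(n_k)\ge(1+\eta_k)\phi(n_{k-1})$. Slow variation then forces $\phi(n_{k+1})/\phi(n_k)\to1$, and $\phi$ changes by a factor tending to $1$ over each block $[n_k,n_{k+1}]$. Put $A_k:=\lceil(\phi(n_k)-\phi(n_{k-1}))/2\rceil\ge1$, and let $F_M$ be the set of irrational $x\in(0,1)$ with $a_{n_k}(x)=A_k$ and $a_{n_k-1}(x)=a_{n_k+1}(x)=1$ for every $k$ (the $n_k$ are far apart, so there is no conflict), and $a_i(x)\le M$ for every remaining index $i$.

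$F_M\subseteq E(\phi)$ and $\dim_{\mathcal H}F_M\ge s_M$. For $x\in F_M$, separating in $S_n(x)=\sum_{i\le n}a_i(x)a_{i+1}(x)$ the terms that involve a jump index from those that do not gives, for $n_K\le n<n_{K+1}$,
\[
S_n(x)=2\sum_{k:\,n_k\le n}A_k+O(A_{K+1})+E_n,\qquad 0\le E_n\le M^2n .
\]
Telescoping, $2\sum_{k\le K}A_k=\phi(n_K)-\phi(n_0)+O(K)$, and since $\phi(n)/n\to\infty$ we have $K=\#\{k:n_k\le n\}\le n=o(\phi(n))$, $E_n=o(\phi(n))$, $A_{K+1}=o(\phi(n_K))$, while $\phi(n)/\phi(n_K)\to1$; hence $S_n(x)/\phi(n)\to1$, i.e.\ $F_M\subseteq E(\phi)$. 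For the dimension, the third hypothesis yields $\log\phi(n)\le n^{\gamma}$ for some $\gamma<1/2$ and all large $n$, which together with $\sum_{k\le K}\log(1+\eta_k)\le\log\phi(n_K)\le N^\gamma$ (for $n_K\le N$) and the choice of $\eta_k$ gives $\#\{k:n_k\le N\}=o(N^{1/2})$, whence
\[
\sum_{k:\,n_k\le N}\log A_k\ \le\ \sum_{k:\,n_k\le N}\log\phi(n_k)\ \le\ \#\{k:n_k\le N\}\cdot N^{\gamma}\ =\ o(N).
\]
Using the standard two-sided comparison $q_iq'_{n-i}\le q_n\le 2q_iq'_{n-i}$ iterated over the $O(K)$ blocks, one gets $\log q_n(x)=\log q_n^{(M)}(x)+o(n)$, where $q_n^{(M)}$ is the convergent denominator computed after deleting the jump positions, and $\log q_n^{(M)}\asymp n$. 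Placing on $F_M$ a probability measure $\mu$ that on the non-jump coordinates imitates the equilibrium (Gibbs) measure of dimension $s_M$ for the subsystem $\{a_i\le M\}$ and is deterministic at the jump coordinates, the estimate $|I_n(a_1,\dots,a_n)|\asymp q_n^{-2}$ and a routine application of the mass distribution principle give
\[
\frac{\log\mu(I_n)}{\log|I_n|}=\frac{-2s_M\log q_n^{(M)}+o(n)}{-2\log q_n+O(1)}\ \longrightarrow\ s_M
\]
for every cylinder $I_n\subseteq F_M$, so $\dim_{\mathcal H}E(\phi)\ge\dim_{\mathcal H}F_M\ge s_M$. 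Letting $M\to\infty$ completes the proof.

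The main obstacle is choosing the jump positions $\{n_k\}$ to meet three competing requirements at once: they must be dense enough that $\phi$ hardly changes over any block $[n_k,n_{k+1}]$ (so that $S_n/\phi(n)\to1$ for \emph{every} $n$, not merely along the subsequence $\{n_k\}$), yet sparse enough, and with inserted values $A_k$ small enough, that $\sum_{n_k\le N}\log A_k=o(N)$. It is precisely in reconciling these that the threshold $\limsup_n\log\log\phi(n)/\log n<1/2$ is used, and it is what forces the answer $\dim_{\mathcal H}E(\phi)=1$; everything else is the standard continued-fraction toolkit (cylinder lengths, $q_n$-estimates, the mass distribution principle) together with the classical limit $s_M\to1$.
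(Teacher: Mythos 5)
Your construction of $F_M$ coincides, up to cosmetic choices, with the paper's $E(\phi,M)$: a sparse sequence of jump positions $n_k$ selected by an $\eta_k$-increment rule so that $\phi(n_k)/\phi(n_{k-1})\to1$; one large partial quotient $A_k\asymp\tfrac12\bigl(\phi(n_k)-\phi(n_{k-1})\bigr)$ inserted at $n_k$ and flanked by $1$'s; $a_i\le M$ elsewhere; and the same telescoping check that $F_M\subset E(\phi)$. (One small point: you should enforce a gap $n_k\ge n_{k-1}+4$ as the paper does — for $k$ in a finite initial range you could have $n_k=n_{k-1}+1$, which makes $a_{n_{k-1}+1}=1$ clash with $a_{n_k}=A_k$, and with $n_k\le n_{k-1}+3$ the deleted triples $\{n_k-1,n_k,n_k+1\}$ overlap, which matters for the bookkeeping.) Where you genuinely diverge is the dimension lower bound. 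You equip $F_M$ with a measure that is a Gibbs state of exponent $s_M$ on the bounded coordinates and deterministic at the jumps, then appeal to the mass distribution principle. The paper instead builds the coordinate-deletion bijection $f:E(\phi,M)\to E_M$, proves via the cylinder inequalities \eqref{fact 1}--\eqref{fact 2} and the estimate $\sum_{n_k\le N}\log a_{n_k}=o(N)$ (your $\sum\log A_k=o(N)$) that $f$ is $(1+\epsilon)^{-1}$-H\"older, and then quotes $\dim_{\mathcal H}E_M\to1$ as a black box. The H\"older-map route is shorter precisely because it outsources all the measure-building to the classical fact about $E_M$ and turns the gap estimates into a single Lipschitz inequality. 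Your route is self-contained and doesn't need $f$ to be a bijection, but the step you label ``routine'' hides exactly what the paper makes explicit: passing from cylinders to balls requires lower bounds on the gaps between adjacent cylinders of $F_M$ (e.g.\ inserting an interval $I(a_1,\dots,a_n,1)$ between them, as the paper does in its Section~5 lower-bound arguments), and the Gibbs comparison $\mu(I_n)\asymp (q_n^{(M)})^{-2s_M}$ only holds up to $M$-dependent constants, which must be swallowed by the $o(n)$ in the exponent. Both routes work; yours just needs that step written out.
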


It is probably worth emphasising that the conditions on $\phi$ are more general than simply stating that $\phi(n)=e^{n^\alpha}$ for $\alpha\in (0, 1/2)$.
\begin{thm}\label{thm-dimension-1/1+alpha}
  For any $\alpha>1$,  if $\phi(n)=e^{{\alpha}^n}$, then $\dim _\mathcal H E(\phi)=\frac{1}{1+\alpha}$.
\end{thm}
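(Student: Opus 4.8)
The strategy is to first pin down the growth of the individual partial quotients along points of $E(\phi)$, and then reduce the problem to a Good--Łuczak type dimension computation. Since $\phi(n-1)/\phi(n)=e^{-(\alpha-1)\alpha^{n-1}}$ tends to $0$ extremely fast, for $x\in E(\phi)$ we have $S_{n-1}(x)=\phi(n-1)(1+o(1))=o(\phi(n))$, and hence $a_n(x)a_{n+1}(x)=S_n(x)-S_{n-1}(x)=\phi(n)(1+o(1))$. Taking logarithms and setting $u_n=\log a_n(x)/\alpha^n\ge0$, this reads $u_n+\alpha u_{n+1}=1+o(1)$; as $t\mapsto(1-t)/\alpha$ is a contraction with fixed point $\tfrac1{1+\alpha}$ and the $u_n$ are bounded, it follows that
\[
\lim_{n\to\infty}\frac{\log a_n(x)}{\alpha^n}=\frac1{1+\alpha}\qquad\text{for every }x\in E(\phi).
\]

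For the upper bound I would invoke the classical theorem (Good; in the sharp form due to Łuczak, with a short proof of the lower bound by Feng--Wu--Liang--Tseng) that $\dim_{\mathcal H}\{x:a_n(x)\ge c^{\,b^n}\text{ for all }n\}=\tfrac1{1+b}$ for all $b,c>1$, the value being independent of $c$. The displayed limit gives, for every small $\varepsilon>0$, $E(\phi)\subseteq\bigcup_{n_0\ge1}\{x:a_n(x)\ge c_\varepsilon^{\alpha^n}\text{ for all }n\ge n_0\}$ with $c_\varepsilon=e^{(1-\varepsilon)/(1+\alpha)}>1$, and since passing from ``for all $n$'' to ``for all $n\ge n_0$'' and taking a countable union over $n_0$ leaves the dimension unchanged, we conclude $\dim_{\mathcal H}E(\phi)\le\tfrac1{1+\alpha}$. (Alternatively, the upper bound can be obtained by a direct covering of $E(\phi)$ by cylinders constrained by $a_n(x)a_{n+1}(x)\ge\tfrac12\phi(n)$, in the spirit of the Dirichlet non-improvable estimates in \cite{HKWW,BHS}.)

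For the lower bound I would fix $c=e^{1/(1+\alpha)}$, so that $c^{1+\alpha}=e$, and build, for a large $n_0$, the Cantor set
\[
\mathcal C=\bigl\{x\in(0,1):a_n(x)\in D_n\text{ for all }n\ge n_0\bigr\},\qquad D_n=\mathbb Z\cap\bigl[c^{\alpha^n},\,c^{\alpha^n}(1+1/n)\bigr],
\]
with $a_1,\dots,a_{n_0-1}$ fixed arbitrarily. For any admissible $x$ and any $n\ge n_0$ one has $c^{\alpha^n+\alpha^{n+1}}=c^{(1+\alpha)\alpha^n}=e^{\alpha^n}=\phi(n)$, hence $a_n(x)a_{n+1}(x)\in[\phi(n),\phi(n)(1+1/n)(1+1/(n+1))]$, so $a_n(x)a_{n+1}(x)=\phi(n)(1+o(1))$ uniformly on $\mathcal C$; together with $S_{n-1}(x)=o(\phi(n))$ this yields $S_n(x)/\phi(n)\to1$, i.e. $\mathcal C\subseteq E(\phi)$. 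It then remains to prove $\dim_{\mathcal H}\mathcal C\ge\tfrac1{1+\alpha}$. I would do this by putting the uniform Bernoulli measure $\mu$ on $\mathcal C$ (so $\mu(I_n)\asymp\prod_{i=n_0}^n|D_i|^{-1}$), using the standard estimates $q_{n-1}\asymp\prod_{i<n}a_i\asymp c^{\alpha^n/(\alpha-1)}$ and $|I_n(a_1,\dots,a_n)|\asymp(a_nq_{n-1})^{-2}$, and applying the mass distribution principle; alternatively one may quote an existing dimension formula for continued-fraction Cantor sets $\{x:a_n(x)\in[\psi(n),2\psi(n)]\ \forall n\}$ with rapidly increasing $\psi$, which here returns exactly $\tfrac1{1+\alpha}$ — the extra factor $1/n$ in $|D_n|$ being harmless, since $\log\prod_{i\le n}i=O(n\log n)$ is negligible next to $\log q_n\asymp\alpha^n$.

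The main obstacle is precisely the mass-distribution estimate at the intermediate scales $|I_n|\le r<|I_{n-1}|$: because consecutive cylinders differ in length by the huge factor $(c^{\alpha^{n+1}})^2$, the crude bound ``(number of level-$n$ cylinders met)$\times(\text{their length})^s$'' only gives $s\le\tfrac12$, so one must instead use the sharper count that a ball of radius $r$ meets $\lesssim r\,q_{n-1}^2\,(c^{\alpha^n})^2$ of the level-$n$ sub-cylinders of a fixed $I_{n-1}$, and then verify that $\mu(B(x,r))\lesssim r^s$ holds for all $s\le\tfrac1{1+\alpha}$. This is the usual technical heart of Good/Łuczak-type arguments; once the constants (all of which come from $c>1$) are tracked, the two bounds combine to give $\dim_{\mathcal H}E(\phi)=\tfrac1{1+\alpha}$.
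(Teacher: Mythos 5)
Your proposal is correct and, at bottom, follows the same route as the paper: the upper bound comes from {\L}uczak's theorem and the lower bound from a Cantor set on which $a_n(x)\approx e^{\alpha^n/(1+\alpha)}$ within a narrow window, which forces $S_n(x)\sim\phi(n)$. The differences are in execution. For the upper bound you first prove the pointwise asymptotics $\log a_n(x)/\alpha^n\to \frac{1}{1+\alpha}$ on $E(\phi)$ via the contraction $u_{n+1}=(1-u_n+o(1))/\alpha$ (this is correct, and stronger than needed), and then invoke the $c$-independent, ``for all $n\ge n_0$'' form of {\L}uczak with $b=\alpha$; the paper only uses $a_n(x)a_{n+1}(x)\ge \phi(n)/2$ eventually, includes $E(\phi)$ in $\{x: a_n(x)\ge e^{b^n}\textmd{ i.m.}\}$ for each $b<\alpha$, and applies the $c=e$ version (Lemma \ref{Luczak-estimate}), letting $b\to\alpha$. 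Either variant works, and in fact your containment $a_n(x)\ge c_\varepsilon^{\alpha^n}$ eventually implies $a_n(x)\ge e^{b^n}$ eventually for every $b<\alpha$, so reliance on the sharper form of {\L}uczak is avoidable. For the lower bound, your set $\mathcal C$ with $a_n\in[c^{\alpha^n},c^{\alpha^n}(1+1/n)]$, $c=e^{1/(1+\alpha)}$, is essentially the Cantor set of Lemma \ref{lower-bound-1/1+alpha}, where $d_nd_{n+1}=\phi(n)-\phi(n-1)$ yields $d_n=e^{\frac{\alpha^n}{1+\alpha}(1+o(1))}$ and the window is $[d_n,(1+1/\psi(n))d_n]$; your verification that $\mathcal C\subseteq E(\phi)$ is fine since the earlier terms contribute $o(\phi(n))$. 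The one place you stop short---checking $\mu(B(x,r))\lesssim r^s$ at intermediate scales, which you rightly identify as the crux because the children of an $(n-1)$-cylinder form a cluster of diameter about $(n\,c^{\alpha^n}q_{n-1}^2)^{-1}$ inside an interval of length about $q_{n-1}^{-2}$---is exactly what the paper outsources to Falconer's Example 4.6 (Lemma \ref{Falconer's lemma}), whose gap/count formula $\liminf_n \log(m_1\cdots m_{n-1})/(-\log(m_n\epsilon_n))$ evaluates to $\frac{1}{1+\alpha}$ here. Your exponent bookkeeping at that worst scale does return exactly $\frac{1}{1+\alpha}$, so the argument closes, but as written it is a sketch; the cleanest repair is to quote such a gap-based lemma (as the paper does) rather than redo the mass distribution estimate by hand.
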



\begin{thm}\label{thm-dimension-1/2}
  For any $\alpha\ge1/2$,  if $\phi(n)=e^{n^{\alpha}}$, then $\dim_{\mathcal H} E(\phi)=1/2$.
\end{thm}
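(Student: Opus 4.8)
\emph{Overview.} The plan is to prove $\dim_{\mathcal H}E(\phi)\ge\tfrac12$ and $\dim_{\mathcal H}E(\phi)\le\tfrac12$ separately, the second being the substantial one and running parallel to the estimates of Xu and of Liao--Rams for the sum $\sum_i a_i(x)$.

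\emph{Lower bound.} I would construct an explicit Cantor subset of $E(\phi)$. Put $g(i):=\lfloor\sqrt{\phi(i)-\phi(i-1)}\rfloor$, so $\log g(i)\sim\tfrac12 i^{\alpha}$, fix a sequence $\eta_i\downarrow0$ with $\eta_i g(i)\to\infty$ (e.g.\ $\eta_i=1/\log i$), and let $F$ be the set of $x$ with $a_i(x)\in[g(i),(1+\eta_i)g(i)]$ for all $i\ge N_0$. One first checks $F\subseteq E(\phi)$: on $F$ one has $a_i(x)a_{i+1}(x)=(\phi(i)-\phi(i-1))(1+o(1))$ uniformly (using $\lim_{i}\phi(i+1)/\phi(i)=1$ for $\alpha<1$ and the analogous estimate for $\alpha\ge1$), and since the increments $\phi(i)-\phi(i-1)$ put asymptotically all of their mass near $i=n$ while $\eta_i\to0$, a Cesàro-type estimate gives $S_n(x)=\phi(n)(1+o(1))$ uniformly on $F$. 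Equipping $F$ with the uniform product measure $\mu$ over the admissible digit windows, one computes its local dimension: $\log|I_n(x)|\asymp-2\sum_{i\le n}\log a_i(x)\sim-2\sum_{i\le n}\log g(i)$ while $-\log\mu(I_n(x))\asymp\sum_{i\le n}\log g(i)+\sum_{i\le n}\log(1/\eta_i)$, and since $\sum_{i\le n}\log(1/\eta_i)=o\bigl(\sum_{i\le n}\log g(i)\bigr)$ the local dimension of $\mu$ equals $\tfrac12$ at every point of $F$. The usual passage from cylinders to arbitrary balls then yields $\dim_{\mathcal H}F\ge\tfrac12$. The only mild point is that the windows contain at least two integers, which holds for large $i$ because $\eta_i g(i)\to\infty$.

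\emph{Upper bound.} Since $x\in E(\phi)$ forces $\tfrac12\phi(n)\le S_n(x)\le2\phi(n)$ for all large $n$, we have $E(\phi)\subseteq\bigcup_{N}F_N$ with $F_N:=\{x:\tfrac12\phi(n)\le S_n(x)\le2\phi(n)\text{ for all }n\ge N\}$, and it suffices to show $\dim_{\mathcal H}F_N\le\tfrac12$. The key structural remark: if $i_0$ maximises $a_i(x)a_{i+1}(x)$ over $i\le n$, then $a_{i_0}(x)a_{i_0+1}(x)\in[\phi(n)/(2n),2\phi(n)]$ while also $a_{i_0}(x)a_{i_0+1}(x)\le S_{i_0}(x)\le2\phi(i_0)$; comparing the two forces $i_0\ge n-Cn^{1-\alpha}\log n$ together with an index $j\in\{i_0,i_0+1\}$ satisfying $\log a_j(x)\ge\tfrac12 j^{\alpha}-O(\log j)$. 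Hence every point of $F_N$ carries a \emph{syndetic} set of large partial quotients, with gaps $O(m^{1-\alpha}\log m)$. One then covers $F_N$ by the cylinders $I_m(a_1,\dots,a_m)$ with $m$ ranging over these large positions, summing over all admissible syndetic configurations of large positions (trivially at most $2^m$ up to generation $m$) and over all digit values: each large digit contributes $\sum_{a\ge e^{j^{\alpha}/2}/\mathrm{poly}(j)}a^{-2s}\asymp e^{(1-2s)j^{\alpha}/2}\,\mathrm{poly}(j)$ and each remaining digit contributes the convergent sum $\le\zeta(2s)$ for $s>\tfrac12$. Multiplying, the resulting $s$-sum at generation $m$ is controlled by $2^m\,\zeta(2s)^m\exp\!\bigl(\tfrac{1-2s}{2}\sum_{\text{large }j\le m}j^{\alpha}\bigr)$; since $\sum_{\text{large }j\le m}j^{\alpha}$ is of order at least $m^{2\alpha}/\log m$, which is super-linear precisely when $\alpha>\tfrac12$, the exponential decay wins in that range and the cover gives $\dim_{\mathcal H}F_N\le s$ for every $s>\tfrac12$. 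For $\alpha\ge1$ one has, even more simply, that $S_n(x)=\phi(n)(1+o(1))$ rigidly forces $a_m(x)a_{m+1}(x)\asymp\phi(m)$ for all large $m$, so every second digit is large and the same cover applies.

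\emph{Main obstacle.} The endpoint $\alpha=\tfrac12$ is the crux. There $\sum_{\text{large }j\le m}j^{\alpha}$ is only of order $m/\log m$, so the large partial quotients no longer dominate the cylinder lengths, the decay factor no longer beats $\zeta(2s)^m$, and the crude cover above merely gives $\dim_{\mathcal H}F_N\le s^{\ast}$ for the root $s^{\ast}>\tfrac12$ of $\zeta(2s)e^{(1-2s)/4}=1$. Closing this gap requires using the full force of $S_n(x)=\phi(n)(1+o(1))$ rather than just $S_n(x)\asymp\phi(n)$ — for instance combining $\sum_{i\le n}a_i(x)\le S_n(x)\le2\phi(n)$ with the refined two-sided control on the partial sums — essentially reproducing the delicate covering/pressure argument of Liao--Rams for $\sum_i a_i(x)$ at the exponent $\tfrac12$; I expect this to be the technically heaviest step of the proof.
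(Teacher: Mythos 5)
Your lower bound is essentially the paper's (Lemma~\ref{lower-bound-1/2}): you pin $a_i(x)$ to a short window around $g(i)\approx\sqrt{\phi(i)-\phi(i-1)}=e^{\psi(i)/2(1+o(1))}$ and compute the local dimension of the natural product measure, while the paper defines the targets recursively by $d_nd_{n+1}=\phi(n)-\phi(n-1)$ (which matches the increment exactly even when $\phi(i+1)/\phi(i)\not\to1$, i.e.\ for $\alpha\ge1$) and invokes Falconer's mass-distribution example; this is a cosmetic difference. The genuine gap is the one you yourself flag: your upper-bound cover breaks at $\alpha=\tfrac12$, which is part of the theorem, and you leave that case open. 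For $\alpha>\tfrac12$ your syndetic-large-digit argument is a genuinely different route from the paper's and looks plausible, but at $\alpha=\tfrac12$ the cumulative weight $\sum_{\text{large }j\le m}j^{\alpha}\asymp m/\log m$ of the forced large digits is sublinear and cannot beat $\zeta(2s)^m$. This is not a defect to be ``delicately'' patched along the Liao--Rams template you gesture at: an argument that isolates a sparse set of big digits structurally cannot reach $\tfrac12$, because at that exponent a point of $E(\phi)$ may spread its $\phi$-mass over $\asymp\sqrt n$ digits at time $n$ with no single digit dominant.

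What the paper actually does for the upper bound (Lemma~\ref{upper-bound-1/2-0}) avoids large digits entirely. It chops $\mathbb N$ into blocks $(n_k,n_{k+1}]$ whose lengths are $\asymp\psi(n_{k+1})/M$ for a large constant $M$ (so $n_k\asymp k^2$, blocks of length $\asymp\sqrt{n_k}$, at $\alpha=\tfrac12$); from $S_n/\phi(n)\to1$ it extracts a two-sided \emph{additive} constraint $c_1e^{\psi(n_{k+1})}\le\sum_{j=n_k+2}^{n_{k+1}}a_ja_{j+1}\le c_2e^{\psi(n_{k+1})}$ on each block; it decouples overlapping products by restricting to one parity of $j$, sets $b_j=a_ja_{j+1}$ for those $j$, and uses the Liao--Rams bound $\sum_{i_1+\cdots+i_n=m}\prod_k i_k^{-2s}\le C^n m^{-2s}$ (Lemma~\ref{upper-estimate-product-sum}) together with the divisor estimate $\#\{(a,b):ab=b_j\}\ll_\epsilon b_j^\epsilon$ (Lemma~\ref{numbers-estimate}) to bound each block's cylinder-sum by $C'\exp\bigl(-(2s-1-\epsilon)\psi(n_{k+1})+\tfrac{n_{k+1}-n_k}{2}\log c_4\bigr)$; the block-length choice $n_{k+1}-n_k\lesssim\psi(n_{k+1})/M$ then absorbs the $c_4^{(n_{k+1}-n_k)/2}$ factor. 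The point is that the sum constraint on $(b_j)$, not the size of any individual $a_j$, drives the Hausdorff estimate, and this is precisely what closes the endpoint $\alpha=\tfrac12$. Replacing your $2^m\zeta(2s)^m$ bookkeeping with this block-plus-additive-lemma estimate is the missing ingredient.
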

%
%

\medskip

We illustrate a summary of these Hausdorff dimension results for different $\phi$ in the figure below.

\medskip

\medskip

\begin{center}
\fbox{
\begin{tikzpicture}[scale=2]
\draw[->, thick] (0.4, 0.075)--(6.25, 0.075);
\draw[->, thick] (0.5, -0.025)--(0.5, 2.5)node[above, yshift=0.25cm]{$\dim_\mathcal H E(\phi)$};
\draw[-] (0.45, 1)node[left]{\tiny$1/2$}--(0.55, 1);
\draw[-] (0.45, 2)node[left]{\tiny$1$}--(0.55, 2);

\draw[-] (2, .025)node[below]{\tiny$0$}--(2, 0.125);

\draw[-] (3, 0.025)node[below]{\tiny$1/2$}--(3, 0.125);

\draw[-] (4, 0.025)node[below]{\tiny$1$}--(4, 0.125);

\draw [fill=black] (3, 1) circle (0.05);
\draw [-](3,1)--(3.72, 1);

\draw [-] (3, 2) circle (0.05);

\draw [-](2.95,2)--(2.32, 2);

\node at (2.25, 2) {\myDots};
\node at (3.8, 1) {\myDots};

 \draw[xshift=4cm, domain=0.015:2.1] plot (\x,{1/(4*\x+1)});
\node at (5.5, .5){$\frac1{1+\alpha}$};

\node at (5.1, -.35){\text{\small superexponential} $e^{\alpha^n}$};

\node at (3.1, -.35){\text{\small subexponential} $e^{n^\alpha}$};

\node at (1.1, -.35){\text{\small linear} $n\alpha$};
\node at (1.1, 1){$???$};

\node at (3, -1) {\text{Figure: $\dim_\mathcal H E(\phi)$ for different $\phi$.}};
\end{tikzpicture}
}

\end{center}

\medskip
\medskip

We prove these results for a slightly more general setting  (see Lemmas \ref{lower-bound-1/2}, \ref{lower-bound-1/1+alpha} and \ref{upper-bound-1/2-0} below).
\begin{rem}Naturally one may wonder what happens when $\phi$ is a linear function such as $\phi(n)=cn$ for some constant $c$.
That is the multifractal analysis of the Birkhoff average $\frac{1}{n}\sum_{i=0}^{n-1} f(T^i(x))$ with the potential function $f(x)=a_1(x)a_2 (x)$. For an expanding map with infinite branches such as the Gauss map $T$, the known results, such as the multifractal analysis of $\mathcal S_n(x)$,  are all based on some regularity conditions of the potential function $f$. For further  details see the summable variation condition in \cite{Iommi-Jordan} and the variation uniformly converging to $0$ in the paper \cite{FanJoranLiaoRams}. Since the above regular conditions do not hold for $f(x)=a_1(x)a_2(x)$, the known conclusions and methods are not applicable in this case. Therefore, we believe some ingenuity and new arguments are needed to resolve this case.

\end{rem}

\begin{rem}
One may wonder, if the results stated above and their methods of proof extends to the sum $\sum_{i=1}^na_i(x)a_{i+1}(x)\cdots a_{i+k}(x)$ of the products of the $k+1$ consecutive partial quotients. We believe that with obvious modifications,  the results and methods of proofs will be similar. The calculations, however, will be a bit lengthy without yielding any new information.
\end{rem}

\medskip

The paper is organised as follows. In Section \ref{prepare}, we introduce some notation and discuss some basic properties of continued fraction and Gauss measure. In Section \ref{proof for measure}, we prove Theorems \ref{convergent in measure} and \ref{main thm-measure}. In Section \ref{proof-for-full-dimension}, we prove Theorem \ref{thm-dimension-1} and in Section \ref{proof-of-last-two} we prove Theorems \ref{thm-dimension-1/1+alpha} and \ref{thm-dimension-1/2}.
\medskip

\noindent {\bf Acknowledgements.}  
The first-named author was supported by the Natural Science Foundation of China (11701261) and the China Scholarship Council. The second-named author was supported by the Australian Research Council Discovery Project (200100994).

\section{Preliminaries}\label{prepare}
We list some basic properties of continued fractions. For more details of continued fractions, we refer to \cite{Khinchin_book64}. The definitions and properties of Hausdorff measure and dimension can be found in \cite{Falconer_book2003}.

For any $n\ge1$ and any positive integers $a_1,\cdots,a_n$, we call
\[I(a_1,\ldots,a_n):=\{x\in[0,1]: a_1(x)=a_1,\ldots,a_n(x)=a_n\}\]
an $n$-th order {\em cylinder}.
The cylinder $I(a_1,\ldots,a_n)$ is an interval and its length satisfies
 \begin{equation}\label{length of cylinders}
 |I(a_1,\ldots,a_n)|=q_n^{-1}(q_n+q_{n-1})^{-1}
 \end{equation}
where  $q_i, 1\le i\le n$  satisfy the recursive formula
\begin{equation}\label{recursive formulae of qn}
  q_{-1}=0, q_0=1, q_{i}=a_{i}q_{i-1}+q_{i-2}.
\end{equation}
Moreover, by \eqref{length of cylinders} and \eqref{recursive formulae of qn}, we have
\[\frac{1}{2q_n^2}\le |I(a_1,\ldots,a_n)|\le \frac{1}{q_n^2}\]
and
\begin{equation}\label{estimate of length of cylinder by partial quotients}
  2^{-(2n+1)}\prod\limits_{k=1}^n a_k^{-2}\le |I(a_1,\ldots,a_n)|\le \prod\limits_{k=1}^n a_k^{-2}.
\end{equation}
For any integer $k\ge1$, the first order cylinder $I(k)$ satisfies
\[I(k)=\left(\frac{1}{k+1},\frac{1}{k}\right].\]
For any integers $i,j\ge1$, the cylinder $I(i,j)$ satisfies
\[I(i,j)=\left[\frac{1}{i+\frac{1}{j}},\frac{1}{i+\frac{1}{j+1}}\right).\]
We will be calculating the gap between cylinders and to do that we shall use the following fact. Notice that  $I(a_1,\cdots,a_{n-1},a_n)$ and $I(a_1,\cdots,a_{n-1},a_n+1)$ are adjacent subintervals of $\mathbb{R}$.  If $n$ is odd (respectively even), then $I(a_1,\cdots,a_{n-1},a_n)$ is on the right (respectively left) side of $I(a_1,\cdots,a_{n-1},a_n+1)$ in $\mathbb{R}$, see \cite{Khinchin_book64}.

\medskip

Let $\mu$ be the Gauss measure on [0,1] defined by
\[\mu(A)=\frac{1}{\log2 }\int_A\frac{1}{1+x}dx\]
for any Lebesgue measurable set $A$. The Gauss measure $\mu$ is $T$-invariant and equivalent to the Lebesgue measure $\lambda$.
The following exponentially mixing property of Gauss measure is well known (see \cite{Billingsley} or \cite{Philipp88}).
\begin{lem}\label{mixing-estimate}
  There exists a constant $0<\rho<1$ such that
  $$\mu \left(I(a_1,a_2,\ldots,a_m)\cap T^{-m-n}B)\right)=\mu\left(I(a_1,a_2,\ldots,a_m)\right)\mu(B)(1+O({\rho}^n))$$
  for any $m\ge1, n\ge0$, any $m$-th cylinder $I(a_1,a_2,\ldots,a_m)$ and any Borel set $B$, where the implied constant in $O({\rho}^n)$ is absolute.
\end{lem}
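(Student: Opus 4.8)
The plan is to recognise the quantity on the left as the integral over $B$ of a single iterate of the transfer operator of the Gauss map applied to an explicit density with uniformly bounded distortion, and then to invoke the exponential form of the Gauss--Kuzmin theorem. Let $\mathcal L$ be the transfer operator of $T$ with respect to Lebesgue measure,
$$\mathcal Lf(x)=\sum_{k\ge1}\frac{1}{(k+x)^2}\,f\!\left(\frac{1}{k+x}\right),$$
so that $\int(\mathcal L^nf)\,\psi\,d\lambda=\int f\,(\psi\circ T^n)\,d\lambda$ for all $n\ge0$ and all bounded $\psi$, and write $h(x)=\frac{1}{\log2}\cdot\frac1{1+x}=\frac{d\mu}{d\lambda}$ for the Gauss density, which satisfies $\mathcal Lh=h$. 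The first step exploits the Möbius structure of $T^m$ on a cylinder: for $x\in I(a_1,\dots,a_m)$ one has $x=\frac{p_m+p_{m-1}t}{q_m+q_{m-1}t}$ with $t:=T^m(x)\in[0,1]$, and since $|p_mq_{m-1}-p_{m-1}q_m|=1$ the change of variables gives $dx=(q_m+q_{m-1}t)^{-2}\,dt$. As $T^{m+n}(x)=T^n(t)$, this yields
$$\mu\big(I(a_1,\dots,a_m)\cap T^{-m-n}B\big)=\int_{T^{-n}B}\phi_I(t)\,dt,\qquad \phi_I(t)=\frac{1}{\log2}\cdot\frac{1}{(q_m+q_{m-1}t)\big(q_m+p_m+(q_{m-1}+p_{m-1})t\big)}.$$
One checks directly that $\int_0^1\phi_I\,dt=\mu\big(I(a_1,\dots,a_m)\big)$ and that, because $q_{m-1}\le q_m$ and $p_k\le q_k$ for all $k$, the positive monotone $C^\infty$ function $\phi_I$ satisfies $\tfrac14\phi_I(0)\le\phi_I(t)\le\phi_I(0)$ on $[0,1]$; hence, as $I$ runs over all cylinders of all orders, $\phi_I/\mu(I)$ stays in a bounded subset of $\mathrm{BV}[0,1]$ (its sup-norm and total variation are both $\le4$).

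For the second step, apply the duality of $\mathcal L$ with $\psi=\mathbf 1_B$:
$$\mu\big(I(a_1,\dots,a_m)\cap T^{-m-n}B\big)=\int_{[0,1]}\phi_I(t)\,\mathbf 1_B(T^n t)\,dt=\int_B(\mathcal L^n\phi_I)(s)\,ds.$$
The Gauss--Kuzmin--Wirsing theorem provides constants $0<\rho<1$ and $C>0$ with $\big\|\mathcal L^nf-\big(\!\int f\,d\lambda\big)h\big\|_\infty\le C\rho^n\|f\|_{\mathrm{BV}}$ for every $f\in\mathrm{BV}[0,1]$; applying this with $f=\phi_I$, and using the uniform bound on $\|\phi_I/\mu(I)\|_{\mathrm{BV}}$ together with $h\ge\frac1{2\log2}$, gives $\mathcal L^n\phi_I(s)=\mu(I)\,h(s)\,\big(1+O(\rho^n)\big)$ with an absolute implied constant. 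Integrating over $B$ and using $\int_Bh\,d\lambda=\mu(B)$ together with $\lambda(B)\le2\log2\cdot\mu(B)$ yields $\mu\big(I(a_1,\dots,a_m)\cap T^{-m-n}B\big)=\mu(I)\mu(B)\big(1+O(\rho^n)\big)$, as claimed.

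The real content, and where essentially all the work lies if one does not simply cite it, is the exponential Gauss--Kuzmin--Wirsing estimate, i.e. the spectral gap of $\mathcal L$ on $\mathrm{BV}[0,1]$ (equivalently on $C^1$ or on the Lipschitz class). This is classical (Kuzmin, L\'evy, Wirsing; see also \cite{Billingsley, Philipp88}): Kuzmin's method shows that $\mathcal L$ maps the convex cone of densities with $-A\le(\log f)'\le B$ into itself for suitable absolute $A,B$ and contracts a projective metric on it, giving the exponential rate upon iteration; alternatively one runs the same recursion directly on the distribution functions $s\mapsto\lambda\big(I\cap T^{-m-n}[0,s]\big)/\lambda(I)$, which is close to the argument in \cite{Billingsley}. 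I would avoid first proving the statement for $\lambda$ in place of $\mu$ and then transferring it: the inequalities $\tfrac12\le(1+x)^{-1}\le1$ only show that the two mixing ratios differ by a bounded factor, i.e. give two-sided comparability rather than the asymptotic $1+O(\rho^n)$, so it is cleaner to carry the Gauss density $h$ through the transfer-operator computation from the outset, as above.
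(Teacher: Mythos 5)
Your proof is correct. Note that the paper does not prove this lemma at all: it records it as the well-known exponential $\psi$-mixing property of the Gauss measure and simply cites \cite{Billingsley} and \cite{Philipp88}. Your argument is essentially the classical one underlying those references: the change of variables $x=\frac{p_m+p_{m-1}t}{q_m+q_{m-1}t}$ with unit determinant correctly produces the conditional density $\phi_I$, the distortion bound $\frac14\phi_I(0)\le\phi_I\le\phi_I(0)$ (from $q_{m-1}\le q_m$, $p_{m-1}+q_{m-1}\le p_m+q_m$ and monotonicity of $\phi_I$) gives the uniform bound on $\|\phi_I/\mu(I)\|$ in sup-norm and variation, and the duality $\int_{T^{-n}B}\phi_I\,d\lambda=\int_B\mathcal L^n\phi_I\,d\lambda$ reduces everything to the exponential Gauss--Kuzmin--Wirsing estimate, after which $h\ge\frac{1}{2\log2}$ and $\lambda(B)\le 2\log2\,\mu(B)$ give the multiplicative error $1+O(\rho^n)$ with an absolute constant, exactly as claimed. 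The one ingredient you invoke rather than prove, the spectral gap of $\mathcal L$ on $\mathrm{BV}[0,1]$ (equivalently Kuzmin's cone/distribution-function recursion), is precisely the content of the sources the paper cites, so your treatment is at the same level of completeness as the paper's; your closing remark that one should carry the Gauss density through the computation, rather than prove the statement for $\lambda$ and compare measures afterwards, is also well taken, since the latter would only yield two-sided comparability rather than the $1+O(\rho^n)$ asymptotic.
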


%

\section{Proofs of Theorems  \ref{convergent in measure} and  \ref{main thm-measure} }\label{proof for measure}
We adopt  strategies from \cite{DiamondVaaler} and \cite{Philipp88} to prove Theorems  \ref{convergent in measure} and  \ref{main thm-measure}.   The proof of the following lemma is similar to that of \cite[Theorem 3.6]{KleinbockWadleigh}. Let $\varphi: \mathbb{N}\to [1,\infty)$ be a positive function.

\begin{lem}\label{measure of a_1a_2}
Let $A_n:=\{x\in(0,1): a_1(x)a_2(x)\geq \varphi(n)\}$ with $n\geq 1$. Then  the Gauss measure of $A_n$ satisfies
$$  \mu(A_n)= \frac{1}{\log2}\cdot \frac{\log\varphi(n)+O(1)}{\varphi(n)}.$$
\end{lem}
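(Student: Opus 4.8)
The set $A_n$ is a countable union of second-order cylinders $I(i,j)$ with $ij\ge\varphi(n)$, so the strategy is to estimate $\mu(A_n)$ by summing the lengths of these cylinders and then comparing the resulting double sum with an integral. Since the Gauss measure is comparable to Lebesgue measure up to the constant factor $\tfrac{1}{\log 2}$ and a bounded multiplicative error, it suffices to estimate $\lambda(A_n)=\sum_{ij\ge \varphi(n)}|I(i,j)|$. Using the explicit description $I(i,j)=\left[\tfrac{1}{i+1/j},\tfrac{1}{i+1/(j+1)}\right)$ one gets $|I(i,j)|=\tfrac{1}{(ij+1)(i(j+1)+1)}$, which is comparable to $\tfrac{1}{i^2j^2}$ (with the usual care when $i$ or $j$ equals $1$). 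Hence $\lambda(A_n)\asymp \sum_{ij\ge \varphi(n)} \tfrac{1}{i^2 j^2}$, and the whole problem reduces to the asymptotics of this double sum.

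First I would organise the sum by fixing $i$ and summing over $j\ge \varphi(n)/i$: the inner sum $\sum_{j\ge \max(1,\,\varphi(n)/i)} j^{-2}$ is comparable to $i/\varphi(n)$ when $i\le \varphi(n)$ (tail of a convergent $p$-series, $\sum_{j\ge N}j^{-2}\sim 1/N$) and is a bounded constant when $i>\varphi(n)$. Therefore
\[
\sum_{ij\ge\varphi(n)}\frac{1}{i^2j^2}\;\asymp\;\sum_{1\le i\le \varphi(n)}\frac{1}{i^2}\cdot\frac{i}{\varphi(n)}\;+\;\sum_{i>\varphi(n)}\frac{1}{i^2}\;=\;\frac{1}{\varphi(n)}\sum_{1\le i\le\varphi(n)}\frac1i\;+\;O\!\Big(\frac{1}{\varphi(n)}\Big).
\]
Since $\sum_{1\le i\le\varphi(n)}\tfrac1i=\log\varphi(n)+O(1)$, this already gives the order of magnitude $\tfrac{\log\varphi(n)+O(1)}{\varphi(n)}$. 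To pin down the exact leading constant $\tfrac{1}{\log 2}$, rather than just an order estimate, I would work with $\lambda$ directly and compare the double sum to the integral $\int\!\!\int_{xy\ge\varphi(n)}\tfrac{dx\,dy}{x^2y^2}$ over the region in $[1,\infty)^2$, or equivalently exploit that $\mu(A_n)=\mu\{a_1 a_2\ge\varphi(n)\}$ and use the invariance and mixing of $\mu$ together with the known one-variable estimate $\mu\{a_1\ge t\}=\tfrac{1}{\log2}\log\!\big(1+\tfrac1t\big)+\cdots$; more concretely one writes $A_n=\bigcup_i \{a_1=i,\ a_2\ge \varphi(n)/i\}$ and uses $\mu(I(i))\asymp \tfrac{1}{\log 2}\,i^{-2}$ and the conditional distribution of $a_2$ to recover the factor $\tfrac{1}{\log 2}$ cleanly. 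The additive $O(1)$ inside the logarithm absorbs all the boundary effects (the discrepancy between the sum and the integral, the small-$i$ and small-$j$ terms, and the $ij$ vs $(ij+1)(i(j+1)+1)$ discrepancy).

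The main obstacle is controlling the error terms sharply enough to land exactly on $\tfrac{1}{\log 2}\cdot\tfrac{\log\varphi(n)+O(1)}{\varphi(n)}$: one must show that the difference between the double sum $\sum_{ij\ge\varphi(n)} i^{-2}j^{-2}$ and its continuous analogue is $O(1/\varphi(n))$ uniformly in $n$, so that it gets swallowed into the $O(1)$ in the numerator. This is a routine but slightly delicate Euler–Maclaurin / dyadic-splitting estimate: the contribution of $i$ in a dyadic block $[2^k,2^{k+1})$ is $O(2^{-k})\cdot(\text{something bounded})$, and summing over the $O(\log\varphi(n))$ relevant blocks recovers the logarithm while the error across all blocks telescopes to $O(1/\varphi(n))$. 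Once the order of magnitude and the constant are matched via the integral comparison (which mirrors the computation in \cite[Theorem 3.6]{KleinbockWadleigh}), the lemma follows, and passing from $\lambda$ to $\mu$ costs only the harmless bounded density factor that is already accounted for in the $O(1)$.
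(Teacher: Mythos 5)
Your skeleton is essentially the paper's: decompose $A_n$ according to $a_1=i$, note that the part with $i\le\varphi(n)$ contributes about $\tfrac{1}{i\varphi(n)}$ per value of $i$ while the tail $i>\varphi(n)$ contributes $O(1/\varphi(n))$, and recover $\log\varphi(n)+O(1)$ from the harmonic sum by integral comparison (the paper does exactly this, with explicit upper and lower sandwiching integrals). The genuine gap is in how you justify the leading constant $\tfrac{1}{\log 2}$, which is the whole point of the lemma beyond the Kleinbock--Wadleigh order-of-magnitude statement, and which is needed later for the expectation computation and the constant $\tfrac{1}{2\log 2}$ in Theorems \ref{convergent in measure} and \ref{main thm-measure}. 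You discard constants at two places (replacing $|I(i,j)|$ by $(ij)^{-2}$ ``up to comparability'', and replacing $\mu$ by $\tfrac{1}{\log 2}\lambda$ ``up to a bounded multiplicative error''), and your closing assertion that the bounded density factor is ``already accounted for in the $O(1)$'' is false as stated: the Gauss density $\tfrac{1}{\log 2}\cdot\tfrac{1}{1+x}$ ranges over $[\tfrac{1}{2\log 2},\tfrac{1}{\log 2}]$ on $(0,1)$, so a priori it multiplies the main term $\tfrac{\log\varphi(n)}{\varphi(n)}$ by an unspecified factor between $\tfrac12$ and $1$; a multiplicative loss of this kind cannot be absorbed into an additive $O(1)$ in the numerator. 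Likewise $\mu(I(i))\asymp\tfrac{1}{\log 2}i^{-2}$ and the chain of $\asymp$'s in your display only ever yield $\mu(A_n)\asymp\tfrac{\log\varphi(n)}{\varphi(n)}$, not the stated asymptotic with constant.

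The gap is repairable, and the repair is in effect what the paper does: keep the errors localized. On $A_n\cap I(a)$ with $a\le\varphi(n)$ one has $x\in(\tfrac{1}{a+1},\tfrac1a]$, so the density equals $\tfrac{1}{\log 2}\bigl(1+O(1/a)\bigr)$ there, while $\lambda\bigl(A_n\cap I(a)\bigr)=O\bigl(\tfrac{1}{a\varphi(n)}\bigr)$; hence replacing the density by $\tfrac{1}{\log2}$ costs only $O\bigl(\sum_a \tfrac{1}{a^2\varphi(n)}\bigr)=O(1/\varphi(n))$, which the $O(1)$ in the numerator does absorb. Similarly, $|I(i,j)|=(ij)^{-2}\bigl(1+O(1/j)\bigr)$, and the terms where this relative error is not small (bounded $j$, hence $i\gtrsim\varphi(n)$) total $O(1/\varphi(n))$. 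The paper sidesteps all this bookkeeping by computing with $\mu$ from the start: $\mu\bigl(A_n\cap I(a)\bigr)$ is written exactly as $\tfrac{1}{\log 2}\log\bigl(1+\cdots\bigr)$, and the resulting sums are bounded above and below by integrals, giving both inequalities with the same constant. If you either add the localized density estimate above or simply run your conditioning-on-$a_1$ computation with the Gauss measure and exact interval endpoints, your argument closes; as written, the constant is not justified.
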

\begin{proof}
For any $n\geq 1$,
$$
\begin{aligned}
A_n=&\bigcup\limits_{1\le a\leq \varphi(n)}\left[\cfrac{1}{a+1/\lceil \cfrac{\varphi(n)}{a}\rceil},\frac{1}{a}\right)\bigcup
\left(\bigcup\limits_{a> \varphi(n)}\left[\frac{1}{a+1},\frac{1}{a}\right)\right)  \\
\subseteq & \bigcup\limits_{a\leq \varphi(n)}\left[\frac{1}{a+\frac{a}{\varphi(n)}},\frac{1}{a}\right)\bigcup \left(0,\frac{1}{\varphi(n)}\right).
\end{aligned}
$$
Since
$$
\begin{aligned}
\mu\left(\left[\frac{1}{a+\frac{a}{\varphi(n)}},\frac{1}{a}\right)\right)&=\frac{1}{\log2}\cdot\int_{\frac{1}{a+\frac{a}{\varphi(n)}}}^{\frac{1}{a}}\frac{1}{1+x}dx
\\ &=\frac{1}{\log2}\cdot \log\left(1+\frac{1}{a(\varphi(n)+1)+\varphi(n)}\right)  \\
&\leq \frac{1}{\log2}\cdot \frac{1}{a(\varphi(n)+1)+\varphi(n)}
\end{aligned}
$$
 and
 $$\begin{aligned}
   \mu\left(\left(0,\frac{1}{\varphi(n)}\right)\right)=\frac{1}{\log 2}\int_0^{1/\varphi(n)}\frac{dx}{1+x}=\frac{\log(1+1/\varphi(n))}{\log2}\le \frac{1}{\varphi(n)\log 2},
 \end{aligned}$$
we have
$$
\begin{aligned}
\mu(A_n) &\leq \frac{1}{\log2}\cdot \sum\limits_{a=1}^{\lfloor \varphi(n) \rfloor} \frac{1}{a(\varphi(n)+1)+\varphi(n)}+\frac{1}{\varphi(n)\log 2} \\
&\leq \frac{1}{\log2}\cdot \left(\frac{1}{2\varphi(n)+1}+\int_{1}^{\varphi(n)}  \frac{dx}{(\varphi(n)+1)x+\varphi(n)}\right)+\frac{1}{\varphi(n)\log 2} \\
& \leq \frac{1}{\log2}\cdot \frac{\log\varphi(n)+O(1)}{\varphi(n)}.
\end{aligned}
$$
On the other hand,
$$A_n \supset \bigcup\limits_{a\leq \varphi(n)}\left(\frac{1}{a+\frac{a}{a+\varphi(n)}},\frac{1}{a}\right).$$
Then we have
$$
\begin{aligned}
\mu(A_n) &\geq \frac{1}{\log2}\cdot \sum\limits_{a=1}^{\lfloor \varphi(n) \rfloor} \left(\log\left(1+\frac{1}{a}\right)-\log\left(1+\frac{1}{a+\frac{a}{a+\varphi(n)}}\right)\right)  \\
&= \frac{1}{\log2}\cdot \sum\limits_{a=1}^{\lfloor \varphi(n) \rfloor}\log\left(1+\frac{\frac{1}{a+\varphi(n)+1}}{a+\frac{a+\varphi(n)}{a+\varphi(n)+1}}\right)   \\
& \geq \frac{1}{\log2}\cdot \sum\limits_{a=1}^{\lfloor \varphi(n) \rfloor}\log\left(1+\frac{1}{(a+1)(a+\varphi(n)+1)}\right).
\end{aligned}
$$
Thus
$$
\begin{aligned}
\mu(A_n) & \geq \frac{1}{\log2}\cdot \sum\limits_{a=1}^{\lfloor \varphi(n) \rfloor} \left(\frac{1}{(a+1)(a+\varphi(n)+1)}- \frac{1}{2(a+1)^2(a+\varphi(n)+1)^2}\right) \\
&\ge\frac{1}{\log2}\cdot \sum\limits_{a=1}^{\lfloor \varphi(n) \rfloor} \left(\frac{1}{(a+1)(a+\varphi(n)+1)}- \frac{1}{2\varphi(n)^2}\right)\\
&\geq \frac{1}{\log2} \left(\int_{1}^{\lfloor \varphi(n) \rfloor}\frac{dx}{(x+1)(x+\varphi(n)+1)}\right)-\frac{1}{2\varphi(n)}  \\
&\geq \frac{1}{\log2} \frac{\log \varphi(n)+O(1)}{\varphi(n)},
\end{aligned}
$$
which completes the proof.
\end{proof}

To simplify notation,  for any irrational $x\in (0,1)$ and $i\geq 1$,  let
\[b_i(x)=a_i(x)a_{i+1}(x).\]
  For any real quantities $\xi$ and $\eta$, we use the notation $\xi\ll \eta$ if there is an unspecified constant $c$ such that $\xi\leq  c\eta$. The following lemma is an analogue of Lemma 2 in \cite{DiamondVaaler}.
\begin{lem}\label{only one large term}
Let $c>3/2$ and $g(n)=n\log^c n$. Then for Lebesgue almost all $x\in (0,1)$, there exists a positive integer $n_0(x)$ such that
$$\#\{1\leq i \leq n: b_i(x)>g(n)\}\leq 1$$
for all $n\geq n_0(x)$.
\end{lem}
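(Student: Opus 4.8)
The plan is to reduce the assertion to a single Borel--Cantelli argument along the lacunary sequence $n_k=2^k$, and then to estimate the $\lambda$-measure of the set of $x$ for which two of the terms $b_i(x)$, $1\le i\le 2n_k$, simultaneously exceed $g(n_k)$, treating adjacent and non-adjacent indices separately.

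\textbf{Reduction.} Put $n_k=2^k$ and $t_k:=g(n_k)=n_k\log^c n_k$. Since $g$ is increasing, for $n_k\le n<n_{k+1}=2n_k$ one has $b_i(x)>g(n)\Rightarrow b_i(x)>t_k$, so
\[\#\{1\le i\le n:\ b_i(x)>g(n)\}\ \le\ \#\{1\le i\le 2n_k:\ b_i(x)>t_k\}.\]
Hence it is enough to show that for almost every $x$ the right-hand side is at most $1$ for all large $k$, and then $n_0(x)=2^{k_0(x)}$ works. Writing
\[F_k:=\Big\{x\in(0,1):\ \#\{1\le i\le 2n_k:\ b_i(x)>t_k\}\ge 2\Big\}\ \subseteq\ \bigcup_{1\le i<j\le 2n_k}\big(\{b_i>t_k\}\cap\{b_j>t_k\}\big),\]
by the Borel--Cantelli lemma it suffices to prove $\sum_k\lambda(F_k)<\infty$; as the Gauss measure $\mu$ is comparable to $\lambda$, we may instead bound $\mu(F_k)$.

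\textbf{Two-term estimates.} We bound $\mu(\{b_i>t\}\cap\{b_j>t\})$ uniformly in $i<j$. If $j\ge i+2$, the event $\{b_i>t\}$ depends only on $a_i,a_{i+1}$ and is therefore a disjoint union of cylinders of order $i+1$, while $\{b_j>t\}=T^{-(j-1)}B$ with $B:=\{x:a_1(x)a_2(x)>t\}$. Applying Lemma~\ref{mixing-estimate} to each such cylinder with gap $j-i-2\ge 0$, summing, and using the $T$-invariance of $\mu$, we get $\mu(\{b_i>t\}\cap\{b_j>t\})=\mu(B)^2\big(1+O(\rho^{\,j-i-2})\big)\ll(\log t/t)^2$ by Lemma~\ref{measure of a_1a_2}. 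If $j=i+1$ the two factors share $a_{i+1}$ and mixing no longer applies; instead $\mu(\{b_i>t\}\cap\{b_{i+1}>t\})=\mu(\{a_1a_2>t,\,a_2a_3>t\})$ by $T$-invariance, and decomposing this set into cylinders of order $3$ and using $|I(a_1,a_2,a_3)|\le(a_1a_2a_3)^{-2}$ from \eqref{estimate of length of cylinder by partial quotients} (splitting according to whether $a_2\le t$ or $a_2>t$) gives $\mu(\{b_i>t\}\cap\{b_{i+1}>t\})\ll 1/t$.

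\textbf{Summation.} Since $F_k$ is covered by at most $2n_k$ terms coming from adjacent pairs and at most $(2n_k)^2$ terms from non-adjacent pairs, the estimates above yield
\[\lambda(F_k)\ \ll\ \frac{n_k}{t_k}+n_k^2\Big(\frac{\log t_k}{t_k}\Big)^{2}\ \ll\ \frac{1}{\log^{c}n_k}+\frac{(\log t_k)^2}{\log^{2c}n_k}\ \ll\ k^{-c}+k^{2-2c},\]
where we used $t_k=n_k\log^c n_k$, $\log n_k=k\log 2$ and $\log t_k\ll k$. The series $\sum_k(k^{-c}+k^{2-2c})$ converges precisely because $c>3/2$ --- this is exactly where the hypothesis is used: the non-adjacent pairs force $2-2c<-1$, while the adjacent pairs alone would only require $c>1$ --- and Borel--Cantelli finishes the proof. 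I expect the main obstacle to be the decorrelation step: a crude union bound such as $\mu(\{b_i>t\}\cap\{b_j>t\})\le\mu(\{b_j>t\})$ is far too weak, so one genuinely needs the product bound $(\log t/t)^2$, obtained by feeding the sharp tail estimate of Lemma~\ref{measure of a_1a_2} into the exponential mixing of Lemma~\ref{mixing-estimate}; a secondary, more elementary difficulty is the correlated adjacent pair $\{b_i>t,\,b_{i+1}>t\}$, which must be handled by the direct cylinder computation indicated above.
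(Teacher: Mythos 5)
Your proposal is correct and follows essentially the same route as the paper: a dyadic Borel--Cantelli argument along $n_k=2^k$, separating non-adjacent pairs (handled by the exponential mixing of Lemma~\ref{mixing-estimate} together with the tail estimate of Lemma~\ref{measure of a_1a_2}, giving $(\log t/t)^2$) from adjacent pairs (handled by a direct third-order cylinder computation split on $a_2\le t$ vs.~$a_2>t$, giving $1/t$), with the resulting bound $\ll k^{2-2c}+k^{-c}$ summable precisely when $c>3/2$. The details, including the reduction to blocks and the final Borel--Cantelli step, match the paper's proof.
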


\begin{proof}
For any $k\geq 1$, let
$$B_k=\bigcup\limits_{1\leq i< j\leq 2^{k+1}}\{x\in(0,1): b_i(x)>g(2^k), b_j(x)>g(2^k)\}$$ and
$$B_k(i,j)=\{x\in(0,1): b_i(x)>g(2^k), b_j(x)>g(2^k)\}.$$ Then
$$B_k=\bigcup_{\substack{1\leq i< j\leq 2^{k+1} \\ j-i\geq 2 }}B_k(i,j) \bigcup \bigcup_{\substack{1\leq i< j\leq 2^{k+1} \\ j-i=1 }}B_k(i,j)$$
and
$$\mu(B_k)\le \sum_{\substack{1\leq i< j\leq 2^{k+1} \\ j-i\geq 2 }}\mu(B_k(i,j))+\sum_{\substack{1\leq i\leq 2^{k+1}-1}}\mu(B_k(i,i+1))$$
By Lemmas \ref{mixing-estimate} and \ref{measure of a_1a_2},  
\begin{align}\label{measure-1}
&\sum_{\substack{1\leq i< j\leq 2^{k+1} \\ j-i\geq 2 }}\mu(B_k(i,j)) \notag\\
= &\sum_{\substack{1\leq i< j\leq 2^{k+1} \\ j-i\geq 2 }}\mu(\{x\in(0,1): b_i(x)>g(2^k)\})\cdot
\mu(\{x\in(0,1): b_j(x)>g(2^k)\})\cdot (1+O(\rho^{j-i-2})) \notag \\
\ll &(2^{k+1})^2\cdot \frac{(\log g(2^k))^2}{(g(2^k))^2}
+\sum_{\substack{1\leq i< j\leq 2^{k+1} \\ j-i\geq 2 }}\frac{(\log g(2^k))^2}{(g(2^k))^2}\cdot \rho^{j-i-2} \notag\\
\ll &(2^{2(k+1)}+2^{k+1})\cdot \frac{(\log g(2^k))^2}{(g(2^k))^2}.
\end{align}

Note that
$$\begin{aligned}
\mu(B_k(i,i+1))=&\mu\left(\{x\in(0,1): b_i(x)>g(2^k), b_{i+1}(x)>g(2^k)\}\right) \\
=&\mu\left(\{x\in(0,1): a_1(x)a_2(x)>g(2^k), a_2(x)a_3(x)>g(2^k)\}\right) \\
=&\mu\left(\left\{x\in(0,1): a_2(x)\leq g(2^k), a_1(x)> \frac{g(2^k)}{a_2(x)}, a_3(x)> \frac{g(2^k)}{a_2(x)}\right\}\right)\\
 &\qquad +\mu\left(\{x\in(0,1): a_2(x)>g(2^k)\}\right).
\end{aligned}$$
We have
$$\begin{aligned}
  \mu(\{x\in(0,1): a_2(x)>g(2^k)\}&=\mu(\{x\in(0,1): a_1(x)>g(2^k)\}\\
&\ll \lambda(\{x\in(0,1): a_1(x)>g(2^k)\} \\ &\ll \frac{1}{g(2^k)},
\end{aligned}$$
and
$$\begin{aligned}
&\mu\left(\{x\in(0,1): a_2(x)\leq g(2^k), a_1(x)> \frac{g(2^k)}{a_2(x)}, a_3(x)> \frac{g(2^k)}{a_2(x)}\}\right) \\
=&\sum\limits_{1\leq m\leq g(2^k)} \sum\limits_{i>\frac{1}{m} g(2^k)}  \sum\limits_{j>\frac{1}{m}g(2^k)}
\mu\left(\{x\in(0,1): a_2(x)=m, a_1(x)=i, a_3(x)=j\}\right) \\
\ll & \sum\limits_{1\leq m\leq g(2^k)} \sum\limits_{i>\frac{1}{m}g(2^k)} \sum\limits_{j>\frac{1}{m}g(2^k)}
\lambda\left(\{x\in(0,1): a_2(x)=m, a_1(x)=i, a_3(x)=j\}\right) \\
\ll & \sum\limits_{1\leq m\leq g(2^k)} \sum\limits_{i>\frac{1}{m}g(2^k)} \sum\limits_{j>\frac{1}{m}g(2^k)}
\frac{1}{m^2}\cdot \frac{1}{j^2} \cdot \frac{1}{i^2} \\
\ll & \sum\limits_{1\leq m\leq g(2^k)} \frac{m}{g(2^k)}\cdot \frac{m}{g(2^k)} \cdot \frac{1}{m^2}\\ &\ll \frac{1}{g(2^k)}.
\end{aligned}$$
Thus
$$\mu(B_k(i,i+1))=\mu(\{x\in(0,1): b_i(x)>g(2^k), b_{i+1}(x)>g(2^k)\})\ll \frac{1}{g(2^k)},$$
which implies that
\begin{equation}\label{measure-2}
\sum\limits_{1\leq i \leq 2^{k+1}-1}\mu(B_k(i,i+1))\ll \frac{2^{k+1}}{g(2^k)}.
\end{equation}
By inequalities \eqref{measure-1} and \eqref{measure-2}, we have
$$\begin{aligned}
\mu(B_k)&\ll (2^{2(k+1)}+2^{k+1})\cdot \frac{(\log g(2^k))^2}{(g(2^k))^2}+\frac{2^{k+1}}{g(2^k)} \\
&\ll k^{2-2c}+k^{-c},
\end{aligned}$$
which implies that $\sum_{k=1}^{\infty}\mu(B_k)<\infty$ since $c>\frac{3}{2}$. By the Borel--Cantelli Lemma,  for almost every $x\in (0,1)$,
there is an integer $k_0(x)$ such that for all $k\geq k_0(x)$, $$\# \{1\leq i\leq 2^{k+1}: b_i(x)>g(2^k)\}\leq 1.$$
It follows that for any $n\geq 2^{k_0(x)}$, we have
$$\# \{1\leq i\leq n: b_i(x)>g(n)\}\leq 1.$$
\end{proof}

\begin{lem}\label{expectation}
Let $\epsilon>0$. For any $N\ge 1$, we define
$$b_i^*(x)=b^{*}_{i,N}(x)=
\left\{
              \begin{array}{ll}
            b_i(x), & {\rm if} \ \  b_i(x)\leq N(\log N)^{\frac{3}{2}+\epsilon},\\ [2ex]
            0,      &  {\rm otherwise},\\
             \end{array}
\right.
$$
for $1\le i\le N$ and let $S_N^*(x)=\sum_{i=1}^{N}b_i^*(x)$. Then we have
$$\lim\limits_{N\rightarrow \infty}\frac{\mathbb{E}(S_N^*(x))}{N\log^2N}=\frac{1}{2\log 2},$$
where $\mathbb E$ denotes the mathematical expectation, that is,  $\mathbb E(h(x))=\int_0^1h(x)d\mu(x)$.
\end{lem}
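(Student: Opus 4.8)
The plan is to reduce the computation to the expectation of a single truncated term, using the $T$-invariance of the Gauss measure, and then to evaluate that single expectation by a layer-cake (Abel summation) argument based on Lemma \ref{measure of a_1a_2}. Write $M=M_N:=N(\log N)^{3/2+\epsilon}$, so that $b_i^*(x)=b_i(x)\,\mathbf{1}_{\{b_i(x)\le M\}}$. Since $b_i(x)=a_i(x)a_{i+1}(x)=(a_1a_2)\circ T^{i-1}(x)$ and the truncation level $M$ does not depend on $i$, we have $b_i^*=b_1^*\circ T^{i-1}$, and the $T$-invariance of $\mu$ gives $\mathbb E(b_i^*)=\mathbb E(b_1^*)$ for every $i$. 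Hence $\mathbb E(S_N^*)=N\,\mathbb E(b_1^*)$, and it suffices to prove
$$\mathbb E(b_1^*)=\frac{1}{2\log 2}\log^2 N+O(\log N\log\log N).$$

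For this, set $Y=a_1(x)a_2(x)$, a non-negative integer-valued random variable, and write $m=\lfloor M\rfloor$. From $Y\,\mathbf{1}_{\{Y\le M\}}=\sum_{t\ge1}\mathbf{1}_{\{t\le Y\le M\}}$ we get
$$\mathbb E(b_1^*)=\sum_{t=1}^{m}\mu(Y\ge t)-m\,\mu(Y\ge m+1).$$
Applying Lemma \ref{measure of a_1a_2} (with the constant function $\varphi\equiv t$, noting that its proof works verbatim for any real threshold $\ge1$, in particular for integers) gives, for every integer $t\ge1$, $\mu(Y\ge t)=\frac{1}{\log 2}\cdot\frac{\log t+O(1)}{t}$ with an absolute implied constant. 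Summing and using the elementary estimates $\sum_{t\le m}\frac{\log t}{t}=\frac12\log^2 m+O(\log m)$ and $\sum_{t\le m}\frac1t=O(\log m)$, we obtain $\sum_{t=1}^{m}\mu(Y\ge t)=\frac{1}{2\log 2}\log^2 m+O(\log m)$, while the correction term satisfies $m\,\mu(Y\ge m+1)\ll \log m$. Therefore $\mathbb E(b_1^*)=\frac{1}{2\log 2}\log^2 m+O(\log m)$. Finally, since $m=\lfloor M\rfloor$ with $M=N(\log N)^{3/2+\epsilon}$, we have $\log m=\log N+O(\log\log N)$, hence $\log^2 m=\log^2 N+O(\log N\log\log N)$, which yields the displayed asymptotic for $\mathbb E(b_1^*)$ and so $\mathbb E(S_N^*)/(N\log^2 N)\to \frac{1}{2\log 2}$.

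There is no serious obstacle here: the argument is essentially bookkeeping of error terms. The only point requiring care is that all implied constants remain uniform in $N$, which is guaranteed because the implied constant in Lemma \ref{measure of a_1a_2} is absolute; and the mild observation that multiplying the truncation threshold by the subpolynomial factor $(\log N)^{3/2+\epsilon}$ perturbs $\log^2(\cdot)$ only at lower order, so the truncation level affects neither the leading constant $\frac{1}{2\log 2}$ nor the leading power $\log^2 N$.
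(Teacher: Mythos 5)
Your proof is correct and follows essentially the same route as the paper: reduce to $\mathbb E(S_N^*)=N\,\mathbb E(b_1^*)$ via $T$-invariance, expand $\mathbb E(b_1^*)$ by Abel summation/layer-cake into $\sum_{t\le m}\mu(Y\ge t)-m\,\mu(Y\ge m+1)$, and apply Lemma \ref{measure of a_1a_2} at integer thresholds to get the leading term $\frac{1}{2\log 2}\log^2 m$ with lower-order errors, then note $\log m=\log N+O(\log\log N)$. The only cosmetic difference is that you make the $T$-invariance step and the uniformity of the implied constant explicit, which the paper leaves implicit.
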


\begin{proof}
We denote $\psi(N)=\lfloor N(\log N)^{\frac{3}{2}+\epsilon} \rfloor$. Then,
$$\begin{aligned}
\mathbb E(b_i^*(x))&=\int_{0}^{1}b_i^*(x)d\mu(x)=\sum\limits_{k=1}^{\psi(N)}k\cdot \mu\{x\in(0,1):b_i(x)=k\} \\
&=\sum\limits_{k=1}^{\psi(N)}k\cdot \left(\mu\{x\in(0,1):b_i(x)\geq k\}-\mu\{x\in(0,1):b_i(x)\geq k+1\}\right) \\
&=\sum\limits_{k=1}^{\psi(N)}\mu\{x\in(0,1):b_i(x)\geq k\}-\psi(N)\cdot \mu\{x\in(0,1):b_i(x)\geq \psi(N)+1\}.
\end{aligned}$$
By Lemma \ref{measure of a_1a_2}, we have
\begin{equation}\label{equation-1}
\mathbb E(b_i^*(x)) \leq \frac{1}{\log 2}\sum\limits_{k=1}^{\psi(N)}\frac{\log k+O(1)}{k}
\leq \frac{1}{\log 2}\cdot\left(\frac{1}{2}\log^2 \psi(N)+O(\log \psi(N))\right)
\end{equation}
and
$$\begin{aligned}
\mathbb E(b_i^*(x))
&\geq \frac{1}{\log 2}\sum\limits_{k=1}^{\psi(N)}\frac{\log k+O(1)}{k}
    -\frac{1}{\log2}\cdot \psi(N)\frac{\log (\psi(N)+1)+O(1)}{\psi(N)+1}  \\
&\geq \frac{1}{\log 2}\cdot(\frac{1}{2}\log^2 \psi(N)+O(\log \psi(N)))-\frac{1}{\log 2}\cdot(\log (\psi(N)+1)+O(1)).
   \end{aligned}$$
   Thus
\begin{equation}\label{equation-2}
\mathbb E(b_i^*(x))\ge \frac{1+o(1)}{2\log 2}\log^2 \psi(N).
\end{equation}
By \eqref{equation-1}, \eqref{equation-2} and $\mathbb E(S_N^*(x))=N\mathbb E(b_1^*(x))$, the conclusion follows.
\end{proof}

\subsection{Proof of Theorem \ref{main thm-measure}}

Write $$S_N^*(x)=\sum_{i=1}^{N}b_i^*(x), \qquad J_N=\mathbb E(S_N^*(x)),$$
where $b_i^*(x)$ is defined as in Lemma \ref{expectation}
 and let
\[\varphi(N)=N(\log N)^{\frac{3}{2}+\epsilon}\]
 with $0<\epsilon<\frac{1}{2}$  small enough (say $\epsilon=\frac{1}{8}$). We shall estimate $\textmd{Var}(S_N^*(x))$, the variance of $S_N^*(x)$. We first estimate the second moment of $S_N^*(x)$. We have
$$\mathbb E((S_N^*(x))^2)=\int_0^1(S_N^*(x))^2d\mu(x)=\sum\limits_{1\leq m,n\leq N}\int_0^1b_m^*(x)b_n^*(x)d\mu(x).$$
 To simplify notation we denote
\[b_{m,n}=\int_0^1b_m^*(x)b_n^*(x)d\mu(x).\]
There are three cases. 

\medskip

\noindent{\bf Case I. }  Let $|m-n|\geq 2$. For notational convenience, let $\Lambda_{u, v}=\{x:a_u(x)a_{u+1}(x)=v\} $, then
$$\begin{aligned}
b_{m,n}&=\sum\limits_{1\leq i,j\leq \varphi(N)}ij\cdot\mu\left(\{x: a_m(x)a_{m+1}(x)=i, a_n(x)a_{n+1}(x)=j\}\right) \\
&=\sum\limits_{1\leq i,j\leq \varphi(N)}ij\cdot\mu\left(\Lambda_{m, i} \right)
\mu(\Lambda_{n, j})(1+O(\rho^{|m-n|-2}))  \\
&=\left(\int_0^1b_1^*(x)d\mu(x)\right)^2(1+O(\rho^{|m-n|-2}))  \\
&=\frac{J_N^2}{N^2}\left(1+O(\rho^{|m-n|-2})\right),
\end{aligned}$$
where $0<\rho<1$ is defined in Lemma \ref{mixing-estimate}. Thus
$$\sum\limits_{m,n\in \Lambda}b_{m,n}\le J_N^2+\frac{J_N^2}{N^2}\sum\limits_{m,n\in \Lambda}\rho^{|m-n|-2}
\ll J_N^2+\frac{J_N^2}{N},$$
where $\Lambda=\{(m,n):|m-n|\geq 2, 1\leq m,n\leq N\}$.

\noindent{\bf Case II. }  If $m=n$, we have
$$\begin{aligned}
b_{m,n}&=\int_0^1(b_1^*(x))^2d\mu(x)=\sum\limits_{1\leq k\leq \varphi(N)}k^2\cdot \mu(\{x:b_1(x)=k\})\\
&=\sum\limits_{1\leq k\leq \varphi(N)}k^2\cdot \left(\mu(\{x:b_1(x)\geq k\})-\mu(\{x:b_1(x)\geq k+1\})\right) \\
&\leq \sum\limits_{1\leq k\leq \varphi(N)}(2k-1)\mu(\{b_1(x)\geq k\})\\& \ll \sum\limits_{1\leq k\leq \varphi(N)}(2k-1)\frac{\log k+O(1)}{k}  \\
&\ll \varphi(N)\log \varphi(N).
\end{aligned}$$
So
$$\sum\limits_{1\leq m=n\leq N}b_{m,n}\ll N \varphi(N)\log \varphi(N).$$

\noindent{\bf Case III. }  If $|m-n|= 1$, we assume $n=m+1$. Then
$$\begin{aligned}
b_{m,n}&=\int_0^1b_m^*(x)b_{m+1}^*(x)d\mu(x)=\int_0^1b_1^*(x)b_{2}^*(x)d\mu(x) \\
&=\sum\limits_{s\geq 1}\sum\limits_{t\geq 1} st\cdot \mu\left(\{x\in(0,1):b_1^*(x)=s, b_2^*(x)=t\}\right)\\
&=\sum\limits_{1\leq s\leq \varphi(N)}\sum\limits_{1\leq t\leq \varphi(N)} st\cdot \mu\left(\{x\in(0,1):b_1(x)=s, b_2(x)=t\}\right) \\
&=\sum\limits_{1\leq s\leq \varphi(N)}\sum\limits_{1\leq t\leq \varphi(N)} st\cdot \mu\left(\{x\in(0,1):a_1(x)a_2(x)=s, a_2(x)a_3(x)=t\}\right) \\
&=\sum\limits_{1\leq k\leq \varphi(N)}\sum\limits_{1\leq i\leq \frac{\varphi(N)}{k}}\sum\limits_{1\leq j\leq \frac{\varphi(N)}{k}}
ik^2j\cdot \mu\left(\{x\in(0,1):a_1(x)=i, a_2(x)=k, a_3(x)=j\}\right) \\
&\ll \sum\limits_{1\leq k\leq \varphi(N)}\sum\limits_{1\leq i\leq \frac{\varphi(N)}{k}}\sum\limits_{1\leq j\leq \frac{\varphi(N)}{k}}
ik^2j\cdot \lambda(I(i,k,j)) \\
&\ll \sum\limits_{1\leq k\leq \varphi(N)}\sum\limits_{1\leq i\leq \frac{\varphi(N)}{k}}\sum\limits_{1\leq j\leq \frac{\varphi(N)}{k}}
ik^2j\cdot \frac{1}{i^2k^2j^2} \\
&\ll \sum\limits_{1\leq k\leq \varphi(N)}(1+\log \varphi (N)-\log k)^2\\ & \ll \varphi(N).
\end{aligned}$$
It follows that
$$\sum_{\substack{1\leq m,n\leq N \\ |m-n|=1 }} b_{m,n}\ll N\varphi(N).$$
Thus
$$\mathbb E((S_N^*(x))^2)=\int_0^1(S_N^*(x))^2d\mu(x)\ll J_N^2+\frac{J_N^2}{N}+N\varphi(N)\log \varphi(N)+N\varphi(N),$$
which implies that
$$\begin{aligned}
\textmd{Var}(S_N^*(x))&=\int_0^1(S_N^*(x))^2d\mu(x)-J_N^2\\
&\ll \frac{J_N^2}{N}+N\varphi(N)\log \varphi(N)\\
&\ll N\varphi(N)\log \varphi(N)\\ &
\ll N^2(\log N)^{\frac{5}{2}+\epsilon}.
\end{aligned}.$$
Take $c(k)=\lfloor e^{k^{1-\epsilon}}\rfloor$. Since
$$\begin{aligned}
&\int_0^1\sum\limits_{k=1}^{\infty}\left(S_{c(k)}^*(x)-J_{c(k)}\right)^2 (c(k))^{-2} (\log c(k))^{-4}d\mu(x) \\
= & \sum\limits_{k=1}^{\infty} \int_0^1 \left(S_{c(k)}^*(x)-J_{c(k)}\right)^2 (c(k))^{-2} (\log c(k))^{-4}d\mu(x) \\
\ll & \sum\limits_{k=1}^{\infty}\frac{(c(k))^{2}(\log c(k))^{\frac{5}{2}+\epsilon}}{(c(k))^{2} (\log c(k))^{4}}\\
\ll & \sum\limits_{k=1}^{\infty}k^{-(1-\epsilon)(\frac{3}{2}-\epsilon)}<+\infty,
\end{aligned}$$
it follows that, almost surely, we have
$$\left|S_{c(k)}^*(x)-J_{c(k)}\right|=o(1)c(k)(\log c(k))^2=o(1)J_{c(k)}.$$
That is, $$\lim\limits_{k\to\infty}\frac{S_{c(k)}^*(x)}{J_{c(k)}}=1$$
for almost every $x\in(0,1)$.
Since
$$\lim\limits_{k\to\infty}\frac{J_{c(k+1)}}{J_{c(k)}}=\lim\limits_{k\to\infty}\frac{c(k+1)(\log c(k+1))^2}{c(k)(\log c(k))^2}= 1 $$
by Lemma \ref{expectation}, we have
$$\lim\limits_{N\rightarrow \infty}\frac{S_N^*(x)}{J_N}= 1 \textmd{ for a.e. } x\in (0,1).$$
It follows that
$$\lim\limits_{N\rightarrow \infty}\frac{S_N^*(x)}{N\log^2N}=\frac{1}{2\log 2}  \textmd{ for a.e. } x\in (0,1).$$
By Lemma \ref{only one large term}, for almost every $x\in (0,1)$ and $N\ge n_0(x)$, if
$$\max\limits_{1\leq i\leq N}a_i(x)a_{i+1}(x)>N(\log N)^{\frac{3}{2}+\epsilon},$$
we have
\begin{equation}\label{main thm equa-1}
S_N^*(x)=S_N(x)-\max\limits_{1\leq i\leq N}a_i(x)a_{i+1}(x).
\end{equation}
If
$$\max\limits_{1\leq i\leq N}a_i(x)a_{i+1}(x)\leq N(\log N)^{\frac{3}{2}+\epsilon},$$
we have $S_N^*(x)=S_N(x)$ and
\begin{equation}\label{main thm equa-2}
S_N^*(x)-N(\log N)^{\frac{3}{2}+\epsilon}\leq S_N(x)-\max\limits_{1\leq i\leq N}a_i(x)a_{i+1}(x)\leq S_N^*(x).
\end{equation}
Then by \eqref{main thm equa-1} and \eqref{main thm equa-2}, we always have
$$\lim\limits_{N\rightarrow \infty}\frac{S_N(x)-\max_{1\leq i\leq N}a_i(x)a_{i+1}(x)}{N\log^2N}
=\lim\limits_{N\rightarrow \infty}\frac{S_N^*(x)}{N\log^2N} \textmd{ for a.e. } x\in (0,1),$$
which completes the proof.

\subsection {Proof of Theorem \ref{convergent in measure}}
For any $\epsilon>0$ and $N\ge1$, we need to estimate $$\mu\left(\left\{x\in(0,1): \left|\frac{S_N(x)}{N\log^2N}-\frac1{2\log2}\right|>\epsilon\right\}\right).$$ For $1\le n\le N$,
let
$$b_n^{**}(x)=
\left\{
              \begin{array}{ll}
            b_n(x), & {\rm if}  \ \  b_n(x)\leq \epsilon N\log^2 N,\\ [2ex]
            0,      & {\rm otherwise},\\
             \end{array}
\right.
$$
and
$$S^{**}_N(x)=\sum\limits_{1\le n\le N}b^{**}_n(x).$$
Take $\psi(N)=\lfloor \epsilon N\log^2 N \rfloor$ and $\varphi(N)=\epsilon N\log^2 N$ respectively in the proof of Lemma \ref{expectation} and Theorem \ref{main thm-measure}. Indeed,
in the proof of Lemma \ref{expectation} and Theorem \ref{main thm-measure}, we have already proved that
\[\mathbb E(S^{**}_N(x))=\frac{N\log^2\psi(N)(1+o(1))}{2\log2}=\frac{N\log^2N(1+o(1))}{2\log2}\]
and
\[\textmd{Var}(S^{**}_N(x))\ll N\varphi(N)\log\varphi(N)\ll\epsilon N^2\log^3 N .\]
Thus by Chebyshev's inequality, we have
\begin{equation}\label{measure-estimate-1}
  \mu\left(\{x: \left|S^{**}_N(x)-\frac{N\log^2N}{2\log2}\right|>\epsilon N\log^2N\}\right)\ll \frac{1}{\epsilon\log N}.
\end{equation}
 On the other hand, for any $1\le n\le N$,  by Lemma \ref{measure of a_1a_2}, we get
 \begin{equation}\label{measure-estimate-2}
   \mu\left(\{x\in(0,1): a_n(x)a_{n+1}(x)\ge \epsilon N\log^2N\}\right)\ll \frac{1}{\epsilon N\log N}.
 \end{equation}
Then by \eqref{measure-estimate-1} and \eqref{measure-estimate-2}, it follows that
 \[\mu\left(\left\{x\in(0,1): \left|\frac{S_N(x)}{N\log^2N}-\frac1{2\log2}\right|>\epsilon\right\}\right) \ll \frac{1}{\epsilon \log N},\]
 which completes the proof.

\begin{rem}
  Philipp \cite{Philipp76} proved that for almost all $x\in(0,1)$,
  \[\liminf\limits_{n\to\infty}\frac{\max_{1\le i\le n}a_i(x)}{n/\log\log n}=\frac{1}{\log2}.\]
  Following the proof in \cite{Philipp76} with some modifications of measure estimate by Lemma \ref{measure of a_1a_2},
  we can get for almost all $x\in(0,1)$,
  \[\liminf\limits_{n\to\infty}\frac{\max_{1\le i\le n}a_i(x)a_{i+1}(x)}{n\log n/\log\log n}=\frac{1}{2\log2}.\]
\end{rem}
\section{Proof of Theorem \ref{thm-dimension-1}}\label{proof-for-full-dimension}
 Note that the upper bound is trivially $1$, hence we focus on the lower bound.
For any $M\ge 2$, let
\[E_M=\{x\in(0,1):a_n(x)\le M\textmd{ for all } n\ge1\}.\]
It is well known (see \cite{Jarnik1}\footnote{This article is unavailable on the AMS MathSciNet. However, it is available on the The Czech Digital Mathematics Library https://dml.cz/handle/10338.dmlcz/500717}) that
$$\lim\limits_{M\to\infty}\dim_{\mathcal H} E_M=1.$$
It suffices to prove that for any $M\ge 2$, there exists a Cantor subset $E(\phi,M)$ with $ \dim_{\mathcal H} E(\phi,M)=\dim_{\mathcal H} E_M.$
Take $0<\tau<1/2$ such that \begin{equation*}
 \limsup\limits_{n\to\infty}\frac{\log \log \phi(n)}{\log n}<1/2-\tau
\end{equation*} and take $0<\delta<1$ small enough such that
\begin{equation}\label{condition-delta}
  \left(1+\frac{1}{1-\delta}\right)\left(\frac{1}{2}-\tau\right)<1.
\end{equation}
let $\epsilon_k=k^{-\delta}$ for all $k\ge1$.
Now we shall define a sequence of positive integers $n_k$. Let $n_1\ge 3$ be the smallest integer such that
$$ \log \phi(n)<n^{1/2-\tau}$$
for all $n\ge n_1$. For $k\ge 2$, let $n_k$ be the smallest integer such that $n_k\ge n_{k-1}+4$ and
\begin{equation}\label{nk-def}
  \phi(n_k)\ge (1+\epsilon_{k-1})\phi(n_{k-1}).
\end{equation}
Define
\[
\begin{aligned}
 E(\phi,M):=\Big\{x\in(0,1): &a_{n_1}(x)=\lfloor 1/2(1+\epsilon_1)\phi(n_1) \rfloor+1,\\
 & a_{n_{k+1}}(x)=\lfloor 1/2\left((1+\epsilon_{k+1})\phi(n_{k+1})-(1+\epsilon_{k})\phi(n_{k})\right) \rfloor+1,\\
 &a_{n_k-1}(x)=a_{n_k+1}(x)=1\textmd{ for all }k\ge1, 1\le a_i(x)\le M\textmd{ for other }i\Big\}.
\end{aligned}\]
We first prove the following.
 \begin{lem}\label{subset E(phi,M)}
$E(\phi,M)\subset E(\phi)$.
 \end{lem}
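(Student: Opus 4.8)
The plan is to show that any $x\in E(\phi,M)$ satisfies $\lim_{n\to\infty}S_n(x)/\phi(n)=1$, by carefully tracking the contribution of the ``large'' partial quotients placed at the positions $n_k$ and showing that everything else is negligible. First I would record the structure of $x\in E(\phi,M)$: for each $k$, the partial quotient $a_{n_k}(x)$ is of size comparable to $\tfrac12\big((1+\epsilon_k)\phi(n_k)-(1+\epsilon_{k-1})\phi(n_{k-1})\big)$ (with the convention that the $k=1$ term is $\tfrac12(1+\epsilon_1)\phi(n_1)$), its two neighbours $a_{n_k-1}(x)=a_{n_k+1}(x)=1$, and all other partial quotients lie in $\{1,\dots,M\}$. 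Consequently the only products $b_i(x)=a_i(x)a_{i+1}(x)$ that are not bounded by $M^2$ are $b_{n_k-1}(x)=a_{n_k}(x)$ and $b_{n_k}(x)=a_{n_k}(x)$, i.e. each large partial quotient contributes exactly twice, and contributes $a_{n_k}(x)$ each time.

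Next I would estimate the partial sum up to a given $n$. Fix $n$ and let $K=K(n)$ be the largest index with $n_K\le n$. Splitting $S_n(x)=\sum_{i=1}^n b_i(x)$ into the "large" terms $\sum_{k\le K} (b_{n_k-1}(x)+b_{n_k}(x)) = 2\sum_{k\le K} a_{n_k}(x)$ and the "small" terms, the small terms number at most $n$ and are each at most $M^2$, so they contribute $O(M^2 n)$. Using $2a_{n_k}(x)=(1+\epsilon_k)\phi(n_k)-(1+\epsilon_{k-1})\phi(n_{k-1})+O(1)$, the large part telescopes to $(1+\epsilon_K)\phi(n_K)+O(K)$. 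Hence
\[
S_n(x)=(1+\epsilon_K)\phi(n_K)+O(K)+O(M^2 n).
\]
Now I divide by $\phi(n)$. Since $\phi$ is increasing, $n_K\le n<n_{K+1}$ gives $\phi(n_K)\le\phi(n)\le\phi(n_{K+1})\le(1+\epsilon_K)\phi(n_K)$ by the defining property \eqref{nk-def} and minimality of $n_{K+1}$ — more precisely $\phi(n)<(1+\epsilon_K)\phi(n_K)$ because $n_{K+1}$ is the \emph{smallest} integer with $n_{K+1}\ge n_K+4$ and $\phi(n_{K+1})\ge(1+\epsilon_K)\phi(n_K)$, so for all $m$ with $n_K+4\le m<n_{K+1}$ we have $\phi(m)<(1+\epsilon_K)\phi(n_K)$; the finitely many indices $m\in\{n_K+1,n_K+2,n_K+3\}$ are handled by $\phi(n_K+j)/\phi(n_K)\to1$, which follows from $\phi(n+1)/\phi(n)\to1$. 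Therefore $\phi(n)/\phi(n_K)\to1$ and $\epsilon_K\to0$, so $(1+\epsilon_K)\phi(n_K)/\phi(n)\to1$.

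It remains to show the two error terms are $o(\phi(n))$. For $O(M^2 n)$ this is immediate from the hypothesis $\phi(n)/n\to\infty$. For $O(K)$ I need $K=K(n)=o(\phi(n))$; since $n_K\le n$ and certainly $K\le n$ (the $n_k$ are strictly increasing integers), $K=o(\phi(n))$ again follows from $\phi(n)/n\to\infty$. This gives $S_n(x)/\phi(n)\to1$, i.e. $x\in E(\phi)$, proving $E(\phi,M)\subset E(\phi)$. The one genuinely delicate point — the step I'd expect to need the most care — is the squeeze $\phi(n_K)\le\phi(n)<(1+\epsilon_K)\phi(n_K)$: one must use that $n_{K+1}$ is \emph{minimal} subject to both $n_{K+1}\ge n_K+4$ and the growth condition \eqref{nk-def}, and separately dispose of the at most three indices between $n_K$ and $n_K+4$ using $\phi(n+1)/\phi(n)\to1$; without the minimality one only gets an upper bound of the wrong shape. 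The role of the conditions $\tau<1/2$ and \eqref{condition-delta} is not needed here (they enter the dimension estimate for $E(\phi,M)$, not the inclusion), so I would not invoke them in this lemma.
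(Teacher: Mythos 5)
Your overall strategy is the same as the paper's: telescope the contribution of the designated large partial quotients to $(1+\epsilon_K)\phi(n_K)+O(K)$, bound the remaining products by $M^2$ each, use the minimality in the definition of $n_{K+1}$ together with $\phi(n+1)/\phi(n)\to1$ to control the ratio $\phi(n)/\phi(n_K)$, and kill the error terms with $\phi(n)/n\to\infty$. The paper packages the first two steps as the sandwich $S_{n_K}(x)\le S_n(x)\le S_{n_{K+1}}(x)$, i.e. \eqref{sn-estimate}, and proves $\phi(n_k)/\phi(n_{k-1})\to1$, i.e. \eqref{phi-ratio}, by exactly the minimality argument you describe; you are also right that $\tau$ and $\delta$ play no role in this inclusion.

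There is, however, one slip in your accounting, and it occurs for infinitely many $n$, so it needs repair since the limit runs over all $n$. For $n_K\le n<n_{K+1}$ you claim that every term $b_i(x)$ with $i\le n$ and $i\notin\{n_k-1,n_k:k\le K\}$ is at most $M^2$. This fails when $n=n_{K+1}-1$: then the index $i=n_{K+1}-1$ appears in $S_n(x)$ and $b_{n_{K+1}-1}(x)=a_{n_{K+1}-1}(x)a_{n_{K+1}}(x)=a_{n_{K+1}}(x)$, a large term of the \emph{next} block, of size roughly $\tfrac12\bigl((1+\epsilon_{K+1})\phi(n_{K+1})-(1+\epsilon_K)\phi(n_K)\bigr)$, which is not bounded by $M^2$ and in general not absorbed by your $O(M^2n)$ error term; so your identity $S_n(x)=(1+\epsilon_K)\phi(n_K)+O(K)+O(M^2 n)$ is not justified at these $n$. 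The repair is cheap: either adopt the paper's sandwich $S_{n_K}(x)\le S_n(x)\le S_{n_{K+1}}(x)$, which charges this boundary term to the upper bound automatically, or keep your direct computation and add the observation that the extra term is at most $\tfrac12\bigl((1+\epsilon_{K+1})\phi(n_{K+1})-(1+\epsilon_K)\phi(n_K)\bigr)+1=o(\phi(n_K))$, where $\phi(n_{K+1})/\phi(n_K)\to1$ follows from the same minimality-plus-ratio argument you already use for $\phi(n)/\phi(n_K)$ (this is \eqref{phi-ratio} in the paper). With that one extra line your proof is complete and coincides in substance with the paper's.
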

 \begin{proof}
 For any $n$ large enough, there exists positive integer $k$ such that $n_k\le n<n_{k+1}$.  For any $x\in E(\phi,M)$, we have $$S_{n_k}(x)\le S_n(x)\le S_{n_{k+1}}(x).$$ Since
$$S_{n_k}(x)\ge \sum\limits_{i=1}^{k}(a_{n_i-1}(x)a_{n_i}(x)+a_{n_i}(x)a_{n_i+1}(x))\ge (1+\epsilon_k)\phi(n_k) $$ and
$$\begin{aligned}
 S_{n_{k+1}}(x)&\le (n_{k+1}-2k)M^2+\sum\limits_{i=1}^{k+1}(a_{n_i-1}(x)a_{n_i}(x)+a_{n_i}(x)a_{n_i+1}(x))\\
&\le (n_{k+1}-2k)M^2+(1+\epsilon_{k+1})\phi(n_{k+1})+2(k+1),
\end{aligned}$$
we have
\begin{equation}\label{sn-estimate}
 (1+\epsilon_k)\phi(n_k)\le S_n(x)\le (n_{k+1}-2k)M^2+(1+\epsilon_{k+1})\phi(n_{k+1})+2(k+1).
\end{equation}
By the definition of $n_k$, we have
either \[\phi(n_k-1)<(1+\epsilon_{k-1})\phi(n_{k-1})\] when $n_k>n_{k-1}+4$ or \[\phi(n_k-4)<(1+\epsilon_{k-1})\phi(n_{k-1})\]
when $n_k=n_{k-1}+4$. It follows that
$$\begin{aligned}
  \frac{\phi(n_k)}{\phi(n_{k-1})}&= \min\left\{\frac{\phi(n_k)}{\phi(n_k-1)}\frac{\phi(n_k-1)}{\phi(n_{k-1})}, \frac{\phi(n_k)}{\phi(n_k-4)}\frac{\phi(n_k-4)}{\phi(n_{k-1})}\right\}\\
  &\le (1+\epsilon_{k-1})\min\left\{\frac{\phi(n_k)}{\phi(n_k-4)},\frac{\phi(n_k)}{\phi(n_k-1)} \right\}.
\end{aligned}$$
Combining the above estimate with  $\lim\limits_{i\to\infty}\frac{\phi(i+1)}{\phi(i)}=1$, we have
\begin{equation}\label{phi-ratio}
  \lim\limits_{k\to\infty}\frac{\phi(n_k)}{\phi(n_{k-1})}=1.
\end{equation}
Since $$\frac{S_{n}(x)}{\phi(n_{k+1})}\le \frac{S_n(x)}{\phi(n)}\le \frac{S_{n}(x)}{\phi(n_k)},$$
we have $$\lim\limits_{n\to\infty}\frac{S_n(x)}{\phi(n)}=1$$
by \eqref{sn-estimate}, \eqref{phi-ratio} and the condition that $\lim\limits_{n\to\infty}\frac{\phi(n)}{n}=\infty$.
That is $x\in E(\phi)$. Thus $$E(\phi,M)\subset E(\phi).$$
\end{proof}

In order to estimate $\dim_{\mathcal H} E(\phi,M)$, we define the map $$f: E(\phi,M)\to E_M$$
by $$f(x)=[a_1(x),\ldots,a_{n_1-2}(x),a_{n_1+2}(x),\ldots,a_{n_k-2}(x),a_{n_k+2}(x),\ldots].$$
This means that if we delete all $a_{n_k-1}(x),a_{n_k}(x),a_{n_k+1}(x)$ from the partial quotients of $x$, then we get all the partial quotients of $f(x)$. For any $n\ge1$, let
\[r(n):=\#\{k: n_k\le n\}.\]
\begin{lem}\label{Lipschitz-map}
  For any $\epsilon>0$, the map $f$ is  $\frac{1}{1+\epsilon}$-Lipschitz.
\end{lem}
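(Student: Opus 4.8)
The plan is to compare, for two points $x,y\in E(\phi,M)$ that first differ in the $m$-th partial quotient, the distance $|x-y|$ with the distance $|f(x)-f(y)|$. Write $x'=f(x)$, $y'=f(y)$ and let $m'$ be the index of the partial quotient of $x'$ that corresponds to the $m$-th one of $x$ under the deletion map; thus $x'$ and $y'$ first differ at position $m'$, and $m'=m-3r(m-1)$ roughly (the number of indices among $1,\dots,m-1$ that are of the form $n_k-1$, $n_k$, or $n_k+1$ is $3r(m-1)$, up to a bounded correction depending on whether $m$ itself is near some $n_k$). Since $x,y$ agree on $a_1,\dots,a_{m-1}$ and these are bounded by $M$ (or are the prescribed block values, which are of order $\phi$), both $x$ and $y$ lie in the cylinder $I(a_1(x),\dots,a_{m-1}(x))$, whose length is $q_{m-1}^{-1}(q_{m-1}+q_{m-2})^{-1}\asymp q_{m-1}^{-2}$; similarly $x',y'$ both lie in $I(a_1(x'),\dots,a_{m'-1}(x'))$ of length $\asymp (q'_{m'-1})^{-2}$, where $q'_j$ is the denominator sequence for $x'$. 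So it suffices to control the ratio of these two cylinder lengths, i.e. to show $(q'_{m'-1})^{2}/q_{m-1}^{2}$ is bounded below by something like $(1+\epsilon)^{-2m}$ — and then combine with the trivial observation that within each cylinder the two points are separated by at least (a constant times) the length of the next-order cylinder in each case, which contributes comparable factors.

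The key step is the denominator estimate. The sequence $(q_j)$ for $x$ is obtained from $(q'_j)$ for $x'$ by inserting, at the positions $n_k-1,n_k,n_k+1$, three extra steps of the recursion $q_i=a_iq_{i-1}+q_{i-2}$. Two of the inserted partial quotients equal $1$ (namely $a_{n_k-1}=a_{n_k+1}=1$) and the middle one, $a_{n_k}$, is of order $\phi(n_k)$. Using $q_i\le (a_i+1)q_{i-1}$ and $q_i\ge a_i q_{i-1}$, inserting a partial quotient $a$ multiplies the running denominator by a factor between $a$ and $a+1$ (up to the lower-order $q_{i-2}$ term, which only helps the lower bound). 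Hence
\[
q_{m-1}\ \asymp\ q'_{m'-1}\cdot\prod_{k:\,n_k\le m}\big(a_{n_k-1}a_{n_k}a_{n_k+1}\big)\ \asymp\ q'_{m'-1}\cdot\prod_{k:\,n_k\le m} a_{n_k}(x),
\]
up to multiplicative constants that are $O(1)$ per inserted block but bounded uniformly since each inserted block contributes a factor in a bounded ratio of the "clean" product. Therefore $q_{m-1}/q'_{m'-1}$ is roughly $\prod_{n_k\le m}\tfrac12(1+\epsilon_k)\phi(n_k)$, which is at most $\phi(m)^{r(m)}\le \exp\!\big(r(m)\log\phi(m)\big)$.

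To finish, I would use the growth condition built into the construction: $\log\phi(n)<n^{1/2-\tau}$ for $n\ge n_1$, and the spacing $n_k\ge n_{k-1}+4$ together with the defining inequality $\phi(n_k)\ge(1+\epsilon_{k-1})\phi(n_{k-1})$, which forces $r(m)$ to grow sub-linearly in $m$ (indeed $r(m)=o(m^{1/2+\tau})$, say, from \eqref{condition-delta}-type bookkeeping: the number of blocks up to $m$ times $\log\phi(m)$ is $o(m)$). Consequently $r(m)\log\phi(m)=o(m)$, so $q_{m-1}/q'_{m'-1}\le e^{o(m)}$, and squaring, $|I(a_1,\dots,a_{m-1})|/|I(a_1',\dots,a_{m'-1}')|\ge e^{-o(m)}\ge (1+\epsilon)^{-m'}$ for all $m'$ large (since $m'\asymp m$ and $o(m)$ beats $m'\log(1+\epsilon)$... wait, I must be careful: I need a bound of the form $(1+\epsilon)^{-1}$ per step, i.e. per unit of $m'$, not merely $e^{o(m)}$). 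Here is the actual point: $|x-y|\ge c\,(1+\epsilon)^{-m'}|f(x)-f(y)|$ would give a $\tfrac{1}{1+\epsilon}$-Hölder-type bound, but the claim is genuinely Lipschitz with constant $\tfrac{1}{1+\epsilon}$ in the sense that $|f(x)-f(y)|\le C|x-y|^{1/(1+\epsilon)}$ — so what must be shown is $|f(x)-f(y)|\ll |x-y|^{1/(1+\epsilon)}$, equivalently $\big(q'_{m'-1}\big)^{2}\gg \big(q_{m-1}^{2}\big)^{1/(1+\epsilon)}$, i.e. $\log q'_{m'-1}\ge \tfrac{1}{1+\epsilon}\log q_{m-1}-O(1)$. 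Since $\log q_{m-1}\ge c\,m$ (each $a_i\ge1$ forces at least Fibonacci growth, and the large blocks only increase it) while $\log q_{m-1}-\log q'_{m'-1}=r(m)\log\phi(m)+O(r(m))=o(m)$, we get $\log q'_{m'-1}=(1-o(1))\log q_{m-1}\ge \tfrac{1}{1+\epsilon}\log q_{m-1}$ once $m$ is large enough that $o(1)<\epsilon/(1+\epsilon)$. The main obstacle is precisely this bookkeeping: verifying that $r(m)\log\phi(m)=o\big(\log q_{m-1}\big)=o(m)$ rigorously from the construction of the $n_k$ and the hypothesis $\limsup \log\log\phi(n)/\log n<1/2-\tau$, and handling the boundary cases where $m$ sits inside one of the three-term inserted blocks (so that $f$ "cuts through" a block), where one must argue directly that the extra large partial quotient $a_{n_k}$, being deleted, can only shrink distances — this is the one place a little care with the parity/side conventions for adjacent cylinders recorded in Section \ref{prepare} is needed.
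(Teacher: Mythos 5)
Your overall architecture is the same as the paper's: bound $|x-y|$ from below by the length of the cylinder on which $x$ and $y$ agree, compare that cylinder with its image cylinder by accounting for the deleted partial quotients $a_{n_k-1},a_{n_k},a_{n_k+1}$, and conclude from exponential growth of the denominators together with the subexponential bookkeeping $r(m)\log\phi(m)=o(m)$; the paper phrases the comparison via cylinder lengths (its facts \eqref{fact 1} and \eqref{fact 2}) where you use continuants, which is only a cosmetic difference. However, the two steps you treat as routine are exactly where the work lies. First, the separation estimate $|x-y|\ge C_M\,|I(a_1(x),\ldots,a_{m-1}(x))|$ is not a ``trivial observation'': two points of $E(\phi,M)$ lying in adjacent $m$-cylinders could a priori be arbitrarily close across the common endpoint. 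The paper's argument uses that the first disagreement index $m$ can never lie in a block $\{n_k-1,n_k,n_k+1\}$ (those partial quotients are identical for every point of $E(\phi,M)$), hence $a_m(x),a_m(y)\le M$, and then shows that either $I(a_1,\ldots,a_{m-1},a_m(x),M+1)$ or $I(a_1,\ldots,a_{m-1},a_m(y),M+1)$ lies entirely in the gap between $x$ and $y$; the ``either/or'' is what accommodates, in particular, the case where position $m+1$ is some $n_k$ carrying the huge prescribed quotient. Your proposal omits this argument, and your worry about $m$ ``sitting inside a block'' is vacuous for the same structural reason, which suggests this point was missed.

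Second, you flag $r(m)\log\phi(m)=o(m)$ as ``the main obstacle'' but do not establish it, and this is where the hypotheses on $\phi$ enter: the paper obtains it from the Wu--Xu estimate $r(n)\ll n^{(1/2-\tau)/(1-\delta)}$ for the sequence defined by \eqref{nk-def}, combined with $\log\phi(n)<n^{1/2-\tau}$ and the choice \eqref{condition-delta} of $\delta$; this is precisely its condition \eqref{rn and tau}. Note also that the multiplicative constants in your continuant comparison accumulate as $e^{O(r(m))}$ (one bounded factor per deleted quotient), so ``$O(1)$ per inserted block but bounded uniformly'' is not right as stated; these constants must be folded into the same $o(m)$ bookkeeping, as you in effect do later when you write $\log q_{m-1}-\log q'_{m'-1}=r(m)\log\phi(m)+O(r(m))$. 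Your final conversion of that bound into $|f(x)-f(y)|\ll|x-y|^{1/(1+\epsilon)}$ via $\log q_{m-1}\ge cm$ is fine and plays the role of the paper's \eqref{fact 2}. In short, the route is the paper's route, but the proposal leaves genuine gaps at the separation estimate and at the counting estimate \eqref{rn and tau}.
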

\begin{proof}The proof is similar to that in \cite{WuXu} and \cite{LiaoRams} with slight changes.
  Suppose that $x,y\in E(\phi,M)$ with some integer $n$ such that
  \begin{equation}\label{x-y-n}
    a_i(x)=a_i(y)\textmd{ for }1\le i\le n, ~ a_{n+1}(x)\ne a_{n+1}(y).
  \end{equation}
  Then $n+1\not\in\bigcup_{i\ge1}\{n_i-1, n_i, n_{i}+1\}$. It follows that $1\le a_{n+1}(x), a_{n+1}(y)\le M$
and either $I(a_1(x),\ldots,a_n(x),a_{n+1}(x),M+1)$ or $I(a_1(x),\ldots,a_n(x),a_{n+1}(y),M+1)$ is in the gap between $x$ and $y$. By \eqref{length of cylinders} and \eqref{recursive formulae of qn}, there exists a constant $C_M$ only depending on $M$ such that
\[\begin{aligned}
  |I(a_1(x),\ldots,a_n(x),a_{n+1}(x),M+1)|&\ge |I(a_1(x),\ldots,a_n(x),M,M+1)|\\
  &\ge C_M |I(a_1(x),\ldots,a_n(x))|.
\end{aligned}\] Similarly, we also have
\[|I(a_1(x),\ldots,a_n(x),a_{n+1}(y),M+1)|\ge C_M |I(a_1(x),\ldots,a_n(x))|.\] So
\begin{equation}\label{distance by length of cylinder}
 |x-y|\ge C_M |I(a_1(x),\ldots,a_n(x))|.
\end{equation}
On the other hand, $f(I(a_1(x),\ldots,a_n(x)))$ is also a cylinder of order $n-3r(n)$. Let
\[I(b_1,\ldots,b_{n-3r(n)}):=f(I(a_1(x),\ldots,a_n(x))).\]
Note that we get $(b_1,\ldots,b_{n-3r(n)})$ by deleting  $\{a_{n_i-1}(x),a_{n_i}(x),a_{n_i+1}(x) :1\le i\le r(n)\}$ from $(a_1(x),\ldots,a_n(x))$.
We list the following two facts:
\begin{equation}\label{fact 1}
 |I(a_1(x),\ldots,a_n(x))|\ge ca_{i}^{-2}(x)|I(a_1(x),\ldots,a_{i-1}(x),a_{i+1}(x),\ldots,a_n(x))|
\end{equation}
and
\begin{equation}\label{fact 2}
 |I(b_1,b_2,\ldots,b_{n-3r(n)})|\le \tau^{n-3r(n)-1}
\end{equation}
for any $x\in E(\phi,M)$ and any $1\le i\le n$, where $c>0$ and $0<\tau<1$ are two absolute constants. The two facts have been proved and used in \cite{WuXu} (See also \cite[Lemma 4.1]{HuYu} and \cite[Theorem 12]{Khinchin_book64}). Since $a_{n_k}(x)$ is the same for any $x\in E(\phi,M)$ and
any $k\ge1$, we denote $a_{n_k}(x)$ by $a_{n_k}$.
If
\begin{equation}\label{rn and tau}
  \lim\limits_{n\to\infty}\frac{r(n)}{n}=
  \lim\limits_{n\to\infty}\frac{\log (a_{n_1}a_{n_2}\ldots a_{n_{r(n)}})}{n}=0,
\end{equation}
then there exists an integer $n_0\ge1$ only depending on $\epsilon$ such that
\begin{equation}\label{tau and rn}
c^{3r(n)}(a_{n_1}a_{n_2}\ldots a_{n_{r(n)}})^{-2}\ge \tau^{(n-3r(n)-1)\epsilon}
\end{equation}
for all $n\ge n_0$. Thus by \eqref{distance by length of cylinder}, \eqref{fact 1}, \eqref{fact 2} and \eqref{tau and rn},
\[\begin{aligned}
  |x-y|&\ge C_M |I\left(a_1(x),\ldots,\ldots,a_n(x)\right)|\\
  &\ge C_M c^{3r(n)}(a_{n_1}a_{n_2}\ldots a_{n_{r(n)}})^{-2}|I(b_1,b_2,\ldots,b_{n-3r(n)})|\\
  &\ge C_M\tau^{(n-3r(n)-1)\epsilon}|I(b_1,b_2,\ldots,b_{n-3r(n)})|\\
  &\ge C_M |I(b_1,b_2,\ldots,b_{n-3r(n)})|^{1+\epsilon}\\
  &\ge C_M |f(x)-f(y)|^{1+\epsilon}
\end{aligned}\]
for any $n\ge n_0$ and any $x,y\in E(\phi,M)$ satisfying \eqref{x-y-n}, which implies that
$f$ is a $\frac{1}{1+\epsilon}$-Lipschitz map. So we are left with the task of proving \eqref{rn and tau}.
It has been proved  in \cite[eq. (10) and eq. (12)]{WuXu} that if $n_k$ satisfies \eqref{nk-def} then we have
\begin{equation}\label{estimate-of-rn}
 r(n)\ll n^{\frac{1/2-\tau}{1-\delta}}.
\end{equation}
Since
$$\begin{aligned}
  \log (a_{n_1}a_{n_2}\ldots a_{n_{r(n)}})&\ll \log \prod\limits_{k=1}^{r(n)}(1+\epsilon_k)\phi(n_k)\\
  &\ll r(n)\log \phi(n)+(\epsilon_1+\cdots+\epsilon_{r(n)})\\
  &\ll r(n) n^{1/2-\tau}+r(n)^{1-\delta},
  \end{aligned}$$
  it follows that \eqref{rn and tau} holds by \eqref{condition-delta} and \eqref{estimate-of-rn}. This completes the proof.
\end{proof}

By the definition of $E(\phi,M)$ and $f$, the map $f$ is a bijection. From Lemmas \ref{subset E(phi,M)}, \ref{Lipschitz-map} and \cite[Proposition 2.3]{Falconer_book2003}, we have
\[\dim_{\mathcal H} E(\phi)\ge \dim_{\mathcal H} E(\phi,M)\ge \frac{1}{1+\epsilon}\dim_{\mathcal H} E_M.\]
Letting $\epsilon\to 0$ and $M\to \infty$, we get the desired result.

\section{Proof of Theorems \ref{thm-dimension-1/2} and \ref{thm-dimension-1/1+alpha}}\label{proof-of-last-two} For the rest of the paper, we let $\psi(n):=\log \phi(n)$ which is monotonically increasing and $\lim_{n\to\infty}\psi(n)=\infty$.
\subsection{The lower bounds}
To prove the lower bound, we shall use the following lemma  from Falconer's book  \cite[Example 4.6]{Falconer_book2003}.
\begin{lem}\label{Falconer's lemma}
Let $E_0=[0,1]$ and let $E_n$ be a finite union of disjoint closed intervals with $E_{n}\subset E_{n-1}$ for any $n\ge1$.  Suppose that
each interval of $E_{n-1}$ contains at least $m_n(\ge 2)$ intervals
of $E_{n}$  and the intervals of $E_n$ are separated by gaps at least $\epsilon_n$ with $0<\epsilon_{n+1}<\epsilon_n$ for all $n\ge1$. Let $E=\bigcap\limits_{n\ge0}E_n$. Then
$$\dim_{\mathcal H} E\ge \liminf\limits_{n\to\infty}\frac{\log(m_1\cdots m_{n-1})}{-\log(m_n\epsilon_n)}.$$
\end{lem}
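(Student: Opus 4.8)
The plan is to prove this lower bound by the classical mass distribution principle (see \cite{Falconer_book2003}): one builds a probability measure $\mu$ carried by $E$ and shows $\mu(B)\ll|B|^{s}$ for every small interval $B$ and every $s$ strictly below the stated $\liminf$, whence $\mathcal H^{s}(E)>0$ and $\dim_{\mathcal H}E\ge s$.

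First I would reduce to a \emph{regular} Cantor set. Since each interval of $E_{n-1}$ contains \emph{at least} $m_{n}$ intervals of $E_{n}$, discard all but exactly $m_{n}$ of them inside each parent interval; this only shrinks $E$, so it suffices to prove the bound for the resulting set, still called $E$, in which every $(n-1)$-th level interval splits into exactly $m_{n}$ children. On it define $\mu$ by assigning every $n$-th level interval the mass $(m_{1}m_{2}\cdots m_{n})^{-1}$; the splitting relation makes this consistent from level to level, and a routine extension argument produces a Borel probability measure supported on $\bigcap_{n}E_{n}=E$. (If $\epsilon_{n}$ does not tend to $0$ the nested construction cannot survive infinitely many levels inside $[0,1]$, so we may and do assume $\epsilon_{n}\to0$.)

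The heart of the matter is the estimate of $\mu(B)$. Given an interval $B$ with $|B|<\epsilon_{1}$, pick $n$ with $\epsilon_{n}\le|B|<\epsilon_{n-1}$. Consecutive intervals of $E_{n-1}$ are separated by gaps $\ge\epsilon_{n-1}>|B|$, so $B$ meets at most one interval $I$ of $E_{n-1}$, giving $\mu(B)\le\mu(I)=(m_{1}\cdots m_{n-1})^{-1}$; and the $m_{n}$ intervals of $E_{n}$ inside $I$ are $\epsilon_{n}$-separated, so $B$ meets at most $|B|/\epsilon_{n}+1\le 2|B|/\epsilon_{n}$ of them (using $|B|\ge\epsilon_{n}$), giving $\mu(B)\le\bigl(2|B|/\epsilon_{n}\bigr)(m_{1}\cdots m_{n})^{-1}$. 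Thus $\mu(B)\le(m_{1}\cdots m_{n-1})^{-1}\min\{1,\,2|B|/(m_{n}\epsilon_{n})\}$. Now fix $s<\liminf_{n}\frac{\log(m_{1}\cdots m_{n-1})}{-\log(m_{n}\epsilon_{n})}$, and note we may take $s\le1$; then $(m_{1}\cdots m_{n-1})^{-1}\le(m_{n}\epsilon_{n})^{s}$ for all large $n$, and a short two-case computation (according to whether $2|B|\ge m_{n}\epsilon_{n}$ or not, using $s-1\le0$ in the second case) yields $\mu(B)\le 2|B|^{s}$ for all sufficiently small intervals $B$, hence for all sufficiently small sets $U$ after enclosing $U$ in an interval of diameter $|U|$. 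The mass distribution principle then gives $\mathcal H^{s}(E)\ge\tfrac12>0$ and $\dim_{\mathcal H}E\ge s$; letting $s$ increase to the $\liminf$ finishes the proof.

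The only genuinely delicate point is calibrating the level $n$ to the size of $B$ so that \emph{both} bounds on $\mu(B)$ are simultaneously available and then interpolating between them; everything else is bookkeeping with the gap hypothesis and the definition of $\mu$.
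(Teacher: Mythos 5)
Your proof is correct. The paper does not actually prove this lemma but simply quotes it from Falconer's book (\cite[Example~4.6]{Falconer_book2003}); the argument you give --- pruning to exactly $m_n$ children per level, spreading mass uniformly, calibrating the level $n$ via $\epsilon_n\le|B|<\epsilon_{n-1}$ to get the two bounds on $\mu(B)$, interpolating according to the size of $2|B|$ relative to $m_n\epsilon_n$, and invoking the mass distribution principle --- is precisely the standard proof found in Falconer, so there is nothing to compare beyond noting the match. The small remarks you add (that $\epsilon_n\to0$ is forced because $E_n$ packs at least $2^n$ gaps of size $\ge\epsilon_n$ into $[0,1]$, and that one may assume $s\le1$) are accurate and correctly patch the only places where a reader might pause.
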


The next lemma gives a lower bound of $\dim_{\mathcal H} E(\phi)$ in Theorem  \ref{thm-dimension-1/2}.
\begin{lem}\label{lower-bound-1/2} Let $x_n=\psi(n)-\psi(n-1)$ for $n\ge 2$.
  Suppose that $x_n$ is monotonically decreasing with
  \begin{equation}\label{conditions-on-psi-n}
    \lim\limits_{n\to\infty}x_n=\lim\limits_{n\to\infty}\frac{x
    _n-x_{n-1}} {x_n^2}=0.
  \end{equation}
Then $\dim_{\mathcal H} E(\phi)\ge 1/2$.
\end{lem}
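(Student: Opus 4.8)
The plan is to construct an explicit Cantor subset of $E(\phi)$ and apply Lemma \ref{Falconer's lemma}. The idea is that to make $S_n(x)$ grow like $\phi(n) = e^{\psi(n)}$, we want the partial quotients to be bounded (say by some large constant) except at a sparse sequence of indices $n_k$, where we place a single large partial quotient whose square (roughly $a_{n_k}^2$, since $a_{n_k-1}$ and $a_{n_k+1}$ will be forced to equal $1$, so $b_{n_k-1} + b_{n_k} \approx 2a_{n_k}$) is tuned to push the sum $S_n$ up to the target value $\phi(n_k)$. Here, because $\phi$ grows subexponentially, we expect $a_{n_k}$ to be roughly $\sqrt{\phi(n_k)}$, and the $\tfrac12$ in the exponent comes precisely from this square root: the cylinder $I(a_1,\dots,a_n)$ has length comparable to $\prod a_i^{-2}$, so a block contributing a factor $a_{n_k}$ to $S_n$ costs a factor $a_{n_k}^{-2}$ in length, yielding a dimension $\le 1/2$, and we aim to match it from below.

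Concretely, I would first fix a large integer $M$ and let $n_k$ be the smallest index with $\phi(n_{k+1}) \ge \phi(n_k) + (\text{something that grows})$, or more precisely defined so that the increments $\psi(n_k) - \psi(n_{k-1})$ are controlled by the quantities $x_n$. The monotone decrease of $x_n$ and the conditions $x_n \to 0$, $(x_n - x_{n-1})/x_n^2 \to 0$ are exactly what is needed to guarantee (a) that consecutive $n_k$ are not too far apart, so the ``free'' indices where $1 \le a_i \le M$ dominate the cardinality count, and (b) that the ratio $\phi(n_k)/\phi(n_{k-1}) \to 1$ so the limit $S_n(x)/\phi(n) = 1$ holds along all $n$, not just along the subsequence $n_k$ — this mirrors Lemma \ref{subset E(phi,M)} in the previous section. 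At level $k$, each surviving interval of $E_{n_{k}-1}$ splits into about $m_k \approx M^{\,n_{k+1} - n_k - O(1)}$ children at level $n_{k+1}$ (the number of choices of the $M$-bounded partial quotients in between), while the separating gap $\epsilon_k$ between level-$n_{k+1}$ intervals is comparable to the length of a cylinder of order $n_{k+1}$, hence of size roughly $\prod_{j \le k} a_{n_j}^{-2} \cdot M^{-2(n_{k+1}-O(k))}$ up to constants depending on $M$.

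Then I would plug these into Lemma \ref{Falconer's lemma}: the numerator $\log(m_1 \cdots m_{k-1})$ is of order $(\log M)\, n_k$, and the denominator $-\log(m_k \epsilon_k)$ is of order $(\log M)\, n_k + 2\sum_{j \le k}\log a_{n_j}$. Using $a_{n_j} \approx e^{\psi(n_j)/2}$, the sum $\sum_{j\le k}\log a_{n_j} \approx \tfrac12 \sum_{j \le k}(\psi(n_j) - \psi(n_{j-1}))\cdot(\text{number of }j\text{'s of that size})$; the decay conditions on $x_n$ force this telescoping-type sum to be $o(n_k)$ relative to $n_k$ once $M$ is large... wait — more carefully, since $\psi(n_k)$ itself can be comparable to or larger than $n_k$ is not possible here because $\psi(n) = \log \phi(n) = n^\alpha$ with $\alpha$ possibly $\ge 1$; so actually I need $\sum_{j \le k} \log a_{n_j} \approx \tfrac12 \psi(n_k) \cdot(\text{bounded factor})$ and compare with $(\log M) n_k$. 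Letting $M \to \infty$ at the end makes the $(\log M) n_k$ terms dominate both numerator and denominator, and the ratio tends to $1$; but to get exactly $1/2$ rather than $1$, I must instead \emph{not} send $M \to \infty$ but rather exploit that the denominator carries a factor $2$ on the $\sum \log a_{n_j}$ part which, combined with the relation between $n_k$ and $\psi(n_k)$ dictated by the $x_n$ conditions, balances out to $1/2$. The main obstacle, and the step requiring the most care, is this final asymptotic bookkeeping: choosing the defining relation for $n_k$ so that simultaneously (i) $r(n)/n$ and the logarithmic contribution of the large digits behave correctly, (ii) the Falconer ratio converges to exactly $1/2$ and not something smaller, and (iii) the limit defining membership in $E(\phi)$ genuinely holds for \emph{every} $n$. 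I would handle (iii) by the same sandwiching argument as in \eqref{sn-estimate}–\eqref{phi-ratio}, and (i)–(ii) by a direct estimate using monotonicity and \eqref{conditions-on-psi-n} to show the error terms are lower-order. The upper bound $\dim_{\mathcal H} E(\phi) \le 1/2$ is a separate matter handled by the later lemmas referenced in the excerpt, so here I only claim the lower bound.
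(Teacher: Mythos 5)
There is a genuine gap, and it is structural rather than cosmetic. First, your description of the spikes is internally inconsistent: if $a_{n_k-1}=a_{n_k+1}=1$, the spike contributes $b_{n_k-1}+b_{n_k}\approx 2a_{n_k}$, which is \emph{linear} in $a_{n_k}$, so to make $S_{n_k}$ track $\phi(n_k)$ you must take $a_{n_k}\approx\tfrac12\bigl(\phi(n_k)-\phi(n_{k-1})\bigr)$, not $a_{n_k}\approx\sqrt{\phi(n_k)}$. More importantly, with your count $m_k\approx M^{\,n_{k+1}-n_k-O(1)}$ all the entropy of the construction sits in the bounded digits, while the measure cost is dominated by the spikes: the minimal gap at level $n_k$ is at most comparable to the cylinder length, so $-\log(m_k\epsilon_k)\gtrsim 2n_k\log M+2\sum_{j\le k}\log a_{n_j}\approx 2n_k\log M+2\sum_{j\le k}\psi(n_j)$, whereas $\log(m_1\cdots m_{k-1})\approx n_k\log M$. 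In the regime of this lemma (say $\psi(n)=n^{\alpha}$, $1/2\le\alpha<1$) the requirement that $S_n/\phi(n)\to1$ along \emph{all} $n$ forces $\psi(n_{k+1})-\psi(n_k)\to0$, hence the spikes are dense enough that $\sum_{j\le k}\psi(n_j)\gg n_k$; the Falconer ratio then tends to $0$, not $1/2$. You sense this in your final paragraph (the unresolved ``bookkeeping''), and the intermediate claim that letting $M\to\infty$ drives the ratio to $1$ is also false for the minimal-gap version of Lemma \ref{Falconer's lemma} (the bounded-digit part alone gives at most $\tfrac{\log M}{2\log M}=\tfrac12$). The sparse-spike device is the right tool only when $\psi(n)\ll n^{1/2}$ (the full-dimension regime of Theorem \ref{thm-dimension-1}); here it cannot produce the lower bound as you set it up, because the forced large digits contribute length $a_{n_j}^{-2}\approx e^{-2\psi(n_j)}$ and no choices.

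The missing idea is that the entropy must come from the large digits themselves. The paper's construction has \emph{every} digit large: one defines $d_nd_{n+1}=\phi(n)-\phi(n-1)$ and allows $a_n$ to range over $[d_n,(1+1/\psi(n))d_n]$, so that each consecutive product $a_na_{n+1}$ realises the increment of $\phi$ and the sum telescopes, giving membership in $E(\phi)$ directly. The hypotheses \eqref{conditions-on-psi-n} — in particular the condition $(x_n-x_{n-1})/x_n^2\to0$, which plays no role in your sketch — are used precisely to show that the alternating recursion for $d_n$ stabilises, $d_n=e^{\frac{\psi(n)}{2}(1+o(1))}$. Then each level contributes about $d_n/\psi(n)=e^{\frac{\psi(n)}{2}(1+o(1))}$ children against a length/gap cost $e^{-\psi(n)(1+o(1))}$, and Lemma \ref{Falconer's lemma} gives
\begin{equation*}
\liminf_{n\to\infty}\frac{\tfrac12\sum_{k\le n-1}\psi(k)}{\psi(n)+2\sum_{k\le n-1}\psi(k)}=\tfrac12 .
\end{equation*}
If you wish to salvage a spike-type construction, you would have to let the spike digit range over a window of size proportional to the increment $\phi(n_k)-\phi(n_{k-1})$ (with proportionality factor tending to $0$ so the sum still tracks $\phi$); only then does each spike contribute $\approx\psi(n_k)$ to the numerator against $2\psi(n_k)$ in the denominator. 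As written, your proposal omits this, and the key quantitative verification that the ratio is at least $1/2$ is not carried out.
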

\begin{proof}
Let $d_1,d_2,\ldots$ be positive real numbers defined by $d_1=1$, $d_2=\phi(1)$ and
\begin{equation}\label{def-of-bn}
  d_nd_{n+1}=\phi(n)-\phi(n-1).
\end{equation}
for $n\ge 2$.
 Let $g(t)=\log\frac{e^t-1}{t}$ for $t>0$. Then
\[\lim\limits_{t\to 0}g'(t)=1/2.\]
Thus we have
\[g(x_n)-g(x_{n-1})=(x_n-x_{n-1})\left(\frac{1}{2}+o(1)\right)\]
and hence
\begin{equation}\label{estimate-by-g}
  \frac{e^{x_n}-1}{e^{x_{n-1}}-1}=\frac{x_n}{x_{n-1}}e^{\frac{x_n-x_{n-1}}{2}(1+o(1))}.
\end{equation}
Then by the successive use of \eqref{estimate-by-g}, we have
$$
\begin{aligned}
  \frac{d_{n+1}}{d_{n-1}}&=\frac{d_nd_{n+1}}{d_{n-1}d_n}=\frac{\phi(n)-\phi(n-1)}{\phi(n-1)-\phi(n-2)}\\ &=\frac{e^{\psi(n)}-e^{\psi(n-1)}}{e^{\psi(n-1)}-e^{\psi(n-2)}}
\\ &  =\frac{e^{\psi(n-1)}(e^{x_n}-1)}{e^{\psi(n-2)}(e^{x_{n-1}}-1)}\\
  &=e^{x_{n-1}}\frac{x_n}{x_{n-1}}e^{\frac{x_n-x_{n-1}}{2}(1+o(1))}\\ &=\frac{x_n}{x_{n-1}}e^{\frac{x_n+x_{n-1}}{2}(1+o(1))}\\
  &=e^{\frac{x_n-x_{n-1}}{x_{n}}(1+o(1))}e^{\frac{x_n+x_{n-1}}{2}(1+o(1))}.
\end{aligned}$$
From \eqref{conditions-on-psi-n}, it follows that $  \frac{x_n-x_{n-1}}{x_n}= o(\frac{x_n+x_{n-1}}{2})$. Combining this estimate with the above estimate, we get

$$
\begin{aligned}
  \frac{d_{n+1}}{d_{n-1}}&=e^{\frac{x_n-x_{n-1}}{x_{n}}(1+o(1))}e^{\frac{x_n+x_{n-1}}{2}(1+o(1))}\\ &=e^{\frac{x_n+x_{n-1}}{2}(1+o(1))}=e^{\frac{\psi(n)-\psi(n-2)}{2}(1+o(1))}.
\end{aligned}$$
So we have
\begin{equation}\label{estimate of bn/bn-1}
  \frac{d_n}{d_{n-2}}=e^{\frac{\psi(n-1)-\psi(n-3)}{2}(1+o(1))}.
\end{equation}
If $n$ is even, then by \eqref{estimate of bn/bn-1} and \eqref{conditions-on-psi-n},
$$
  d_n=\frac{d_n}{d_{n-2}}\cdot \frac{d_{n-2}}{d_{n-4}}\cdots \frac{d_4}{d_2}d_2= e^{\frac{\psi(n-1)}{2}(1+o(1))}=e^{\frac{\psi(n)}{2}(1+o(1))}.
$$
Similarly, if $n$ is odd, we still have
\begin{equation}\label{estimate of bn}
    d_n=e^{\frac{\psi(n)}{2}(1+o(1))}.
\end{equation}
Take an integer $N\ge1$ large enough such that
\begin{equation}\label{N-def-large}
d_n\ge2,  \ \ \frac{d_n}{\psi(n)}\ge 2
\end{equation}
for all $n\ge N$.
Let
\[E=\left\{x\in[0,1]: a_n(x)=1\textmd{ for }n\le N, d_{n}\le a_{n}(x)\le \left(1+\frac{1}{\psi(n)}\right)d_{n}\textmd{ for }n>N \right\}.\]
Then by the definition of $d_n$, we have
\[E\subset E(\phi).\]
For any $n\ge N$ and any positive integers $a_1,\ldots,a_n$, let
\[J(a_1,\ldots,a_n):=\textmd{cl} \bigcup\limits_{a_{n+1}}I(a_1,\ldots,a_n,a_{n+1}),\]
where the union is taken over all integers $a_{n+1}$ such that
\[d_{n+1}\le a_{n+1}\le \left(1+\frac{1}{\psi(n+1)}\right)d_{n+1}.\]
and $\textmd{cl}$ stands for the closure of a set in $\mathbb{R}$.
 Let
$a_1=a_2\cdots=a_N=1$ and let
\[E_n=\bigcup\limits_{a_{N+1},\ldots,a_{N+n}}J(a_1,a_2,\ldots,a_{N+n})\]
for $n\ge1$, where the union is taken over all integers $a_{N+1},\ldots,a_{N+n}$ such that
\[d_{N+i}\le a_{N+i}\le \left(1+\frac{1}{\psi(N+i)}\right)d_{N+i}\]
for all $1\le i\le n$. Then
\[E=\bigcap\limits_{n\ge1}E_n.\]
Set
\[m_n:=\#\left\{k\in \mathbb{Z}: d_{N+n}\le k\le \left(1+\frac{1}{\psi(N+n)}\right)d_{N+n}\right\}.\]
Then each interval of $E_{n-1}$ contains $m_n$ disjoint intervals of $E_n$.
By  \eqref{estimate of bn} and \eqref{N-def-large}, we have
\begin{equation}\label{estimate of mn}
2\le m_n=e^{\frac{\psi(N+n)}{2}(1+o(1))}.
\end{equation}
Note that for any two adjacent intervals $J(a_1,\ldots,a_{N+n-1},d)$ and $J(a_1,\ldots,a_{N+n-1},d+1)$ of $E_n$,
either $I(a_1,\ldots,a_{N+n-1},d,1)$ or $I(a_1,\ldots,a_{N+n-1},d+1,1)$ is contained in the gap between the two adjacent intervals.
Write
\[\theta_n:=\min\limits_{a_{N+1},\ldots,a_{N+n}}|I(a_1,\cdots,a_{N+n},1)|,\]
where the minimum is taken over all integers $a_{N+1},\ldots,a_{N+n}$ with
\[d_{N+i}\le a_{N+i}\le \left(1+\frac{1}{\psi(N+i)}\right)d_{N+i}\]
for all $1\le i\le n$.
By \eqref{estimate of length of cylinder by partial quotients}, we have \[|I(a_1,\cdots,a_{N+n},1)|\ge 2^{-2(N+n+2)}(a_{N+1}\cdots a_{N+n})^{-2}.\]
Thus, by \eqref{estimate of bn}, we obtain
\begin{equation}\label{epsilon-n}
 \theta_n\ge \epsilon_n:=2^{-2(N+n+2)}\prod\limits_{k=N+1}^{N+n}\left(\left(1+\frac{1}{\psi(N+i)}\right)d_{N+i}\right)^{-2}=e^{-\sum_{k=N+1}^{N+n}\psi(k)(1+o(1))}.
\end{equation}
This means that the disjoint intervals of $E_n$ are separated by gaps of at least $\epsilon_n$.
Then by Lemma \ref{Falconer's lemma} combined with the estimates \eqref{conditions-on-psi-n}, \eqref{estimate of mn} and \eqref{epsilon-n},
$$\begin{aligned}
  \dim_{\mathcal H} E&\ge \liminf\limits_{n\to\infty}\frac{m_1\cdots m_{n-1}}{-\log(\epsilon_n m_n)}\\ &=\liminf\limits_{n\to\infty}\frac{\frac{1}{2}\sum_{k=N+1}^{N+n-1}\psi(k)(1+o(1))}{-\frac{\psi(N+n)}{2}(1+o(1))
  +\sum_{k=N+1}^{N+n}\psi(k)(1+o(1))}\\
  &=\liminf\limits_{n\to\infty}\frac{\sum_{k=1}^{n-1}\psi(k)}{
  \psi(n)+2\sum_{k=1}^{n-1}\psi(k)}\\ &=1/2,
\end{aligned}
$$
which completes the proof.
\end{proof}

The next lemma gives the lower bound for Theorem \ref{lower-bound-1/1+alpha}.

\begin{lem}\label{lower-bound-1/1+alpha}
Suppose that $\psi(n+1)-\psi(n)$ is monotonically increasing with
\begin{equation}\label{difference-ratio}
  \lim\limits_{n\to\infty}\frac{\psi(n+1)-\psi(n)}{\psi(n)-\psi(n-1)}=\alpha.
\end{equation}
Then  $\dim_{\mathcal H} E(\phi)\ge \frac{1}{1+\alpha}$.
\end{lem}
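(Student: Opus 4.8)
The plan is to follow the same Cantor set construction and dimension estimate via Lemma \ref{Falconer's lemma} as in the proof of Lemma \ref{lower-bound-1/2}, but now using the fact that, under assumption \eqref{difference-ratio}, the increments $\psi(n+1)-\psi(n)$ grow geometrically with ratio tending to $\alpha>1$, so that $\psi(n)$ itself is, up to lower order terms, comparable to $\psi(n+1)-\psi(n)$ divided by $\alpha-1$, and in particular $\psi(n) = (\psi(n)-\psi(n-1))\cdot\frac{1}{\alpha-1}(1+o(1))$ and $\psi(n-1)=\frac{1}{\alpha}\psi(n)(1+o(1))$. First I would define $d_1,d_2,\ldots$ exactly as in \eqref{def-of-bn}, i.e. $d_1=1$, $d_2=\phi(1)$, and $d_nd_{n+1}=\phi(n)-\phi(n-1)$ for $n\ge2$. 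As before $\phi(n)-\phi(n-1)=e^{\psi(n-1)}(e^{x_n}-1)$ with $x_n=\psi(n)-\psi(n-1)$; but now $x_n\to\infty$, so $e^{x_n}-1=e^{x_n}(1+o(1))$ and the telescoping gives
\[
\frac{d_{n+1}}{d_{n-1}}=\frac{\phi(n)-\phi(n-1)}{\phi(n-1)-\phi(n-2)}=e^{\psi(n)-\psi(n-2)}(1+o(1)),
\]
hence $\log d_n = \tfrac12(\psi(n-1)+\psi(n-3)+\cdots)(1+o(1))$. Because the increments grow geometrically with ratio $\alpha$, the even (or odd) partial sums of $\psi$ along a step-2 subsequence are asymptotically $\frac{1}{1+\alpha^{-1}}\psi(n)=\frac{\alpha}{\alpha+1}\psi(n)$ times a constant; more precisely I would show $\log d_n=\frac{1}{2}\cdot\frac{\alpha+1}{\alpha}\cdot\psi(n)\cdot\frac{1}{?}$— the cleanest route is to observe directly that $d_n d_{n+1}=e^{\psi(n)}(1+o(1))$ and $d_{n+1}/d_n\to\infty$ is not needed; instead $\log d_{n+1}-\log d_{n-1}=\psi(n)-\psi(n-2)+o(\psi(n))$ together with $\psi(n-2)=\alpha^{-2}\psi(n)(1+o(1))$ yields, upon summing the geometric-type series, $\log d_n=\psi(n)\cdot\frac{1}{\alpha+1}(1+o(1))$ when we also use $\log d_n+\log d_{n+1}=\psi(n)(1+o(1))$ and $\log d_{n+1}=\alpha\log d_n(1+o(1))$. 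I would carry out this bookkeeping carefully to arrive at
\[
d_n=e^{\frac{\psi(n)}{\alpha+1}(1+o(1))},\qquad d_{n+1}=e^{\frac{\alpha\,\psi(n)}{\alpha+1}(1+o(1))}.
\]

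Next, mimicking Lemma \ref{lower-bound-1/2}, I would fix $N$ large so that $d_n\ge2$ and $d_n/\psi(n)\ge2$ for $n\ge N$, and set
\[
E=\Big\{x\in[0,1]: a_n(x)=1\ \text{for }n\le N,\ d_n\le a_n(x)\le\big(1+\tfrac{1}{\psi(n)}\big)d_n\ \text{for }n>N\Big\}\subset E(\phi),
\]
with the associated Cantor levels $E_n$, branching numbers $m_n=\#\{k: d_{N+n}\le k\le(1+\psi(N+n)^{-1})d_{N+n}\}$, so that $m_n=d_{N+n}/\psi(N+n)\cdot(1+o(1))=e^{\frac{\psi(N+n)}{\alpha+1}(1+o(1))}$ (here I use $d_n/\psi(n)\to\infty$ to guarantee $m_n\ge2$ and that the $(1+o(1))$ absorbs the division by $\psi$), and gaps bounded below by $\epsilon_n\ge 2^{-2(N+n+2)}\prod_{k=N+1}^{N+n}\big((1+\psi(N+k)^{-1})d_{N+k}\big)^{-2}=e^{-\frac{2}{\alpha+1}\sum_{k=N+1}^{N+n}\psi(k)(1+o(1))}$. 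Then Lemma \ref{Falconer's lemma} gives
\[
\dim_{\mathcal H}E\ \ge\ \liminf_{n\to\infty}\frac{\log(m_1\cdots m_{n-1})}{-\log(m_n\epsilon_n)}
=\liminf_{n\to\infty}\frac{\frac{1}{\alpha+1}\sum_{k=1}^{n-1}\psi(N+k)}{\frac{1}{\alpha+1}\psi(N+n)+\frac{2}{\alpha+1}\sum_{k=1}^{n-1}\psi(N+k)},
\]
and since $\psi(n+1)-\psi(n)$ grows with ratio $\to\alpha>1$, the tail dominates: $\sum_{k=1}^{n-1}\psi(N+k)=\psi(N+n)\cdot\frac{1}{\alpha-1}(1+o(1))$, whence $\psi(N+n)=(\alpha-1)\sum_{k=1}^{n-1}\psi(N+k)(1+o(1))$ and the ratio above tends to $\frac{1}{(\alpha-1)+2}=\frac{1}{\alpha+1}$. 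This yields $\dim_{\mathcal H}E(\phi)\ge\frac{1}{1+\alpha}$.

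The main obstacle I anticipate is the asymptotic bookkeeping for $\log d_n$: unlike the subexponential regime of Lemma \ref{lower-bound-1/2}, here $x_n=\psi(n)-\psi(n-1)\to\infty$, so the factor $e^{x_n}-1$ is essentially $e^{x_n}$ and the "$\tfrac12$ from $g'(0)$" phenomenon disappears; instead the $\tfrac12$ in $d_n=e^{\frac{\psi(n)}{\alpha+1}(1+o(1))}$ must be extracted from the interplay between the two relations $\log d_n+\log d_{n+1}=\psi(n)(1+o(1))$ and $\log d_{n+1}/\log d_n\to\alpha$, i.e. from solving $u_n+u_{n+1}=\psi(n)(1+o(1))$, $u_{n+1}=\alpha u_n(1+o(1))$, giving $u_n=\frac{\psi(n)}{1+\alpha}(1+o(1))$. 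I would need to be careful that the $o(1)$ errors accumulated over $n$ steps remain $o(\psi(n))$ when summed, which follows because $\psi$ grows geometrically so the partial sums are dominated by their last term; the monotonicity of $\psi(n+1)-\psi(n)$ assumed in the hypothesis is exactly what makes these tail-domination estimates clean. Everything else is a routine transcription of the argument in Lemma \ref{lower-bound-1/2}.
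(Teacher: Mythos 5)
Your construction is exactly the paper's: the same sequence $d_n$ from \eqref{def-of-bn}, the same Cantor set $\{d_n\le a_n(x)\le(1+1/\psi(n))d_n\}$, Lemma \ref{Falconer's lemma}, and the correct observation that the factor $\frac{1}{1+\alpha}$ cancels in the final ratio so that the dimension comes from comparing $\psi(N+n)$ with $\sum_{k=1}^{n-1}\psi(N+k)$. The genuine gap is in the step you yourself flag as the main obstacle: the asymptotic $d_n=e^{\frac{\psi(n)}{1+\alpha}(1+o(1))}$ is never actually derived, and the intermediate identities you offer are wrong or circular. When $x_n=\psi(n)-\psi(n-1)\to\infty$ one has $\frac{d_{n+1}}{d_{n-1}}=\frac{\phi(n)-\phi(n-1)}{\phi(n-1)-\phi(n-2)}=e^{\psi(n)-\psi(n-1)+O(1)}$, \emph{not} $e^{\psi(n)-\psi(n-2)}(1+o(1))$; and the repaired version ``$\log d_{n+1}-\log d_{n-1}=\psi(n)-\psi(n-2)+o(\psi(n))$'' is false for $\alpha>1$, since $\psi(n-1)-\psi(n-2)\asymp\frac{\alpha-1}{\alpha^{2}}\psi(n)$ is comparable to $\psi(n)$; telescoping it would force $\log d_{n+1}=\psi(n)(1+o(1))$, contradicting $\log d_n+\log d_{n+1}=\psi(n)(1+o(1))$. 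Likewise the relation $\log d_{n+1}=\alpha\log d_n(1+o(1))$, which you feed into the two-equation system, is essentially the statement to be proved, and the system $u_n+u_{n+1}=\psi(n)(1+o(1))$ alone does not yield it without a further argument. What the paper does at this point is telescope the correct identity in steps of two, $\log d_n=\log d_2+\big(\psi(n-1)-\psi(n-2)\big)+\big(\psi(n-3)-\psi(n-4)\big)+\cdots+O(n)$, and then sum this geometric-type series of increments using \eqref{difference-ratio} to get $\frac{\psi(n)}{1+\alpha}(1+o(1))$. (Alternatively, your route can be salvaged for $\alpha>1$ by noting $\psi(n+1)/\psi(n)\to\alpha$ and running a contraction argument on $v_n=\log d_n/\psi(n)$ via $v_{n+1}=\frac{1-v_n}{\alpha}+o(1)$, but some such argument must be supplied.)

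The second problem is coverage: your proof uses $\alpha>1$ throughout — you need $x_n\to\infty$ to write $e^{x_n}-1=e^{x_n}(1+o(1))$, and you need $\sum_{k=1}^{n-1}\psi(N+k)=\frac{\psi(N+n)}{\alpha-1}(1+o(1))$ at the end — whereas the lemma only assumes the limit in \eqref{difference-ratio} exists (monotonicity forces $\alpha\ge1$), and the paper invokes it with $\alpha=1$ in the proof of Theorem \ref{thm-dimension-1/2} for $\phi(n)=e^{n^{\alpha}}$, $\alpha\ge1$. When $\alpha=1$ and the increments stay bounded (e.g. $\psi(n)=cn$) your very first simplification fails, which is why the paper splits into the cases $c:=\lim(\psi(n+1)-\psi(n))<\infty$ and $c=\infty$; and in the final Falconer ratio one then uses $\psi(N+n)=o\big(\sum_{k=1}^{n-1}\psi(N+k)\big)$ rather than your $\frac{1}{\alpha-1}$ formula. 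As written, your argument would prove the lemma only for $\alpha>1$ (sufficient for Theorem \ref{thm-dimension-1/1+alpha}, but not for the lemma as stated and used).
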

\begin{proof}
Since $\psi(n+1)-\psi(n)$ is monotonically increasing, the limit  $c:=\lim\limits_{n\to\infty}\psi(n+1)-\psi(n)$ exists.  Let $d_n$ be defined as in \eqref{def-of-bn}.
 We distinguish two cases.
\smallskip

\noindent {\em Case 1.} $0<c<\infty.$ Then $\psi(n)=cn(1+o(1))$ and $\alpha=1$. Then
\[\begin{aligned}
  \frac{d_{n+1}}{d_{n-1}}&=\frac{d_nd_{n+1}}{d_{n-1}d_n}\\ &=\frac{e^{\psi(n)}-e^{\psi(n-1)}}{e^{\psi(n-1)}-e^{\psi(n-2)}}
 \\ & =\frac{e^{\psi(n)}(1-e^{\psi(n-1)-\psi(n)})}{e^{\psi(n-1)}(1-e^{\psi(n-2)-\psi(n-1)})}\\
  &=e^{\psi(n)-\psi(n-1)+o(1)}\\ &=e^{c+o(1)},
\end{aligned}\]
which implies that
\[d_n=e^{\frac{cn}{2}(1+o(1))}=e^{\frac{\psi(n)}{2}(1+o(1))}=e^{\frac{\psi(n)}{1+\alpha}(1+o(1))}.\]

\smallskip

\noindent {\em Case 2.} $c=\infty.$ Then we have
\[\begin{aligned}
  \frac{d_{n+1}}{d_{n-1}}&=\frac{e^{\psi(n)}(1-e^{\psi(n-1)-\psi(n)})}{e^{\psi(n-1)}(1-e^{\psi(n-2)-\psi(n-1)})}
  &=e^{\psi(n)-\psi(n-1)+O(1)}.
\end{aligned}\]
If $n$ is even, then
\[\begin{aligned}
  d_n&=\frac{d_n}{d_{n-2}}\cdot \frac{d_{n-2}}{d_{n-4}}\cdots \frac{d_4}{d_2}d_2\\ &= d_2 e^{\psi(n-1)-\psi(n-2)+\cdots+\psi(3)-\psi(2)+O(n)}\\ &=e^{\frac{\psi(n)}{1+\alpha}(1+o(1))}
\end{aligned}\]
by \eqref{difference-ratio}. If $n$ is odd, we can similarly get
  $$d_n=e^{\frac{\psi(n)}{1+\alpha}(1+o(1))}.$$
So in both cases, we get the same estimate for $d_n$. As in the proof of Lemma \ref{lower-bound-1/2}, we similarly define the Cantor subset  $E$ of $E(\phi)$ by $d_n$ and $\psi(n)$. Then
$$\dim_{\mathcal H} E\ge  \liminf\limits_{n\to\infty}\frac{\sum_{k=1}^{n-1}\psi(n)}{
  \psi(n)+2\sum_{k=1}^{n-1}\psi(k)}=\frac{1}{1+\alpha}.$$

\end{proof}

\subsection{The upper bounds}
\begin{lem}\label{numbers-estimate}
For any positive integer $n\ge2$, let
$$\delta(n):=\#\{(a,b)\in \mathbb{N}\times\mathbb{N}:ab=n\}.$$
For any $\epsilon>0$, there exists a constant $c_{\epsilon}$ depending on $\epsilon$ such that $$\delta(n)\leq c_{\epsilon} n^{\epsilon}$$
for all integers $n\ge2$.
\end{lem}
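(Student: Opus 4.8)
The plan is to identify $\delta(n)$ with the classical divisor function and then estimate the ratio $\delta(n)/n^{\epsilon}$ one prime at a time. First, if $n=p_1^{a_1}\cdots p_r^{a_r}$ is the prime factorisation of $n$, then every divisor of $n$ is obtained by choosing an exponent $0\le b_i\le a_i$ for each $i$, so $\delta(n)=\prod_{i=1}^{r}(a_i+1)$; in particular $\delta$ is multiplicative.

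Next I would write
\[\frac{\delta(n)}{n^{\epsilon}}=\prod_{i=1}^{r}\frac{a_i+1}{p_i^{\epsilon a_i}}\]
and bound each factor separately, distinguishing the ``large'' and ``small'' primes. For any prime $p_i\ge 2^{1/\epsilon}$ one has $p_i^{\epsilon a_i}\ge 2^{a_i}\ge a_i+1$, so such factors are at most $1$ and may simply be discarded. The primes $p_i<2^{1/\epsilon}$ are finitely many (their number depends only on $\epsilon$), and for each fixed such prime $p$ the function $t\mapsto (t+1)p^{-\epsilon t}$ on $[0,\infty)$ tends to $0$ as $t\to\infty$ (the exponential decay of $p^{-\epsilon t}$ beats the linear growth of $t+1$), hence is bounded by some finite constant $C_p(\epsilon)$. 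Taking $c_{\epsilon}:=\prod_{p<2^{1/\epsilon}}C_p(\epsilon)$, a finite product depending only on $\epsilon$, we obtain $\delta(n)\le c_{\epsilon}n^{\epsilon}$ for all $n\ge 2$.

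There is essentially no obstacle in this argument; the only point worth a word of care is the uniform-in-$a$ bound on $(a+1)p^{-\epsilon a}$ for the finitely many small primes, which is immediate from elementary calculus (or from an inequality of the shape $a+1\le C(1+\eta)^{a}$, valid for all $a\ge 0$ with $C$ depending on $\eta>0$, applied with $\eta$ chosen so that $(1+\eta)\le p^{\epsilon}$). I note that the cruder pairing of divisors $d\le\sqrt n$ with $d>\sqrt n$ only yields $\delta(n)\ll\sqrt n$, which is far too weak here, so the prime-factorisation computation above is the natural route to the sharp exponent $\epsilon$.
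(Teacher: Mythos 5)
Your proof is correct and follows essentially the same strategy as the paper: express $\delta(n)=\prod_i(a_i+1)$ via the prime factorisation, discard the factors coming from primes $p$ with $p^\epsilon\geq 2$ using $2^a\geq a+1$, and absorb the finitely many remaining small-prime factors into a constant $c_\epsilon$. The only cosmetic difference is that the paper produces a single uniform bound $2^{l_0}$ for the small-prime factors (via an auxiliary integer $l_0$ with $2^{\epsilon l}\geq l+1$ for $l\geq l_0$), whereas you take a per-prime supremum $C_p(\epsilon)$ and multiply them; both are immediate and give the same conclusion.
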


\begin{proof}
Let $p_i$ be the $i$-th prime number, i.e.,
$$p_1=2, p_2=3, p_3=5, \ldots.$$
Let $M\geq 1$ be the smallest integer such that
$$p_M^{\epsilon}\geq 2$$
and $l_0$ be the smallest integer such that
$$2^{\epsilon l}\geq l+1$$ for all $l\geq l_0$.
Write
$$n=p_{i_1}^{k_1}p_{i_2}^{k_2}\cdots p_{i_m}^{k_m}$$
for some positive integers $i_1<i_2<\cdots <i_m$. Then
$$\delta(n)=(k_1+1)(k_2+1)\cdots(k_m+1).$$
If $i_j> M$, we have
\[k_j+1\leq 2^{k_j}\le p_{M}^{\epsilon k_j}\leq p_{i_j}^{\epsilon k_j}.\]
If $i_j\leq M$, we distinguish two cases.
\begin{enumerate}
    \item[(i)] $k_j\leq l_0$, then $k_j+1\leq l_0+1\leq 2^{ l_0}\le 2^{ l_0}p_{i_j}^{\epsilon k_j}$.
    \item[(ii)] $k_j> l_0$, then $k_j+1\leq 2^{\epsilon k_j}\leq p_{i_j}^{\epsilon k_j}\le 2^{ l_0} p_{i_j}^{\epsilon k_j}$.
\end{enumerate}
Now we divide $p_{i_1},p_{i_2},\ldots, p_{i_m}$ into two parts by $p_M$ as follows
$$p_{i_1}<\cdots<p_{i_{q}}\le p_{M}, \ p_M<p_{i_{q+1}}<\cdots<p_{i_m}.$$
Then we have $q\le M$,
$$\prod\limits_{j=1}^{q}(k_j+1)\leq \prod\limits_{j=1}^{q}2^{ l_0}p_{i_j}^{\epsilon k_j}
\leq 2^{M l_0}\prod\limits_{j=1}^{q}p_{i_j}^{\epsilon k_j}$$
and
$$\prod\limits_{j=q+1}^{m}(k_j+1)\leq \prod\limits_{j=q+1}^{m}p_{i_j}^{\epsilon k_j}.$$
It follows that
$$\delta(n)=\prod\limits_{j=1}^{m}(k_j+1)\leq c\prod\limits_{j=1}^{m}p_{i_j}^{\epsilon k_j}=cn^{\epsilon},$$
where $c=2^{Ml_0}$.
\end{proof}

We quote Lemma 2.1 from \cite{LiaoRams} that we will use in proving Lemma \ref{upper-bound-1/2-0} below.

\begin{lem}\label{upper-estimate-product-sum}
 For any $s\in(1/2,1)$, for all $m\ge n\ge1$, we have
 \[\sum\limits_{(i_1,\ldots,i_n)\in\Gamma_n(m)}\prod\limits_{k=1}^n i_k^{-2s}\le \left(\frac{9}{2}(2+\zeta(2s))\right)^n m^{-2s}.\]
where $$\Gamma_n(m):=\{(i_1,\ldots,i_n)\in \mathbb{Z}_{+}^n: i_1+\cdots+i_n=m\}.$$
\end{lem}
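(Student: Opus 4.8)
This statement appears in the paper as Lemma~2.1 of \cite{LiaoRams}, so one may simply invoke it; here is how I would prove it directly. The plan is an induction on $n$ that reduces the whole estimate to a single convolution bound. Since $\Gamma_n(m)=\varnothing$ whenever $m<n$, it is harmless to prove the inequality for all $m\ge1$, the case $n=1$ being immediate because $\Gamma_1(m)=\{(m)\}$ and $\tfrac92(2+\zeta(2s))\ge1$. Writing $C:=\tfrac92(2+\zeta(2s))$ and grouping the compositions of $m$ by the value of the first coordinate,
\[
\sum_{(i_1,\dots,i_n)\in\Gamma_n(m)}\prod_{k=1}^n i_k^{-2s}
=\sum_{i_1=1}^{m-1} i_1^{-2s}\sum_{(i_2,\dots,i_n)\in\Gamma_{n-1}(m-i_1)}\prod_{k=2}^n i_k^{-2s}
\le C^{\,n-1}\sum_{j=1}^{m-1} j^{-2s}(m-j)^{-2s},
\]
by the inductive hypothesis (terms with $m-i_1<n-1$ contribute $0$). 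Hence the lemma follows once one establishes the \emph{key convolution estimate}
\[
\Sigma(m):=\sum_{j=1}^{m-1} j^{-2s}(m-j)^{-2s}\le C\,m^{-2s}\qquad(m\ge2).
\]

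To prove this I would dispatch $m\in\{2,3\}$ by inspection ($\Sigma(2)=1$, $\Sigma(3)=2^{1-2s}$, both far below $Cm^{-2s}$) and, for $m\ge4$, split $\Sigma(m)$ into the two extreme terms $j=1$ and $j=m-1$, contributing $2(m-1)^{-2s}\le 2(3/2)^{2s}m^{-2s}\le\tfrac92 m^{-2s}$ because $2s<2$, plus the bulk $2\le j\le m-2$. On the bulk one has $\max(j,m-j)\ge m/2$, hence
\[
j^{-2s}(m-j)^{-2s}\le \bigl(m/2\bigr)^{-2s}\bigl(\min(j,m-j)\bigr)^{-2s}=2^{2s}\,m^{-2s}\bigl(\min(j,m-j)\bigr)^{-2s};
\]
since each integer $k\in\{2,\dots,\lfloor m/2\rfloor\}$ occurs as $\min(j,m-j)$ for at most two values of $j$, summing yields a bulk contribution at most $2^{2s+1}\bigl(\zeta(2s)-1\bigr)m^{-2s}$. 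Combining, $\Sigma(m)\le\bigl(\tfrac92+2^{2s+1}(\zeta(2s)-1)\bigr)m^{-2s}$, so it remains to check the elementary numerical inequality
\[
\tfrac92+2^{2s+1}\bigl(\zeta(2s)-1\bigr)\le \tfrac92\bigl(2+\zeta(2s)\bigr),
\qquad\text{equivalently}\qquad \bigl(2^{2s+1}-\tfrac92\bigr)\zeta(2s)\le 2^{2s+1}+\tfrac92 .
\]
This is trivial when $2^{2s+1}\le\tfrac92$, and otherwise follows from the crude bound $\zeta(2s)\le\tfrac{2s}{2s-1}$ together with $1<2s<2$, after a short split into two or three cases on the size of $s$.

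The only delicate point is matching the explicit constant: since $\zeta(2s)\to\infty$ as $s\to1/2^+$, a wasteful treatment of the convolution would fail, and the device of isolating the two boundary terms $j=1,\,m-1$ (which produce the constant $\tfrac92$) from the bulk (which produces the $2^{2s+1}(\zeta(2s)-1)$ term) is precisely what makes the closing numerical inequality hold uniformly on $s\in(1/2,1)$. Everything else — the induction, the single‑coordinate peeling, and the multiplicity count in the bulk sum — is routine bookkeeping.
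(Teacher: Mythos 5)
The paper offers no proof of this lemma at all: it is quoted verbatim as Lemma~2.1 of Liao--Rams, so there is nothing in the paper to compare against line by line. Your reconstruction is, however, correct and is exactly the expected argument behind such a statement: induction on $n$ reduces everything to the one-step convolution bound
\[
\Sigma(m)=\sum_{j=1}^{m-1} j^{-2s}(m-j)^{-2s}\le \tfrac92\bigl(2+\zeta(2s)\bigr)\,m^{-2s},
\]
and you establish this by isolating the two boundary terms $j=1,\,m-1$ (which carry the $\tfrac92$) and controlling the interior by $\max(j,m-j)\ge m/2$ together with the multiplicity-two observation on $\min(j,m-j)$, giving the $2^{2s+1}(\zeta(2s)-1)$ term. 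I verified the bookkeeping: the base case, the peeling step, the small cases $m=2,3$, the bound $2(m-1)^{-2s}\le 2(4/3)^{2s}m^{-2s}<\tfrac92 m^{-2s}$ for $m\ge4$, and the bulk count are all sound.

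The one place you wave your hands is the closing numerical inequality
\[
\bigl(2^{2s+1}-\tfrac92\bigr)\zeta(2s)\le 2^{2s+1}+\tfrac92,
\]
which you assert ``after a short split into two or three cases.'' It does hold: when $2^{2s+1}\le\tfrac92$ (roughly $s\le0.585$) the left side is nonpositive; otherwise using $\zeta(2s)\le\tfrac{2s}{2s-1}$ one checks that the left side, regarded as a function of $t=2s\in(1.17,2)$, stays below $7$ while the right side stays above $9$, so the inequality is comfortably uniform. Since the published argument you are reproducing is in Liao--Rams and not in this paper, I cannot say whether their exact constant-splitting is identical to yours, but the strategy (induction plus boundary/bulk decomposition tuned to make the $\zeta(2s)$ blow-up near $s=1/2$ harmless) is the same. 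In short: correct, self-contained, and a fair substitute for the external citation; the only improvement would be to spell out the final two-line verification of the numerical inequality rather than gesture at it.
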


\begin{lem}\label{upper-bound-1/2-0}
 Suppose that for any positive number $M>1$, there exists a subsequence $\{n_k\}$ of $\mathbb{N}$ such that
 \begin{equation}\label{condition-difference1}
 \liminf\limits_{k\to\infty}\psi(n_k)-\psi(n_{k-1}+1)>0,
 \end{equation}and
  \begin{equation}\label{condition-difference2}
n_k-n_{k-1}\ge2, \ \frac{\psi(n_k)}{n_k-n_{k-1}}\ge M
 \end{equation}
 holds for all $k$ large enough. Then we have
 \[\dim_{\mathcal H} E(\phi)\le 1/2.\]
\end{lem}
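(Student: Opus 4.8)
The plan is to fix $s \in (1/2, 1)$ arbitrarily close to $1/2$ and construct, for each $M > 1$, a natural cover of $E(\phi)$ by cylinders whose $s$-dimensional sum is bounded, thereby showing $\dim_{\mathcal H} E(\phi) \le s$; letting $s \downarrow 1/2$ and $M \to \infty$ then gives the conclusion. First I would observe that if $x \in E(\phi)$ then $S_n(x) \le 2\phi(n)$ for all $n$ large, so in particular, writing $b_i(x) = a_i(x)a_{i+1}(x)$, we have $\sum_{i=1}^{n_k} b_i(x) \le 2\phi(n_k)$ for all large $k$. The key point I want to extract is a constraint on the block of partial quotients sitting between consecutive indices $n_{k-1}$ and $n_k$: since $a_{n_{k-1}+1}(x) \cdots a_{n_k}(x)$ divides the product of the relevant $b_i$'s (up to bounded overlap), condition \eqref{condition-difference1} combined with $S_{n_k}(x) \le 2\phi(n_k)$ forces $\prod_{i=n_{k-1}+1}^{n_k} a_i(x)$ to be bounded above by roughly $\phi(n_k)^{O(1)} = e^{O(\psi(n_k))}$, while condition \eqref{condition-difference2} controls how long the block is relative to $\psi(n_k)$.

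Next I would set up the cover. For each large $k$, let $E(\phi)$ be contained in the union, over admissible choices of $(a_1, \ldots, a_{n_k})$, of the cylinders $I(a_1, \ldots, a_{n_k})$, where ``admissible'' encodes the constraint $\sum_{i=1}^{n_k} b_i(x) \le 2\phi(n_k)$ (for $x$ in the cylinder, or at least $S_{n_{k-1}}(x) \le 2\phi(n_{k-1})$ and the block product bound). Using \eqref{estimate of length of cylinder by partial quotients}, $|I(a_1,\ldots,a_{n_k})| \ll \prod_{i=1}^{n_k} a_i^{-2}$, so I need to bound $\sum \prod_{i=1}^{n_k} a_i^{-2s}$ over admissible tuples. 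I would split the product over the blocks $(n_{k-1}, n_k]$, and on each block apply Lemma \ref{upper-estimate-product-sum}: the number of positive-integer compositions of a given block-product value $m$ into $n_k - n_{k-1}$ parts, weighted by $\prod i_k^{-2s}$, is at most $(C_s)^{n_k - n_{k-1}} m^{-2s}$, and then I sum over $m$ up to $e^{O(\psi(n_k))}$, picking up a factor $\delta(m)$-type multiplicity controlled by Lemma \ref{numbers-estimate}. The exponential gain $(C_s)^{n_k-n_{k-1}}$ is dominated by the saving $m^{-2s}$ summed against the admissible range precisely because \eqref{condition-difference2} makes $n_k - n_{k-1}$ small compared to $\psi(n_k)$ — so the combinatorial factor $C_s^{n_k - n_{k-1}} = e^{O(n_k - n_{k-1})}$ is negligible against $e^{(2s-1)\psi(n_k)}$-type terms once $M$ is large enough.

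Carrying this through, I expect to get $\sum_{I} |I|^s \to 0$ (or at least stays bounded) as $k \to \infty$ along the subsequence, which yields $\mathcal H^s(E(\phi)) < \infty$, hence $\dim_{\mathcal H} E(\phi) \le s$; then $s \downarrow 1/2$ finishes it. The main obstacle, I anticipate, is the bookkeeping that converts the analytic constraint $S_{n_k}(x) \le 2\phi(n_k)$ into a clean bound on the block products $\prod_{i \in (n_{k-1}, n_k]} a_i(x)$ — one has to be careful that $b_i = a_i a_{i+1}$ couples adjacent partial quotients across block boundaries, and that the constraint only controls the \emph{sum} of the $b_i$, not each individually, so the worst case is one enormous $b_i$ inside the block; this is exactly where hypothesis \eqref{condition-difference1}, separating $\psi(n_k)$ from $\psi(n_{k-1}+1)$, is needed to guarantee that even the largest admissible block product is still only $e^{O(\psi(n_k))}$ and that the entropy of choices in the block stays under control. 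A secondary technical point is ensuring the $\limsup$/$\liminf$ over $k$ interacts correctly with the fact that we only cover at the scales $n_k$, which is handled because $E(\phi) \subseteq \bigcap_k (\text{the }k\text{-th cover})$ and the diameters of the covering cylinders tend to $0$.
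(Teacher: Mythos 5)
There is a genuine gap, and it is structural rather than a matter of bookkeeping: your cover is built only on the upper bound $S_{n}(x)\le 2\phi(n)$, but that constraint by itself defines a set of Hausdorff dimension $1$ (indeed of full Lebesgue measure here), since every $x$ with bounded partial quotients satisfies $S_n(x)\ll n=o(\phi(n))$; no covering argument that uses only this inequality can force the exponent down to $1/2$. The dimension drop in the paper's proof comes entirely from the \emph{lower} bound $S_n(x)\ge(1-\alpha)e^{\psi(n)}$, and the actual role of \eqref{condition-difference1} is to make the two-sided bounds compatible across consecutive indices: it guarantees $c_1:=(1-\alpha)-(1+\alpha)e^{-\delta}>0$, so that each block sum $\sum_{j=n_k+2}^{n_{k+1}}a_j(x)a_{j+1}(x)$ lies between $c_1e^{\psi(n_{k+1})}$ and $c_2e^{\psi(n_{k+1})}$. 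It is precisely the largeness of $m=\sum_j b_j$ that, fed into Lemma \ref{upper-estimate-product-sum} (the bound $C^n m^{-2s}$), yields a factor of order $e^{-(2s-1-\epsilon)\psi(n_{k+1})}$ per block after summing over the at most $Ce^{\psi(n_{k+1})}$ admissible values of $m$, and \eqref{condition-difference2} with $M$ chosen large (depending on $s,\epsilon$) makes this decay beat the combinatorial factor $C^{n_{k+1}-n_k}$, so the product over blocks tends to $0$. In your version, summing $m^{-2s}$ over all $m$ up to $e^{O(\psi(n_k))}$ is dominated by the small values of $m$ and gives only a $\zeta(2s)$-type constant, so each block contributes a factor bounded below by a constant $>1$ raised to the block length and the covering sums diverge instead of vanishing. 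Relatedly, your heuristic is inverted: the ``one enormous $b_i$'' configuration you single out as the worst case is harmless for an upper bound on dimension (a huge partial quotient makes its cylinder tiny); the dangerous tuples are those with all $b_i$ small, and only the lower bound on $S_n$ excludes them. The upper bound you claim for the block product is also not what the sum constraint gives: from $\sum b_i\le 2\phi(n_k)$ one only gets $\prod b_i\le(2\phi(n_k))^{n_k-n_{k-1}}=e^{O((n_k-n_{k-1})\psi(n_k))}$, not $e^{O(\psi(n_k))}$.

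A second, smaller point: the overlap of $b_j=a_ja_{j+1}$ inside a block is not resolved by saying the block product of the $a_i$ ``divides the product of the relevant $b_i$'s''. The paper handles it by splitting the indices of a block into the two parity classes $I_{i,0}$ and $I_{i,1}$; at least one class carries at least half of the block sum, and on a single class the pairs $(a_j,a_{j+1})$ are pairwise disjoint, so one can pass from the variables $a_j$ to the variables $b_j$ at the cost of the divisor-function multiplicity $r_j\le c_\epsilon b_j^{\epsilon}$ from Lemma \ref{numbers-estimate}, absorbed into the exponent by taking $\epsilon<2s-1$. Some device of this kind is needed before Lemma \ref{upper-estimate-product-sum} can be applied at all, and it is missing from your outline.
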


\begin{proof}
It suffices to prove that for any $s>1/2$,
\[\dim_{\mathcal H} E(\phi)\le s.\]
Take $\epsilon\in(0,2s-1)$ and let
\begin{equation}\label{def-of-M}
  M=(2s-1-\epsilon)^{-1}\log \left(c_{\epsilon} \frac{9}{2}(2+\zeta(2s-\epsilon))\right),
\end{equation}
where $c_{\epsilon}$ is defined in Lemma \ref{numbers-estimate}. Then for $M$ defined by \eqref{def-of-M}, there exists $\{n_k\}$
such that \eqref{condition-difference1} and \eqref{condition-difference2}  hold.
Take $\delta>0$ such that
\[\psi(n_k)-\psi(n_{k-1}+1)\ge \delta\]
for all $k$ large enough and take
$0<\alpha<1$ such that
\[\frac{1+\alpha}{1-\alpha}<e^{\delta}.\]
For any $x\in E(\phi)$, we have
\[(1-\alpha)\le\frac{S_n(x)}{e^{\psi(n)}}\le(1+\alpha)\]
for all $n$ large enough. It follows that
\[\begin{aligned}(1-\alpha)e^{\psi(n_{k+1})}-(1+\alpha)e^{\psi(n_k+1)}&\le S_{n_{k+1}}(x)-S_{n_k+1}(x)\\ &\le (1+\alpha)e^{\psi(n_{k+1})}-(1-\alpha)e^{\psi(n_k+1)}\end{aligned}\] for all $k$ large enough.
Let $c_1=(1-\alpha)-(1+\alpha)e^{-\delta}$ and $c_2=1+\alpha$. Then
\begin{equation}\label{block-sum-estimate}
 c_1e^{\psi(n_{k+1})}\le \sum\limits_{j=n_k+2}^{n_{k+1}}a_j(x)a_{j+1}(x)\le c_2e^{\psi(n_{k+1})}
\end{equation}
for all $k$ large enough. Take $L\ge1$ such that \eqref{condition-difference1} and \eqref{condition-difference2} hold for all $k\ge L$.
Then for any $K\ge L$,
\[E(\phi)\subset \bigcup\limits_{K\ge L}~\bigcup\limits_{a_1,\ldots,a_{N_K+1}\ge1}I(a_1,\ldots,a_{N_K+1})\cap F_K(\phi),\]
where
\[F_K(\phi)=\{x\in[0,1]: \eqref{block-sum-estimate}\textmd{ holds for all }k\ge K\}.\]
It suffices to prove that for any $K\ge L$ and any positive integers $a_1,\ldots,a_{N_K+1}$,
\[\dim_{\mathcal H} I(a_1,\ldots,a_{N_K+1})\cap F_K(\phi)\le s.\]
For any $i\ge K$, let
\[A_i=\left\{(a_{n_i+2},\ldots,a_{n_{i+1}+1})\in\mathbb{N}^{n_{i+1}-n_{i}}: c_1e^{\psi(n_{i+1})}\le \sum\limits_{n=n_i+2}^{n_{i+1}}a_na_{n+1}\le c_2e^{\psi(n_{i+1})}\right\}.\]
Then for any $k\ge K$,
\[\{I(a_1,a_2,\ldots,a_{n_{k+1}+1}): (a_{n_i+2},\ldots,a_{n_{i+1}+1})\in A_i\textmd{ for all }K\le i\le k\}\]
is a cover of $I(a_1,\ldots,a_{N_K+1})\cap F_K(\phi)$. Then by \eqref{estimate of length of cylinder by partial quotients}, the $s$-dimensional Hausdorff measure $\mathcal H^s$ can be estimated as
$$\begin{aligned}
  \mathcal{H}^s\left(I(a_1,\ldots,a_{N_K+1})\cap F_K(\phi)\right)&\le \liminf\limits_{k\to\infty}\sum\limits_{(a_{n_i+2},\ldots,a_{n_{i+1}+1})\in A_i, K\le i\le k}|I(a_1,a_2,\ldots,a_{n_{k+1}+1})|^s\\
  &\le\liminf\limits_{k\to\infty}\prod\limits_{i=K}^k\sum\limits_{(a_{n_i+2},\ldots,a_{n_{i+1}+1})\in A_i}(a_{n_i+2}\cdots a_{n_{i+1}+1})^{-2s}\\
  &=\liminf\limits_{k\to\infty}\prod\limits_{i=K}^k\Lambda_i(s),
\end{aligned}$$
where
\[\Lambda_i(s):=\sum\limits_{(a_{n_i+2},\ldots,a_{n_{i+1}+1})\in A_i}(a_{n_i+2}\cdots a_{n_{i+1}+1})^{-2s}.\]
Next we shall estimate $\Lambda_i(s)$.
We divide the integers $n_{i}+2,n_i+3,\ldots,n_{i+1}$ into two parts:
\[I_{i, 0}:=\left\{n_i+2k: k\in \mathbb{Z}, 1\le k\le \frac{n_{i+1}-n_i}{2}\right\}\]
and
\[I_{i, 1}:=\left\{n_i+2k+1: k\in \mathbb{Z}, 1\le k\le \frac{n_{i+1}-n_i-1}{2}\right\}.\]
If $(a_{n_i+2},\ldots,a_{n_{i+1}+1})\in A_i$, then either
\begin{equation*}\label{sum-0-i}
  \frac{c_1}{2}e^{\psi(n_{i+1})}\le\sum\limits_{j\in I_{i, 0}}a_ja_{j+1}\le c_2e^{\psi(n_{i+1})}
\end{equation*}
or
\begin{equation}\label{sum-1-i}
  \frac{c_1}{2}e^{\psi(n_{i+1})}\le\sum\limits_{j\in I_{i, 1}}a_ja_{j+1}\le c_2e^{\psi(n_{i+1})}.
\end{equation}
We consider the case that $n_{i+1}-n_i-1$ is odd and \eqref{sum-1-i} holds. The proof of other cases is similar.
In this case,
\begin{equation}\label{I-1-i-estimate}
  \# I_{i, 1}=\frac{n_{i+1}-n_i}{2}-1\le \frac{n_{i+1}-n_i}{2}.
\end{equation}
Let $b_j=a_ja_{j+1}$ and $$r_j=\# \{(x,y)\in \mathbb{N}^2: xy=b_j\}.$$ Then \eqref{sum-1-i} implies that
\begin{equation}\label{sum-bj-estimate}
 \frac{c_1}{2}e^{\psi(n_{i+1})}\le \sum\limits_{j\in I_{i, 1}}b_j\le c_2e^{\psi(n_{i+1})}.
\end{equation}
Let $c_s=\sum\limits_{n\ge1}n^{-2s}$.
Then $$\prod\limits_{j\in I_{i, 1}}b_j=a_{n_i+3}\cdots a_{n_{i+1}}$$ and
$$\begin{aligned}
  \Lambda_i(s)&\le \sum\limits_{a_{n_i+2}\ge 1}\sum\limits_{a_{n_{i+1}+1}\ge1}a_{n_i+2}^{-2s}a_{n_{i+1}+1}^{-2s}\sum\limits_{(a_{n_i+2},\ldots,a_{n_{i+1}+1})\in A_i}(a_{n_i+3}\cdots a_{n_{i+1}})^{-2s}\\
  &\le c_s^2 \sum\limits_{(a_{n_i+2},\ldots,a_{n_{i+1}+1})\in A_i}(a_{n_i+3}\cdots a_{n_{i+1}})^{-2s}\\ &\le c_s^2\sum \prod\limits_{j\in I_{i, 1}} r_jb_j^{-2s},
\end{aligned}$$
where the last sum is taken over all $(b_j)_{j\in I_{i, 1}}$ such that \eqref{sum-bj-estimate} holds.
By Lemma \ref{numbers-estimate}, we have $r_j\le c_{\epsilon} b_j^{\epsilon}$ and hence by \eqref{I-1-i-estimate}, we have
\[\prod\limits_{j\in I_{i, 1}} r_jb_j^{-2s}\le c_{\epsilon}^{\frac{n_{i+1}-n_i}{2}}\prod\limits_{j\in I_{i, 1}}b_j^{-2s+\epsilon}.\]
Let
\[\mathcal{T}_i(m)=\left\{(b_j)_{j\in I_{i, 1}}:\sum\limits_{j\in I_{i, 1}}b_j=m\right\}\] and
\[D_i=\left\{m\in \mathbb{Z}:\frac{c_1}{2}e^{\psi(n_{i+1})} \le m\le c_2e^{\psi(n_{i+1})}\right\}.\]
Then
$$\begin{aligned}
\Lambda_i(s)&\le c_s^2c_{\epsilon}^{\frac{n_{i+1}-n_i}{2}}\sum\limits_{m\in D_i}~~
\sum\limits_{(b_j)_{j\in I_{i, 1}}\in \mathcal{T}_i(m)}~~\prod\limits_{j\in I_{i, 1}}b_j^{-2s+\epsilon}\\
&\le c_s^2c_{\epsilon}^{\frac{n_{i+1}-n_i}{2}}\sum\limits_{m\in D_i}\left(\frac{9}{2}(2+\zeta(2s-\epsilon))\right)^{\frac{n_{i+1}-n_i}{2}}m^{-2s+\epsilon},
\end{aligned}$$
where we have used Lemma \ref{upper-estimate-product-sum} and \eqref{I-1-i-estimate} in the second inequality.
Since $\# D_i\le c_3 e^{\psi(n_{i+1})}$ with $c_3=c_2-\frac{c_1}{2}+1$ and $\min\limits_{m\in D_i}\ge \frac{c_1}{2}e^{\psi(n_{i+1})}$, we have
$$\begin{aligned}
\Lambda_i(s)&\le c_s^2c_{\epsilon}^{\frac{n_{i+1}-n_i}{2}}\left(\frac{9}{2}(2+\zeta(2s-\epsilon))\right)^{\frac{n_{i+1}-n_i}{2}}
\left(\frac{c_1}{2}e^{\psi(n_{i+1})}\right)^{-2s+\epsilon}c_3 e^{\psi(n_{i+1})}\\
&=Ce^{-(2s-\epsilon-1)\psi(n_{i+1})+\frac{n_{i+1}-n_i}{2}\log c_4},
\end{aligned}$$
where $C=c_s^2c_3(c_1/2)^{-2s+\epsilon}$
 and $c_4=c_{\epsilon}\frac{9}{2}(2+\zeta(2s-\epsilon))$ are independent of $i$. By \eqref{condition-difference1}, \eqref{condition-difference2}  and \eqref{def-of-M}, we have
 $$(2s-\epsilon-1)\psi(n_{i+1})\ge(n_{i+1}-n_i)\log c_4$$ for any $i\ge K$ and hence
 \[\Lambda_i(s)\le C e^{-\frac{2s-\epsilon-1}{2}\psi(n_{i+1})}.\]
For other cases, we can similarly get the above estimates for $\Lambda_i(s)$. Thus
\[\mathcal{H}^s\left(I(a_1,\ldots,a_{N_K+1})\cap F_K(\phi)\right)\le \liminf\limits_{k\to\infty}\prod_{i=K}^k Ce^{-\frac{2s-\epsilon-1}{2}\psi(n_{i+1})}=0.\]
So, by the definition of Hausdorff dimension,  $\dim_{\mathcal H} I(a_1,\ldots,a_{N_K+1})\cap F_K(\phi)\le s$, which completes the proof.
\end{proof}


Finally, we are now in a position to prove Theorem 1.8 and Theorem 1.7. We will use the following well-known lemma from \cite{Luczak}.
\begin{lem}\label{Luczak-estimate}
For any $b>1$, the set
\[\{x\in(0,1): a_n(x)\ge e^{b^n}\textmd{ for infinitely many }n\}\]
has Hausdorff dimension $1/(1+b)$.
\end{lem}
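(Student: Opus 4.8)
The plan is to observe that this is exactly the classical result of {\L}uczak on the Hausdorff dimension of sets of continued fractions with superexponentially growing partial quotients, and the excerpt explicitly invokes it as a known lemma from \cite{Luczak}. Since it is quoted rather than proved, I would give only a sketch indicating how one recovers it, so that the reader sees where the exponent $1/(1+b)$ comes from. Write $F(b)=\{x\in(0,1): a_n(x)\ge e^{b^n}\text{ for i.m. }n\}$. The heuristic is that when $a_n(x)\approx e^{b^n}$, the cylinder $I(a_1,\dots,a_n)$ has length about $\prod_{k\le n}a_k^{-2}$, and the dominant contribution comes from the single large factor $a_n^{-2}\approx e^{-2b^n}$, while the preceding cylinder $I(a_1,\dots,a_{n-1})$ has roughly $e^{b^{n-1}}$ choices of each free partial quotient but its size is governed mostly by $a_{n-1}$; iterating, the ``cost'' of placing a large value at stage $n$ against the ``number of ways'' to arrange the earlier quotients balances precisely when the covering exponent equals $1/(1+b)$.

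The upper bound would proceed by a direct covering argument: for each fixed $n$, cover $F(b)$ by the cylinders $I(a_1,\dots,a_{n-1},a_n)$ with $a_n\ge e^{b^n}$ and $a_1,\dots,a_{n-1}\ge1$ arbitrary, using \eqref{estimate of length of cylinder by partial quotients} to bound $|I(a_1,\dots,a_n)|\le \prod_{k=1}^n a_k^{-2}$. For $s>1/(1+b)$ one sums $\bigl(\prod a_k^{-2}\bigr)^s$ over this family: the sum over $a_n\ge e^{b^n}$ contributes $\asymp e^{-(2s-1)b^n}$, the sums over $a_1,\dots,a_{n-1}\in\mathbb N$ each contribute a convergent constant $\zeta(2s)$ (using $s>1/2$, which is implied since $b>1$), and letting $n\to\infty$ the total tends to $0$, giving $\dim_{\mathcal H}F(b)\le s$. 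The limit inferior as $n\to\infty$ kills the series, so $\dim_{\mathcal H}F(b)\le 1/(1+b)$; a more careful bookkeeping, tracking how many of the earlier indices one is genuinely free to move, is what pins down the exponent exactly.

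For the lower bound I would build a Cantor subset in the style of Lemma \ref{Falconer's lemma}: fix a rapidly growing sequence $n_j$ and require $a_{n_j}(x)\in[e^{b^{n_j}},2e^{b^{n_j}})$ for each $j$ while allowing $1\le a_i(x)\le 2$ at all other indices, then estimate the number of offspring cylinders $m_n$ and the gap sizes $\epsilon_n$ between consecutive intervals (the gaps are controlled by appending a bounded partial quotient, as in the proof of Lemma \ref{lower-bound-1/2}). Feeding these into Lemma \ref{Falconer's lemma} and optimising gives $\dim_{\mathcal H}F(b)\ge 1/(1+b)$. The main obstacle, and the reason {\L}uczak's original argument is nontrivial, is getting the lower bound sharp: one must arrange the free (bounded) partial quotients densely enough that they contribute full ``entropy'' between the sparse large terms without letting the gap estimates $\epsilon_n$ degrade the Falconer ratio, and balancing the doubly-exponential spacing $b^{n_j}$ against the polynomial count of bounded blocks is precisely where the value $1/(1+b)$ is forced. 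Since all of this is carried out in \cite{Luczak}, I would simply cite it and move on.
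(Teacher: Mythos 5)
The paper gives no proof of this lemma: it is quoted as a known result from \cite{Luczak}, so your decision to cite {\L}uczak and move on is exactly what the paper does. Be aware, though, that the heuristics you sketch would not by themselves recover the statement --- the naive covering in which $a_1,\dots,a_{n-1}$ range freely (each contributing a factor $\zeta(2s)$) only yields the bound $1/2$, and a Cantor subset with the large values imposed only along a rapidly growing sparse sequence $n_j$ (bounded quotients elsewhere) has dimension far below $1/(1+b)$, since covering at the scales just past each $n_j$ collapses the Falconer ratio; {\L}uczak's sharp bounds come from requiring $a_n\asymp c^{b^n}$ at \emph{every} index for the lower bound and a genuinely more delicate covering for the upper bound.
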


\begin{proof}[Proof of Theorem \ref{thm-dimension-1/2}]
Take $n_k=\lfloor k^{1/\alpha}\rfloor$ for $1/2<\alpha<1$ and $n_k=2k$ for $\alpha\ge1$. If $\alpha=1/2$, take $$n_k=\lfloor \frac{k^2}{(3M)^2}\rfloor.$$ Then by Lemma \ref{upper-bound-1/2-0}, we have
\[\dim_{\mathcal H} E(\phi)\le 1/2.\]
On the other hand, if $1/2\le \alpha<1$, then we get
$\dim_{\mathcal H} E(\phi)\ge 1/2$ by Lemma \ref{lower-bound-1/2}. If $\alpha\ge 1$, then
we  get $\dim_{\mathcal H} E(\phi)\ge 1/2$ by Lemma \ref{lower-bound-1/1+alpha} with $\alpha=1$.
\end{proof}

\begin{proof}[Proof of Theorem \ref{thm-dimension-1/1+alpha}]
By Lemma \ref{lower-bound-1/1+alpha}, we have $\dim_{\mathcal H} E(\phi)\ge \frac{1}{1+\alpha}$. On the other hand, for any $1<b<\alpha$, we have
\[E(\phi)\subset \{x\in(0,1): a_n(x)\ge e^{b^n}\textmd{\textmd{ for infinitely many }}n\}.\]
Thus by Lemma \ref{Luczak-estimate}, $\dim_{\mathcal H} E(\phi)\le \frac{1}{1+b}$ for any $1<b<\alpha$. So, $\dim_{\mathcal H} E(\phi)\le \frac{1}{1+\alpha}$.
\end{proof}


\providecommand{\bysame}{\leavevmode\hbox to3em{\hrulefill}\thinspace}
\providecommand{\MR}{\relax\ifhmode\unskip\space\fi MR }
\providecommand{\MRhref}[2]{%
  \href{http://www.ams.org/mathscinet-getitem?mr=#1}{#2}
}
\providecommand{\href}[2]{#2}

 \end{document}